\documentclass[a4paper,11pt]{article}
\usepackage[margin=1.25in]{geometry}
\usepackage{bm}
\usepackage{mathrsfs}
\usepackage[plain]{algorithm}
\usepackage{algorithmic}
\usepackage{enumitem}
\usepackage{tikz}
\usepackage{booktabs}
\usepackage{amsmath,amssymb,amsthm}
\usepackage{graphicx,epstopdf}
\usepackage[font=small, textfont=it]{caption}
\usepackage[hidelinks]{hyperref}
\usepackage[textsize=footnotesize]{todonotes}
\usepackage{tabularx, multirow}
\usepackage{soul}

\floatstyle{plaintop}
\restylefloat{algorithm}
\usepackage{caption}
\usepackage{cite}

\hypersetup{
    colorlinks=true,
    linkcolor=blue,
    filecolor=blue,      
    urlcolor=blue,
    citecolor=blue,
    pdfpagemode=FullScreen,
    }

\newcommand*\samethanks[1][\value{footnote}]{\footnotemark[#1]}

\title{Fast Ewald Summation using Prolate Spheroidal Wave Functions}
\author{
    Erik Bostr\"om\thanks{Division of Mathematics and Physics, M\"alardalen University, 72220 V\"aster\aa s, Sweden (\em \{erik.bostrom, ludvig.af.klinteberg\}@mdu.se)},
    Anna-Karin Tornberg\thanks{Department of Mathematics, KTH Royal Institute of Technology, 10044 Stockholm, Sweden (\em akto@kth.se)},
    Ludvig af Klinteberg\samethanks[1]
    }
\date{} 

\graphicspath{{figures}}

\newcommand{\R}{\mathbb{R}}
\newcommand{\Z}{\mathbb{Z}}

\newcommand{\bx}{\bm{x}}
\newcommand{\bxi}{\bm{\xi}}
\newcommand{\by}{\bm{y}}
\newcommand{\bk}{\bm{k}}
\newcommand{\br}{\bm{r}}
\newcommand{\bwv}{\bm{\omega}}
\newcommand{\bomega}{\bm{\omega}}

\newcommand{\bkint}{\bm{\kappa}}

\renewcommand{\d}{\mathrm{d}}

\newcommand{\Kmax}{K_{\text{max}}}

\renewcommand{\i}{\mathrm{i}}
\newcommand{\ai}{a}
\newcommand{\aii}{b}
\renewcommand{\O}{\mathcal{O}}

\DeclareMathOperator{\erf}{erf}
\DeclareMathOperator{\erfc}{erfc}

\DeclareMathOperator{\spt}{supp}

\newtheorem{lemma}{Lemma}
\newtheorem{theorem}{Theorem}

\newtheorem{proposition}{Proposition}
\newtheorem{remark}{Remark}

\theoremstyle{remark}
\theoremstyle{definition}
\newtheorem{alg}{Algorithm}

\newtheorem{example}{Example}

\newcolumntype{C}{>{\centering\arraybackslash}X}

\begin{document}
\maketitle
\begin{abstract}
Fast Ewald summation efficiently evaluates Coulomb interactions and is widely used in molecular dynamics simulations.  It is based on a split into a short-range and a long-range part, where evaluation of the latter is accelerated using the fast Fourier transform (FFT). The accuracy and computational cost depend critically on the mollifier in the kernel split and the window function used in the spreading and interpolation steps that enable the use of the FFT.  The first prolate spheroidal wavefunction (PSWF) has optimal concentration in real and Fourier space simultaneously, and is used when defining both a mollifier and a window function.  We provide a complete description of the method and derive rigorous error estimates.  In addition, we obtain closed-form approximations of the Fourier truncation and aliasing errors, yielding explicit parameter choices for the achieved error to closely match the prescribed tolerance.  Numerical experiments confirm the analysis: PSWF-based Ewald summation achieves a given accuracy with significantly fewer Fourier modes and smaller window supports than Gaussian- and B-spline-based approaches, providing a superior alternative to existing Ewald methods for particle simulations.
\end{abstract}

\medskip

\textbf{Key words:} Ewald summation, prolate spheroidal wavefunction, nonuniform fast Fourier transform, fast algorithms, particle methods

\bigskip

\textbf{AMS subject classifications:} 31B10, 41A30, 65T50, 65E05, 65Y20

\section{Introduction}
\label{sec:intro}

Long-range interactions such as Coulomb and Stokes potentials arising in particle-based systems are central to molecular dynamics (MD), fluid dynamics, and wave propagation. A major computational challenge is that direct evaluation of such interactions scales quadratically with the number of particles. Ewald summation, introduced in 1921~\cite{ewaldBerechnungOptischerUnd1921} reduces this cost and has since become a standard tool in large-scale simulations.

In Ewald's original formulation, the three-dimensional Laplace kernel $G(r)=1/r$ ($r=|\bx|$, $\bx\in\R^3$) is split into a mollified kernel $M(r)$ and a residual kernel $R(r)$ according to
\begin{align}\label{eq:split}
    \frac{1}{r} = M(r) + R(r) := \frac{\erf(r/\sigma)}{r} + \frac{\erfc(r/\sigma)}{r},
\end{align}
where $\erf(x) = \tfrac{2}{\sqrt{\pi}}\int_0^x e^{-t^2}\,\mathrm{d}t$ and $\erfc(x) = 1-\erf(x)$ are the error and complementary error functions. The parameter $\sigma>0$ is a width-parameter of the mollification that controls the decay of both terms. In Ewald's original kernel split \eqref{eq:split}, the mollified kernel $M$ is the convolution of $G$ with a Gaussian $\tfrac{1}{\sqrt{\pi}\sigma} e^{-r^2/\sigma^2}$, while the residual is $R=G-M$. Because a Gaussian is not compactly supported, $R$ must be truncated numerically outside a cutoff radius $r_c>0$, with truncation error controlled by a numerical tolerance that is typically denoted by $\varepsilon$.

The Ewald split enables efficient evaluation of the potential $\phi\colon\R^3\to\R$ at the particle locations $\{\bx_j\}_{j=1}^n$ in a triply periodic domain $\Omega=[0,L)^3$ (cubic for clarity of presentation; the method extends directly to general periodic lattices). Using \eqref{eq:split}, it can be written as
\begin{align}
    \phi(\bx_i) & = \sum_{\bm{r}\in\Z^3}\sum_{j=1}^n {\vphantom{\sum}}' G(|\bx_i-\bx_j+L\bm{r}|)\,\rho_j,\label{eq:potential1}\\  
    \begin{split}
        & = \frac{1}{L^3}\sum_{\bk\neq \bm{0}} \hat{M}(\bk)\,\sum_{j=1}^n \rho_j e^{-\frac{2\pi}{L} i \bk\cdot(\bx_i-\bx_j)}
          - \lim_{r\to 0} M(r)\rho_i \\
        & \quad + \sum_{\bm{r}\in\Z^3}\sum_{j=1}^n {\vphantom{\sum}}' R(|\bx_i-\bx_j+L\bm{r}|)\,\rho_j,
        \qquad i=1,\ldots,n,
    \end{split}\label{eq:potential2}
\end{align}
where $\{\rho_j\}_{j=1}^n$ are particle strengths (e.g., charges), $\bm{r}\in\Z^3$ indexes the periodic images, and the prime indicates that the self-interaction ($i=j$, $\bm{r}=0$) is omitted. The Ewald split \eqref{eq:split} makes the otherwise conditionally convergent sum \eqref{eq:potential1} convergent provided that the charge-neutrality condition $\sum_{j=1}^n\rho_j = 0$ holds. In~\eqref{eq:potential2}, the mollified part $M$ is evaluated in Fourier space, where $\hat{M}(\bk)$ decays rapidly, while the residual $R$ captures local interactions within a cutoff radius $r_c$. This reduces the computational cost of evaluating $\phi$ from $\mathcal{O}(n^2)$ to $\mathcal{O}(n^{3/2})$ for close to randomly uniform particle distributions~\cite{hockneyComputerSimulationUsing2021}.

Several fast Ewald variants accelerate the Fourier-space sum further to $\mathcal{O}(n\log n)$ using the fast Fourier transform (FFT) \cite{cooleyAlgorithmMachineCalculation1965}. These include the Particle Mesh Ewald (PME)~\cite{dardenParticleMeshEwald1993}, Smooth Particle Mesh Ewald (SPME)~\cite{essmannSmoothParticleMesh1995}, Particle--Particle--Particle--Mesh Ewald (PPPM or P$^3$M)~\cite{hockneyComputerSimulationUsing2021}, the Spectral Ewald (SE)~\cite{lindboSpectralAccuracyFast2011,shamshirgarFastEwaldSummation2021,afklintebergFastEwaldSummation2017}, and the Particle--Particle NFFT Ewald (P$^2$NFFT) method~\cite{nestlerFastEwaldSummation2015,hofmannNFFTBasedEwald2017a}. These schemes spread particle data onto a uniform grid using a localized window function, apply the FFT, scale in Fourier space, and interpolate back. The accuracy and efficiency therefore depend on two distinct choices of functions:  
(i) the mollifier used in the kernel split, and  
(ii) the window function used in spreading and interpolation.  
Both influence the spectral decay and thus the number of Fourier modes and grid points required for a target accuracy. Classical methods differ mainly in these choices: PME, SPME, and P$^3$M employ low-order B-splines as window functions, while SE and P$^2$NFFT use Gaussians or various types of approximations to the first prolate spheroidal wavefunction of order zero (PSWF).

The PSWF, studied extensively by Slepian and collaborators~\cite{slepianProlateSpheroidalWave1961,slepianCommentsFourierAnalysis1983} and in \cite{osipovProlateSpheroidalWave2013,osipovEvaluationProlateSpheroidal2014}, is optimally concentrated in both real and Fourier space. This makes it attractive both as a mollifier in the kernel split and as a window function for spreading/interpolation, though in principle different PSWFs with different parameters may be used for the two roles. Historically, the absence of a closed form limited practical use, and approximations such as the Kaiser--Bessel (KB) and exponential of semicircle (ES) functions were employed instead~\cite{barnettParallelNonuniformFast2019a,keiner2009using}, including in SE and P$^2$NFFT implementations~\cite{shamshirgarFastEwaldSummation2021,nestlerFastEwaldSummation2015}. With modern evaluation algorithms~\cite{osipovEvaluationProlateSpheroidal2014} and polynomial approximations~\cite{barnettParallelNonuniformFast2019a}, PSWFs have become computationally practical.

Until recently, PSWF analogues, such as the KB and ES functions, have primarily been considered as window functions in fast Ewald summation. However, employing them also as mollifiers in the kernel split yields further advantages, reducing the number of required Fourier modes by around a factor of two in each dimension compared to Gaussians, as illustrated in Figure~\ref{fig:PSWFvsGaussian}.
PSWF-based kernel splitting was first introduced for the Laplace and Yukawa kernels in the Dual-space Multilevel Kernel-splitting framework (DMK)~\cite{jiangDualspaceMultilevelKernelsplitting}, and was later extended to kernels arising in Stokes flow~\cite{klintebergFastSummationStokes2025}.
\begin{figure}[t!]
\centering
\includegraphics[width=0.95\textwidth]{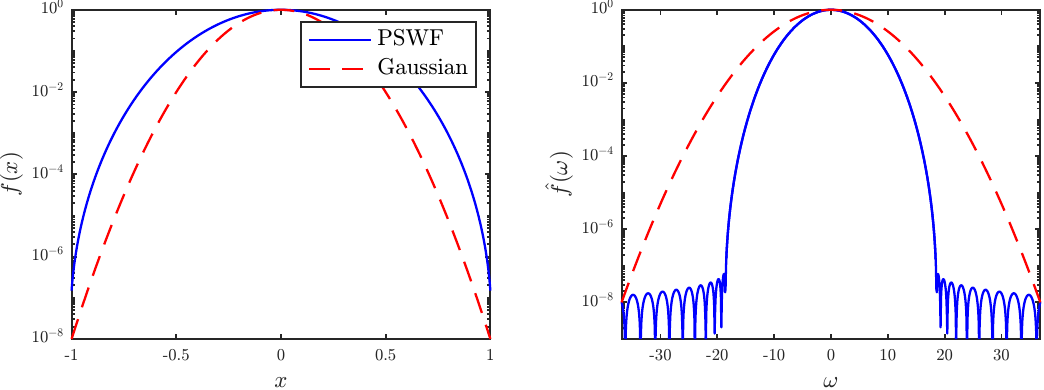}
\caption{Comparison of the Gaussian and the first PSWF of order zero in real space (left) 
and Fourier space (right). For equal effective support, the PSWF mollifier requires roughly 
half as many Fourier modes to achieve the same accuracy.}
\label{fig:PSWFvsGaussian}
\end{figure}
Fast Ewald summation based on this type of kernel splitting 
was developed independently in the present work and by Liang et al.~\cite{liangAcceleratingFastEwald2025}. In their work, a PSWF-based fast Ewald summation method for Coulomb interactions in molecular dynamics is introduced, in which PSWFs are used both for the kernel split and for spreading/interpolation, and the resulting method is integrated into the MD codes GROMACS~\cite{vanderspoelGROMACSFastFlexible2005}
and LAMMPS~\cite{LAMMPS}. Their benchmarks show substantial practical speedups, with total computation times reduced by up to a factor of three when running large-scale simulations with thousands of cores.

The present work is complementary to \cite{liangAcceleratingFastEwald2025}. Our focus is on the mathematical formulation and analysis of PSWF-based Ewald summation, including:
\begin{itemize}
    \item Explicit definitions of PSWF-based mollifiers and PSWF-based window functions, integrated consistently within the Ewald method.
    \item Rigorous error estimates for both splitting and windowing, with simplified closed-form approximations for practical use.
    \item Guidelines for parameter selection for the Fourier sum, enabling predictive accuracy control without ad hoc tuning.
    \item Systematic comparisons with Gaussian- and B-spline-based methods, demonstrating that PSWF-based approaches achieve equal accuracy with fewer Fourier modes and smaller window supports.
\end{itemize}
The resulting method is optimal with respect to computational complexity, due to the minimal support of the PSWF. While our analysis focuses on the triply periodic electrostatic potential, it extends naturally to other periodicities and kernels, including Stokes flow~\cite{baggeFastEwaldSummation2023,afklintebergFastEwaldSummation2017}.


The remainder of the paper is organized as follows. Section~\ref{sec:matprel} introduces mathematical preliminaries. Section~\ref{sec:fastewald} reviews the direct and fast Ewald summation algorithms. Section~\ref{sec:fast_Ewald_PSWF} presents the PSWF-based method. Section~\ref{sec:error_analysis} provides rigorous error analysis. Section~\ref{sec:parameter_selection} discusses parameter selection. Section~\ref{sec:num_results} reports numerical experiments, and Section~\ref{sec:conclusions} concludes the paper.

\section{Mathematical preliminaries}
\label{sec:matprel}
This section introduces notation and the fundamental concepts used throughout the paper.

\subsection{Basic definitions and notation}
\label{sec:def}

We consider the periodic box $\Omega=\R^3/(L\Z^3)\simeq[0,L)^3$ with side length $L$ and volume $V=L^3$. Particle locations are $\{\bx_j\}_{j=1}^n\subset\Omega$ with associated strengths $\{\rho_j\}_{j=1}^n$. We write $r=|\bx|$. 

Fourier-space variables are $\bk\in\Z^3$ and $\bomega:=(2\pi/L)\bk\in\R^3$ with $\omega=|\bomega|$. We use the Fourier transform convention $\hat f(\bomega)=\int_{\R^3} f(\bx)\,e^{-\i\bomega\cdot\bx}\,d\bx$. When convenient, we write $\hat f(\bk):=\hat f\!\left((2\pi/L)\bk\right)$. 
Fourier-series coefficients are $c_{\bk}(f)$, and discrete Fourier transform coefficients are denoted $\hat f_{\bk}\approx c_{\bk}(f)$. Radially symmetric functions are written $f(r)$ with $r=|\bx|$.

For a cubic bandwidth $\bm m=(m,m,m)$ define
\begin{align}
\mathcal{I}_m & :=
\begin{cases}
\{-m/2,\ldots,m/2-1\}, & m \text{ even},\\
\{-(m-1)/2,\ldots,(m-1)/2\}, & m \text{ odd},
\end{cases}
\qquad
\mathcal{I}_{\bm m} := \mathcal{I}_m^3,\qquad |\mathcal{I}_{\bm m}|=m^3.
\end{align}
Define $K_{\max}:=\lfloor m/2\rfloor$ and $\omega_{\max}:=(2\pi/L)K_{\max}$. 

\subsection{A fundamental lemma for Laplacian kernel splitting}

The next identity links a one-dimensional function to its three-dimensional
convolution with the Laplace kernel $G(\bx)=1/|\bx|$.

\begin{lemma}\label{lem:laplacian}
Let $f\in L^1(\R)$ be even and non-negative, with Fourier transform $\hat f$.
Define the radial function $g\colon\R^3\to\R$ by
\begin{align}
g(\bx) := \frac{2}{|\bx|}\int_0^{|\bx|} f(u)\,du,\qquad \bx\neq \bm{0},
\end{align}
with $g(\bm{0})$ defined by continuity. Then, with Fourier transform taken in $\R^3$,
\begin{align}
\hat g(\bomega) \;=\; \frac{4\pi}{|\bomega|^2}\,\hat f(|\bomega|),\qquad \bomega\neq \bm{0}.
\end{align}
\end{lemma}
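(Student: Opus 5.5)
The plan is to show that $g$ is, up to a constant, the Newtonian potential of a radial density $F$ on $\R^3$ whose three-dimensional Fourier transform equals $\hat f(|\bomega|)$. The result then follows from $\widehat{1/|\bx|}=4\pi/|\bomega|^2$ and the convolution theorem.

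\textbf{Step 1: the density.} First I would identify the radial function $F$ whose 3D transform is $\hat f(|\bomega|)$. For a radial function, the 3D transform is
\begin{align*}
\hat F(\omega)=\frac{4\pi}{\omega}\int_0^\infty F(r)\,r\sin(\omega r)\,dr .
\end{align*}
Since $f$ is even,
\begin{align*}
\hat f(\omega)=2\int_0^\infty f(u)\cos(\omega u)\,du .
\end{align*}
Integrating by parts in the cosine integral, or equivalently using the standard lifting $F(r)=-\frac{1}{2\pi r}f'(r)$, gives $\hat F=\hat f(|\cdot|)$. To avoid assuming that $f$ is differentiable, I would not use $F$ explicitly. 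Instead I would verify the claim distributionally, as in Step 2.

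\textbf{Step 2: direct computation (the route I would actually take).} I would compute $\hat g$ directly in the radial form, using $|\bomega|^2\hat g=\widehat{-\Delta g}$ in the sense of tempered distributions. For radial $g$ we have $-\Delta g=-\frac{1}{r}(rg)''$. Here
\begin{align*}
rg(r)=2\int_0^r f(u)\,du,
\end{align*}
so $(rg)'=2f(r)$. Hence, as a distribution, $-\Delta g=-\frac{2f'(r)}{r}$. The cleaner alternative is to pair with a radial test function and integrate by parts once.

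The key step is then to check that the transform of this distribution is $4\pi\hat f(\omega)$:
\begin{align*}
\frac{4\pi}{\omega}\int_0^\infty\Big(-\frac{2f'(r)}{4\pi r}\cdot 4\pi\Big)\frac{r\sin\omega r}{4\pi}\,dr
=-\frac{2}{\omega}\int_0^\infty f'(r)\sin(\omega r)\,dr
=2\int_0^\infty f(r)\cos(\omega r)\,dr=\hat f(\omega).
\end{align*}
The boundary terms vanish because $\sin 0=0$ and $f\in L^1$, with a limiting argument at infinity. Tracking the constants carefully, this yields $|\bomega|^2\hat g(\bomega)=4\pi\hat f(|\bomega|)$. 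Dividing by $|\bomega|^2$ gives the claim for $\bomega\neq\bm 0$.

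\textbf{Main obstacle.} The main difficulty is rigor at low regularity and slow decay. The function $g$ decays like $\|f\|_{L^1}/r$, so it is not integrable and $\hat g$ exists only as a tempered distribution, equal to a function away from the origin. Likewise $f'$ may only be a distribution.

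I would handle this by a regularization argument. Multiply by $e^{-\epsilon r}$, or approximate $f$ by smooth compactly supported even functions $f_n\to f$ in $L^1$. The corresponding $g_n$ converge to $g$ in $\mathcal S'$, since $|g_n-g|\le 2\|f_n-f\|_{L^1}/r$ and $1/r$ is locally integrable with polynomial growth. Also $\hat f_n\to\hat f$ uniformly. For smooth compactly supported $f_n$, the calculation in Step 2 is fully classical: $g_n-c_n/r$ is smooth and rapidly decaying, where $c_n=\int_{\R} f_n$. Passing to the limit then gives the identity on $\bomega\neq\bm 0$.

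Non-negativity of $f$ is not needed for this identity.
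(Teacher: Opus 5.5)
Your proof is correct, but it takes a different route from the proof the paper points to. That proof (cited from Appendix A.3 of the DMK paper: spherical coordinates plus integration by parts) computes $\hat g$ directly. It writes $\hat g(\bomega)=\frac{4\pi}{\omega}\int_0^\infty rg(r)\sin(\omega r)\,dr$ with $rg(r)=2\int_0^r f$. A single integration by parts then moves the derivative onto $2\int_0^r f$ and gives $\frac{4\pi}{\omega^2}\,2\int_0^\infty f(r)\cos(\omega r)\,dr=\frac{4\pi}{\omega^2}\hat f(\omega)$.

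No derivative of $f$ ever appears in that route, so only $f\in L^1$ is used. Its one delicate point is the boundary term at $r\to\infty$, proportional to $\bigl(2\int_0^r f\bigr)\cos(\omega r)$. This term oscillates with amplitude tending to $\int_{\R}f$ rather than vanishing, so it must be discarded in an Abel or distributional sense; this reflects that $g\notin L^1(\R^3)$.

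You instead apply $-\Delta$ first. That costs a derivative on $f$, which matters here: the PSWF mollifier jumps at $\pm r_c$ since $\psi_0^{c_s}(1)>0$, so $f'$ has point masses. This forces your smoothing step, and you then undo the derivative with the same sine-transform integration by parts.

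In exchange, your regularization handles the non-integrability of $g$ and the low regularity of $f$ rigorously. Your Step 1 also exhibits $g=G*F$ with the explicit radial density $F(r)=-f'(r)/(2\pi r)$. This is the correct three-dimensional object behind the paper's radial-extension remark. It is not $f(|\bx|)$ in general: for the Gaussian $f$ it is $\pi^{-3/2}\sigma^{-3}e^{-r^2/\sigma^2}$, consistent with the caveat in Appendix~\ref{app:radial-extension}.

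Two small slips to fix:
\begin{itemize}
\item Your display inserts stray factors of $4\pi$, so as written it evaluates $\frac{1}{4\pi}\widehat{(-\Delta g)}$. The clean computation is $\frac{4\pi}{\omega}\int_0^\infty\bigl(-2f'(r)\bigr)\sin(\omega r)\,dr=4\pi\hat f(\omega)$.
\item The function $g_n-c_n/r$ is not smooth: it is compactly supported but behaves like $-c_n/r$ at the origin. What you actually need is that $g_n$ itself is smooth, since $\frac1r\int_0^r f_n$ is an even smooth function of $r$, and that $-\Delta g_n=-2f_n'(r)/r\in C_c^\infty(\R^3)$. Then $|\bomega|^2\hat g_n=4\pi\hat f_n(|\bomega|)$ holds in $\mathcal S'(\R^3)$ with no boundary terms. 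Your limit argument gives $|\bomega|^2\hat g=4\pi\hat f(|\bomega|)$ in $\mathcal S'$, which determines $\hat g$ exactly on $\bomega\neq\bm 0$, as the lemma requires.
\end{itemize}
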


\begin{remark}[Radial extension]
Let $\eta\colon\R^3\to\R$ be the radial extension of $f$ from Lemma~\ref{lem:laplacian}, defined by $\eta(\bx):=f(|\bx|)$. Then $\hat\eta(\bomega)=\hat f(|\bomega|)$, and thus $\hat g(\bomega)=\hat G(\bomega)\,\hat\eta(\bomega)$ with $\hat G(\bomega)=4\pi/|\bomega|^2$, i.e.\ $g=G*\eta$. See Appendix~\ref{app:radial-extension} for details.
\end{remark}

A proof of Lemma \ref{lem:laplacian} using spherical coordinates and integration by parts is given in Appendix A.3 of \cite{jiangDualspaceMultilevelKernelsplitting}.

\subsection{Numerical mollifiers for the Laplacian kernel}
\label{sec:num_mollifiers}

We call an even and non-negative function $\gamma_\sigma \in L^1(\R)$ with width 
parameter $\sigma>0$ a \emph{numerical mollifier} (or simply a \emph{mollifier}) if
\begin{align}
    \int_{\R} \gamma_{\sigma}(x)\,dx = 1,
\end{align}
and
\begin{align}
    |\gamma_\sigma(x)| \le \delta \qquad \text{for all } |x| \ge \sigma,
\end{align}
where $\delta>0$ is a prescribed tolerance controlling the tail of $\gamma_\sigma$.

By Lemma~\ref{lem:laplacian}, the mollified Laplace kernel and its Fourier transform are
\begin{align}\label{eq:mollified_kernel0}
    M(\bx) := (G \ast \eta_{\sigma})(\bx) 
    = \frac{2\int_0^{|\bx|} \gamma_{\sigma}(u)\,du}{|\bx|},
\end{align}
\begin{align}\label{eq:mollified_kernel}    
    \hat{M}(\bomega) = \hat{G}(\bomega)\,\hat{\eta}_{\sigma}(\bomega) 
    = \frac{4\pi}{|\bomega|^2} \, \hat{\gamma}_{\sigma}(|\bomega|).
\end{align}
Here $\eta_{\sigma}$ is the radial extension of $\gamma_{\sigma}$. In Ewald summation, 
\(M\) is the \emph{mollified kernel} and \(R := G - M\) the \emph{residual kernel}, with 
$\sigma$ chosen so that $|R(r_c)| \lesssim \varepsilon$ at the cutoff radius $r_c$.

\begin{remark}
In previous work, e.g.\ \cite{lindboSpectralAccuracyFast2011}, the residual kernel $R$ is not
compactly supported, and the parameters $\sigma$ and $k_{\max}$ are chosen so that the
resulting error matches approximate error models, rather than by imposing a condition
such as $|R(r_c)| \lesssim \varepsilon$. The condition $|R(r_c)| \lesssim \varepsilon$ nevertheless
provides a sufficient starting point for subsequent fine-tuning of the parameters; see
also \cite{liangAcceleratingFastEwald2025}.
 Due to the construction of the mollifier, we can
instead achieve $R(r_c)=0$ (as shown in Section~\ref{sec:fast_Ewald_PSWF}), which
eliminates truncation errors in the residual kernel and makes the parameter selection
process easier and entirely determined by the Fourier-space sum.
\end{remark}

\begin{example}[Gaussian mollifier]\label{ex:Gaussian_split}
The 1D Gaussian
\begin{align}
    g(x) = \frac{1}{\sqrt{\pi}}\,e^{-x^2},
    \qquad 
    \hat g(\omega) = e^{-\omega^2/4},
\end{align}
scaled by a width parameter $\sigma > 0$,
\begin{align}
  g_\sigma(x) = \frac{1}{\sqrt{\pi}\sigma}\,e^{-x^2/\sigma^2},
  \qquad
  \hat g_\sigma(\omega) = e^{-\sigma^2\omega^2/4},
\end{align}
is a mollifier in the sense of the definition above.
It yields the classical Gaussian Ewald split
\begin{align}
    M(\bx) = \frac{\erf(|\bx|/\sigma)}{|\bx|}, \qquad 
    \hat{M}(\bomega) = \frac{4\pi}{|\bomega|^2}\,e^{-\sigma^2 |\bomega|^2/4},
\end{align}
with residual
\begin{align}
  R(|\bx|) = \frac{\erfc(|\bx|/\sigma)}{|\bx|}.
\end{align}
Since $\erfc(|\bx|/\sigma)$ decays exponentially, the residual kernel becomes smaller than $\varepsilon$ for $|\bx| \gtrsim r_c$, provided that
\begin{align}
  (r_c/\sigma)^2 \gtrsim \log(1/\varepsilon).
\end{align}
\end{example}

\subsection{Window functions, periodization and Fourier coefficients}
\label{sec:window_and_periodization}

We define a \emph{window function} as a real and even function $\varphi \in L^2(\R^3)$ for which both $\varphi$ and $\hat\varphi$ are strongly localized near zero, enabling compact truncation in real space and rapid spectral decay.

If a window function without compact support is used, it can be truncated. We define its truncation by
\begin{align}
    \varphi_t(\bx) := 
    \begin{cases}
        \varphi(\bx), & \bx \in [-\alpha,\alpha]^3,\\
        0, & \bx \notin [-\alpha,\alpha]^3.
    \end{cases}
\end{align}

On the periodic domain $\Omega$, a periodized window function $\tilde\varphi$ is employed. More generally, for any function $f:\R^3 \to \R$ we define the periodization $\tilde f$ on $\Omega$ by
\begin{align}
    \tilde f(\bx) := \sum_{\br \in \Z^3} f(\bx + L\br),
\end{align}
which admits the Fourier series
\begin{align}
    \tilde f(\bx)
    = \sum_{\bk \in \Z^3} c_{\bk}(\tilde f)\, 
      e^{\frac{2\pi}{L}\, \i\, \bk \cdot \bx},
\end{align}
with coefficients (see Appendix~\ref{sec:fcoeffs} for the derivation)
\begin{align}
    c_{\bk}(\tilde f)
    = \frac{1}{V}\int_{\Omega} 
        \tilde f(\bx)\, e^{-\frac{2\pi}{L}\, \i\, \bk \cdot \bx}\, d\bx
    \;=\; \frac{1}{V}\, \hat f\!\left(\tfrac{2\pi}{L}\bk\right).
    \label{eq:fcoeffs}
\end{align}
Hence
\begin{align}
    \tilde f(\bx)
    = \frac{1}{V}\sum_{\bk \in \Z^3}
      \hat f\!\left(\tfrac{2\pi}{L}\bk\right)\,
      e^{\frac{2\pi}{L}\, \i\, \bk \cdot \bx}.
\end{align}

\begin{remark}
We distinguish Fourier series coefficients from discrete Fourier transform (DFT) coefficients.
For $f\in L^2(\Omega)$ sampled on an $m^3$ uniform grid, the DFT produces $\hat f_{\bk}$ for $\bk\in\mathcal I_{\bm m}$, related to $c_{\bk}(f)$ by the aliasing identity \cite{plonkaNumericalFourierAnalysis2018}
\begin{align}\label{eq:alias}
    \hat f_{\bk} \;=\; \sum_{\br\in\Z^3} c_{\bk + m\br}(f).
\end{align}
\end{remark}

\subsection{Prolate Spheroidal Wave Functions}

The zeroth order\footnote{Here ``order'' refers to the angular (azimuthal) index in the
prolate spheroidal wave functions; order zero corresponds to the
axisymmetric family.} \emph{prolate spheroidal wave functions} (PSWFs), $\{\psi_j^c\}_{j=0}^\infty$, arise as eigenfunctions of the integral operator
\begin{align}\label{eq:operator}
    F_c[f](s) := \int_{-1}^1 f(t)\, e^{\i c s t}\,dt, \qquad s\in[-1,1],
\end{align}
where $c>0$ is the \emph{bandlimit}. They satisfy
\begin{align}\label{eq:eigenvalueproblem}
    \lambda_j \psi_j^c(s) = \int_{-1}^1 \psi_j^c(t)\, e^{\i c s t}\,dt,\qquad s\in\R,
\end{align}
with eigenvalues ordered by decreasing magnitude
\begin{align}
\sqrt{\frac{2\pi}{c}} > |\lambda_0(c)| > |\lambda_1(c)| > \cdots > 0.
\end{align}
The eigenfunctions $\psi_j^c$ are taken real and orthonormal in $L^2([-1,1])$, 
so that $\|\psi_j^c\|_{L^2([-1,1])}=1$. They alternate parity with $j$ (even for even $j$, odd for odd $j$) and are 
real–analytic on $(-1,1)$; in particular $\psi_j^c\in C^\infty([-1,1])$ 
\cite{osipovProlateSpheroidalWave2013}. 
For definiteness, we fix the sign of $\psi_0^c$ by requiring 
$\psi_0^c(0)>0$.

Among bandlimited functions, the first PSWF $\psi_0^c$ ($j=0$) maximizes spectral concentration:

\begin{theorem}[see {\cite[Theorem~3.53]{osipovProlateSpheroidalWave2013}}]\label{thm:bandlimited}
Let $f\in L^2([-1,1])$ with $\|f\|_{L^2([-1,1])}=1$ and $c>0$. Define $g$ by
\begin{align}\label{eq:bandlimited}
    g(s) = \int_{-1}^1 f(t)\, e^{\i c s t}\,dt, \qquad s\in\R.
\end{align}
Then
\begin{align}
    \|g\|^2_{L^2(\R)} = \frac{2\pi}{c}, 
    \qquad 
    \|g\|^2_{L^2([-1,1])} \le |\lambda_0(c)|^2,
\end{align}
with equality for $f=\psi_0^c$.
\end{theorem}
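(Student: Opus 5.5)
The plan is to prove the two claims separately: the identity for $\|g\|^2_{L^2(\R)}$ by Plancherel, and the concentration bound on $[-1,1]$ by a spectral argument for the self-adjoint operator $F_c^*F_c$ restricted to $L^2([-1,1])$.

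\emph{Norm over $\R$.} Extend $f$ by zero outside $[-1,1]$. Then $g(s)=\int_\R f(t)e^{\i cst}\,dt=\hat f(-cs)$ with the Fourier convention of Section~\ref{sec:def}. Plancherel gives $\int_\R|\hat f(\omega)|^2\,d\omega=2\pi\|f\|_{L^2}^2=2\pi$. The substitution $\omega=-cs$ then yields $\|g\|^2_{L^2(\R)}=\frac{1}{c}\cdot 2\pi=\frac{2\pi}{c}$.

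\emph{Norm over $[-1,1]$.} Here $\|g\|^2_{L^2([-1,1])}=\|F_c f\|^2=\langle F_c^*F_c f,f\rangle$, where $F_c$ is the operator \eqref{eq:operator} viewed as a map $L^2([-1,1])\to L^2([-1,1])$. Its kernel $e^{\i cst}$ is continuous and bounded on $[-1,1]^2$, so $F_c$ is Hilbert--Schmidt, hence compact. Because the kernel is symmetric in $s,t$, we have $F_c^*=\overline{F_c}$.

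The key step is that $F_c$ is \emph{normal}. To see this, compute $F_c^*F_c$ explicitly: its kernel is $\int_{-1}^1 e^{\i cs(t-u)}\,ds=2\sin(c(t-u))/(c(t-u))$. This depends only on $t-u$ and is real and symmetric, so $F_cF_c^*$ has the same kernel and $F_c^*F_c=F_cF_c^*$. By the spectral theorem for compact normal operators, $F_c$ has an orthonormal eigenbasis of $L^2([-1,1])$. We take this basis to be the PSWFs $\{\psi_j^c\}$ from \eqref{eq:eigenvalueproblem}. Their completeness is the one point I would cite from \cite{osipovProlateSpheroidalWave2013} rather than reprove. Alternatively, one can run the whole argument with whatever eigenbasis the spectral theorem supplies, noting that $|\lambda_0|$ is by definition the largest eigenvalue modulus, which equals $\|F_c\|$ for a normal operator.

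Expand $f=\sum_j a_j\psi_j^c$ with $\sum_j|a_j|^2=1$. Then $F_cf=\sum_j a_j\lambda_j\psi_j^c$, and by orthonormality
\begin{align*}
\|g\|^2_{L^2([-1,1])}=\sum_j|a_j|^2|\lambda_j|^2\le|\lambda_0(c)|^2 .
\end{align*}
Equality holds when $a_0=1$, that is, for $f=\psi_0^c$, since then $g=\lambda_0\psi_0^c$ by \eqref{eq:eigenvalueproblem} and $\|\psi_0^c\|=1$.

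The main obstacle is making the spectral step rigorous: establishing normality (done above via the sinc kernel) and identifying the top eigenvalue with $\lambda_0$ together with the completeness of the PSWF system, which is where the reference is needed. The strict bound $|\lambda_0|<\sqrt{2\pi/c}$ is not required for this statement. It does follow consistently from the first part, since $\|g\|_{L^2([-1,1])}\le\|g\|_{L^2(\R)}$.
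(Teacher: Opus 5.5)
Your proof is correct. The paper gives no argument of its own here and defers to Osipov--Rokhlin--Xiao, and what you wrote is the standard proof behind that citation: Plancherel with the substitution $\omega=-cs$ gives the full-line norm, and expanding $f$ in the complete orthonormal PSWF system with $F_c\psi_j^c=\lambda_j\psi_j^c$ gives the concentration bound. Your normality argument via the sinc kernel also lets you bypass completeness: it yields $\|F_c\|=|\lambda_0|$ once $\lambda_0$ is taken to be the eigenvalue of $F_c$ of largest modulus.

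One small point outside the statement: your closing remark only gives $|\lambda_0(c)|^2\le 2\pi/c$. Strictness needs one more observation, namely that $g=\lambda_0\psi_0^c$ is a nonzero entire function and so cannot vanish almost everywhere on $\R\setminus[-1,1]$.
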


Thus, in the present work we only need $\psi_0^c$, which is even has no zeros in $(-1,1)$ and, 
with the normalization $\psi_0^c(0)>0$, satisfies $\psi_0^c(s)>0$ for all 
$s\in(-1,1)$; in particular, $\psi_0^c(1)>0$ \cite{osipovProlateSpheroidalWave2013}.

The Fourier transform of the truncation of $\psi_0^c$ to the interval $[-1,1]$ satisfies a simple relation:

\begin{lemma}\label{lem:PSWFrelation}
Let $f:\R\to\R$ be defined by
\begin{align}
f(s)=
\begin{cases}
\psi_0^{c}(s), & s\in[-1,1],\\
0, & \text{otherwise}.
\end{cases}
\end{align}
Then
\begin{align}
\hat f(\omega)=\lambda_0\,\psi_0^{c} (\omega/c),
\qquad \omega\in\mathbb{R}.
\end{align}
In particular,
\begin{align}
\hat f(\omega)=\lambda_0\,f(\omega/c),\qquad \omega\in[-c,c].
\end{align}
\end{lemma}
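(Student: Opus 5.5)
The proof is essentially a direct computation: write out $\hat f$ with the paper's convention, use evenness of $\psi_0^c$ to flip the sign of the exponent, and then read off the eigenvalue relation \eqref{eq:eigenvalueproblem} at $j=0$. Since $f$ vanishes outside $[-1,1]$, with the convention $\hat f(\omega)=\int_{\R} f(s)e^{-\i\omega s}\,ds$ we have
\begin{align*}
\hat f(\omega)=\int_{-1}^1 \psi_0^c(t)\,e^{-\i\omega t}\,dt .
\end{align*}
Because $\psi_0^c$ is even (it has even index), the substitution $t\mapsto -t$ shows this equals $\int_{-1}^1 \psi_0^c(t)e^{\i\omega t}\,dt$. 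Hence the sign convention of the Fourier transform does not matter.

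Next we rescale. Put $s=\omega/c$, so that $e^{\i\omega t}=e^{\i c s t}$. The eigenvalue relation \eqref{eq:eigenvalueproblem} with $j=0$ is stated for all $s\in\R$, not just $s\in[-1,1]$. It gives
\begin{align*}
\int_{-1}^1 \psi_0^c(t)e^{\i c s t}\,dt=\lambda_0\psi_0^c(s), \qquad s\in\R .
\end{align*}
Here $\psi_0^c$ on the right-hand side is understood as its extension to $\R$ defined by this very integral, which is how the paper's statement "$s\in\R$" is to be read. Therefore $\hat f(\omega)=\lambda_0\psi_0^c(\omega/c)$ for every $\omega\in\R$.

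The special case follows immediately. For $\omega\in[-c,c]$ we have $\omega/c\in[-1,1]$, where $f(\omega/c)=\psi_0^c(\omega/c)$, so $\hat f(\omega)=\lambda_0 f(\omega/c)$.

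The only step needing any care is the extension of $\psi_0^c$ beyond $[-1,1]$. One must check that the eigenfunction, originally defined on $[-1,1]$ as an eigenfunction of $F_c$ in \eqref{eq:operator}, extends consistently to $\R$ via the right-hand side of \eqref{eq:eigenvalueproblem} divided by $\lambda_0$. This uses $\lambda_0\neq 0$, which holds since $|\lambda_0|>|\lambda_1|>0$. The extension is an entire function of $s$ that agrees with $\psi_0^c$ on $[-1,1]$, and that is precisely the meaning of $\psi_0^c(\omega/c)$ in the first formula.

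It is also worth noting that $\lambda_0$ is real. It equals $\int_{-1}^1\psi_0^c(t)\cos(cst)\,dt/\psi_0^c(s)$ at $s=0$, which is real because $\psi_0^c$ is real and even. So $\hat f$ is real-valued, consistent with $f$ being real and even.
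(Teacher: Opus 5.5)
Your proof is correct and follows the paper's argument, which simply sets $\omega = cs$ in the eigenvalue relation \eqref{eq:eigenvalueproblem}. Your use of the evenness of $\psi_0^c$ to reconcile the paper's $e^{-\i\omega t}$ Fourier convention with the $e^{\i c s t}$ kernel, and your note that $\lambda_0\neq 0$ for the extension beyond $[-1,1]$, fill in details the paper leaves implicit.
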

\begin{proof}
The Lemma follows directly from \eqref{eq:eigenvalueproblem} letting $\omega=cs$.
\end{proof}

For computation, $\psi_0^c$ in $[-1,1]$ can be obtained from its Legendre expansion and a tridiagonal eigenproblem derived from the Sturm--Liouville equation \cite{osipovEvaluationProlateSpheroidal2014}
\begin{align}\label{eq:ode}
    (1 - s^2) \frac{\mathrm{d}^2 \psi_0^c}{\mathrm{d}s^2} - 2s\frac{\d\psi_0^c}{\d s} + (\chi_0 - c^2 s^2)\psi_0^c = 0,
\end{align}
where $\chi_0>0$ is the differential-equation eigenvalue associated with $\psi^c_0$. Details of the computational problem are given in Appendix~\ref{app:pswf}.

The differential equation \eqref{eq:ode} is also useful for establishing several properties of the PSWFs. In order to do so, the following relation between the first eigenvalue $\chi_0$ and the bandlimit $c$ is useful. 

\begin{lemma}\label{lem:chi0}
Let $c>0$. Then the smallest eigenvalue associated with \eqref{eq:ode} satisfies
\begin{align}\label{eq:ineq1}
\chi_0 \le \frac{c^2}{3}.
\end{align}
\end{lemma}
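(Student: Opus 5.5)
The bound will follow from the Rayleigh quotient (min–max) characterization of the smallest eigenvalue of the Sturm--Liouville problem \eqref{eq:ode}, tested against the constant function. First I rewrite \eqref{eq:ode} in self-adjoint form,
\begin{align}
    -\frac{\d}{\d s}\Bigl((1-s^2)\frac{\d \psi}{\d s}\Bigr) + c^2 s^2\,\psi = \chi\,\psi, \qquad s\in(-1,1),
\end{align}
where the admissible eigenfunctions are those bounded at $s=\pm1$ (equivalently, smooth on $[-1,1]$, as stated for $\psi_j^c$). The operator $\mathcal{L}\psi := -((1-s^2)\psi')' + c^2s^2\psi$ is symmetric on $C^\infty([-1,1])$ in $L^2([-1,1])$. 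The boundary term from integrating by parts, $\bigl[(1-s^2)\psi'\phi\bigr]_{-1}^{1}$, vanishes because $1-s^2$ vanishes at the endpoints.

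The operator is also non-negative, with quadratic form
\begin{align}
    Q[\phi] = \int_{-1}^1 \bigl((1-s^2)|\phi'(s)|^2 + c^2 s^2|\phi(s)|^2\bigr)\,ds .
\end{align}
The eigenfunctions $\{\psi_j^c\}$ form a complete orthonormal basis of $L^2([-1,1])$, by the standard theory for this singular Sturm--Liouville problem (Legendre-type, see \cite{osipovProlateSpheroidalWave2013}). Their eigenvalues satisfy $\chi_0<\chi_1<\cdots$. Hence, for any smooth $\phi\neq 0$, expanding $\phi=\sum_j a_j\psi_j^c$ gives
\begin{align}
    \chi_0 \le \frac{Q[\phi]}{\|\phi\|^2_{L^2([-1,1])}} = \frac{\sum_j\chi_j|a_j|^2}{\sum_j|a_j|^2}.
\end{align}

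Now I take the trial function $\phi\equiv 1$. Then $\phi'=0$, so $Q[1] = c^2\int_{-1}^1 s^2\,ds = 2c^2/3$, while $\|1\|^2 = 2$. This gives $\chi_0\le c^2/3$, which is \eqref{eq:ineq1}. Strict inequality in fact holds, since $1$ is not an eigenfunction when $c>0$: $\mathcal{L}1 = c^2s^2$ is not a multiple of $1$.

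The main obstacle is justifying that the eigenvalue $\chi_0$ paired with $\psi_0^c$ is the smallest one, and that the expansion identity for $Q[\phi]$ holds. That identity requires $\sum_j \chi_j|a_j|^2$ to converge for smooth $\phi$. This is routine, because $a_j\chi_j = \langle \phi,\mathcal{L}\psi_j\rangle = \langle\mathcal{L}\phi,\psi_j\rangle$ gives square-summability.

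The ordering $\chi_0<\chi_1<\dots$ with $\psi_0^c$ the nodeless eigenfunction is classical Sturm oscillation theory, consistent with the stated fact that $\psi_0^c$ has no zeros in $(-1,1)$; I would cite it from \cite{osipovProlateSpheroidalWave2013}. If one prefers to avoid completeness, an alternative argument uses positivity. Multiply \eqref{eq:ode} by $1$ and integrate over $[-1,1]$. The derivative term integrates to zero, which yields
\begin{align}
    \chi_0\int_{-1}^1\psi_0^c\,ds = c^2\int_{-1}^1 s^2\psi_0^c\,ds .
\end{align}
Since $\psi_0^c>0$, this says $\chi_0/c^2$ is the $\psi_0^c$-weighted mean of $s^2$. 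That identity alone does not bound the weighted mean by $1/3$, so I would rely on the variational argument.
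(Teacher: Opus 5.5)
Your proposal is correct and follows the paper's proof: the paper likewise uses the Rayleigh-quotient characterization of $\chi_0$ with the test function $\varphi\equiv 1$ to obtain $\chi_0\le c^2/3$. Your additional justification (the self-adjoint form, completeness of the eigenfunctions, and the remark that the inequality is strict) supplies details that the paper takes as standard.
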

From \eqref{eq:ineq1}, it follows that for $s>1$, $
c^2 s^2 - \chi_0 \ge 2c^2/3 > 0$. This yields the following tail bound, which will be used in the error estimate in Proposition~\ref{prop:PSWF_split_errors} of Section \ref{sec:dirsumerrors}.
\begin{lemma}\label{lem:tailbnd}
Let $c \ge \sqrt{3}$ and let $s\ge 1$. Then
\begin{align}\label{eq:bnd}
|\psi_0^c(s)| \le \frac{\psi_0^c(1)}{s}.
\end{align}
\end{lemma}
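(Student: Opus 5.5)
The approach I would take is a Sturm--Liouville energy (Pr\"ufer-type) argument based on the differential equation \eqref{eq:ode}, continued analytically to $s\ge 1$; this is legitimate because $\psi_0^c$ is entire by \eqref{eq:eigenvalueproblem}. Write $p(s):=s^2-1$ and $q(s):=c^2s^2-\chi_0$. Then \eqref{eq:ode} becomes
\begin{align*}
\bigl(p\,\psi'\bigr)' = -q\,\psi ,\qquad s\ge 1,
\end{align*}
writing $\psi=\psi_0^c$. By Lemma~\ref{lem:chi0}, $q(s)\ge 2c^2/3>0$ on $[1,\infty)$, and $p,q$ are increasing there. The idea is to find a functional $H(s)$ with three properties: $H(1)=\psi(1)^2$, $H(s)\ge s^2\psi(s)^2$, and $H'\le 0$. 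Monotonicity of $H$ then gives the claimed inequality.

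First step. I would check the unweighted functional
\begin{align*}
Q(s):=\psi^2+\frac{p\,\psi'^2}{q}.
\end{align*}
Substituting $p\psi''=-q\psi-p'\psi'$, the cross terms cancel and one gets
\begin{align*}
Q'=-\Bigl(\frac{p'}{q}+\frac{p\,q'}{q^2}\Bigr)\psi'^2\le 0 .
\end{align*}
Since $p(1)=0$, we have $Q(1)=\psi(1)^2$, so $|\psi(s)|\le\psi(1)$ for $s\ge1$. This is the baseline amplitude bound, and it contains all the structure needed: $Q$ is the squared local amplitude of the oscillation.

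Second step. To gain the factor $1/s$, I would use the fact that the amplitude of $\psi$ should decay because of the growth of the coefficient $p\sim s^2$ (WKB heuristic: amplitude $\propto (pq)^{-1/4}\sim 1/s$). Concretely, I would try the weighted functional
\begin{align*}
H(s):=s^2\psi^2+\frac{s^2\,p\,\psi'^2}{q}+\beta(s)\,\psi\psi' ,
\end{align*}
where the correction $\beta$ is chosen so that $H(1)=\psi(1)^2$ (this needs $\beta(1)=0$) and so that the quadratic form $H'$ in $(\psi,\psi')$ is negative semidefinite. The term $\beta$ is needed because the naive choice $s^2Q$ has $H'=2s\psi^2+\dots$, which has the wrong sign. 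The natural candidate is $\beta\propto s\,p/q$, which mimics the phase shift in the WKB expansion. I would compute $H'$, reduce it with the ODE, and impose the determinant condition on the resulting $2\times2$ form. I expect the hypothesis $c\ge\sqrt3$ to enter exactly here, through $q\ge 2c^2/3\ge 2$ together with $q'/q\le 2/s$, so that the residual coefficients have the right sign. Finally, I need $H\ge s^2\psi^2$, which should follow from completing the square with the $\psi'^2$ term once $\beta^2\le 4s^4p/q$.

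Main obstacle. The main difficulty is the choice of $\beta$. A purely diagonal energy $a\psi^2+b(p\psi')^2$ cannot work: removing the cross term forces $a=bpq$, and then $a$ must simultaneously decrease and dominate $s^2$, which is impossible. So a genuine mixed term is unavoidable, and finding one that makes both the sign of $H'$ and the lower bound on $H$ hold uniformly on $[1,\infty)$ is the delicate part.

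If this fails, I would fall back on the integral representation \eqref{eq:eigenvalueproblem}:
\begin{align*}
\lambda_0\psi(s)=2\int_0^1\psi(t)\cos(cst)\,dt .
\end{align*}
Integrating by parts gives $\tfrac{2\psi(1)\sin(cs)}{cs}$ minus a remainder. I would then try to control $\lambda_0$ and the remainder via Lemma~\ref{lem:PSWFrelation} and the bound $|\psi|\le\psi(1)$ from the first step.
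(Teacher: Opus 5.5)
Your Step~1 is correct, but it only yields $|\psi(s)|\le\psi(1)$. The factor $1/s$, which is the whole content of the lemma, is deferred to Step~2, and the ansatz you propose there cannot be completed.

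With the diagonal coefficients of $H$ fixed at $s^2$ and $s^2p/q$, the $\psi^2$- and $\psi'^2$-coefficients of $H'$ are $2s-\beta q/p$ and $\beta-2s/q-2c^2s^3p/q^2$. Requiring both to be nonpositive confines $\beta$ to the window $2sp/q\le\beta\le 2sp/q+\frac{2s}{q}\bigl(1+\frac{p\chi_0}{q}\bigr)$. This window has width $O(1/s)$ around $2s/c^2$. Both diagonal entries of $H'$ are then $O(1/s)$. Negative semidefiniteness therefore forces the cross coefficient $\beta'-2s\beta/(s^2-1)$ to be $O(1/s)$, i.e.\ $\beta'=4/c^2+O(1/s)$. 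Integrating this contradicts $\beta=2s/c^2+O(1/s)$ for large $s$, so no choice of $\beta$ works.

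Your fallback does not close the gap either. After integrating by parts, the remainder $\frac{2}{cs}\int_0^1\psi'(t)\sin(cst)\,dt$ is an integral over $[0,1]$, where the Step~1 bound says nothing. There the crude estimate $\int_0^1|\psi'|\,dt=\psi(0)-\psi(1)$ is exponentially larger than $\psi(1)$, so you would need a cancellation argument you have not supplied.

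The missing idea is to change the unknown rather than weight the energy: put $u(s):=s\psi(s)$. A direct substitution into \eqref{eq:ode} shows that $u$ again satisfies a self-adjoint equation,
\[
(Pu')'+Qu=0,\qquad P(s)=1-s^{-2},\qquad Q(s)=c^2-\chi_0 s^{-2}-2s^{-4}.
\]
Your Step~1 functional applied to $u$, namely $F=u^2+(P/Q)u'^2$, satisfies $F'=-(PQ)'u'^2/Q^2$. Here $(PQ)'=s^{-7}\bigl[2(c^2+\chi_0)s^4-4(\chi_0-2)s^2-12\bigr]$. Its numerator is increasing in $s^2$ and equals $2(c^2-\chi_0-2)$ at $s=1$. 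Also $Q(s)>c^2-\chi_0-2$ for $s>1$. By Lemma~\ref{lem:chi0} and $c\ge\sqrt3$ we have $c^2-\chi_0-2\ge\frac23c^2-2\ge0$; this is exactly where the hypothesis enters. Hence $s^2\psi(s)^2=u^2\le F(s)\le F(1^+)=\psi(1)^2$.

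Written back in terms of $\psi$, $F=(s^2+\rho)\psi^2+2s\rho\,\psi\psi'+s^2\rho\,\psi'^2$ with $\rho=p/(q-2s^{-2})$ in your notation. Your guess that the mixed term should be proportional to $sp/q$ was essentially right. However, the diagonal coefficients must change as well, and your ansatz did not allow that.
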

Proofs of Lemmas \ref{lem:chi0} and \ref{lem:tailbnd} are provided in Appendix~\ref{sec:tailbnd}.

For $|s|>1$, $\psi_0^c$ exhibits oscillatory behavior, as described in the following lemma.
\begin{lemma}[see {\cite[Theorem~3.44]{osipovProlateSpheroidalWave2013}}]\label{lem:exp}
Suppose that $c>0$ and that $s>1$. Suppose furthermore that $A^c(s)$ is defined via the formula
\begin{align}
A^c(s) = \frac{1}{\lambda_0 \psi_0^c(1)}\int_{-1}^1\frac{\sin(c(s-t))\psi_0^c(t)t}{s-t}\,\d t.
\end{align}
Then,
\begin{align}\label{eq:sin}
\psi_0^c(s) = \frac{2\psi_0^c(1)}{c s \lambda_0} \left[\sin(cs) + A^c(s)\right].
\end{align}
\end{lemma}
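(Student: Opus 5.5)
The plan is to start from the eigenvalue relation \eqref{eq:eigenvalueproblem} with $j=0$, which holds for all real $s$, and so in particular for $s>1$:
\begin{align*}
\lambda_0\psi_0^c(s)=\int_{-1}^1\psi_0^c(t)e^{\i cst}\,\d t=\int_{-1}^1\psi_0^c(t)\cos(cst)\,\d t,
\end{align*}
where we used that $\psi_0^c$ is even. The idea is to integrate by parts once so that a factor $1/s$ appears, and to produce the boundary term $\sin(cs)$. Directly integrating $\cos(cst)$ in $t$ gives
\begin{align*}
\lambda_0\psi_0^c(s)=\frac{2\psi_0^c(1)\sin(cs)}{cs}-\frac{1}{cs}\int_{-1}^1(\psi_0^c)'(t)\sin(cst)\,\d t.
\end{align*}
This already has the correct leading term. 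It remains to identify the remainder with $\frac{2\psi_0^c(1)}{cs}A^c(s)$, i.e.\ to show
\begin{align*}
-\int_{-1}^1(\psi_0^c)'(t)\sin(cst)\,\d t=\frac{2}{\lambda_0}\int_{-1}^1\frac{\sin(c(s-t))\,t\,\psi_0^c(t)}{s-t}\,\d t.
\end{align*}

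To get the right-hand side I will exploit the fact that $\psi_0^c$ is also an eigenfunction of the sinc operator $Q_c[f](x)=\frac{1}{\pi}\int_{-1}^1\frac{\sin(c(x-t))}{x-t}f(t)\,\d t$. This follows from $F_c^*F_c$, with eigenvalue $\mu_0=\frac{c}{2\pi}|\lambda_0|^2$, and it extends $\psi_0^c$ to all of $\R$; this is the standard identity from \cite{osipovProlateSpheroidalWave2013}. Differentiating the relation $\mu_0\psi_0^c(x)=\frac1\pi\int\frac{\sin c(x-t)}{x-t}\psi_0^c(t)\,\d t$, or equivalently applying the commutation of multiplication by $t$ with the kernel, yields a representation of $x\psi_0^c(x)$ minus its $Q_c$-image. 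Concretely, write $\frac{t}{s-t}=\frac{s}{s-t}-1$, so that
\begin{align*}
\int_{-1}^1\frac{\sin c(s-t)}{s-t}\,t\,\psi_0^c(t)\,\d t=s\pi\mu_0\psi_0^c(s)-\int_{-1}^1\sin(c(s-t))\psi_0^c(t)\,\d t.
\end{align*}
The last integral is $\sin(cs)\int\cos(ct)\psi_0^c-\cos(cs)\int\sin(ct)\psi_0^c=\sin(cs)\lambda_0\psi_0^c(1)$, using evenness together with \eqref{eq:eigenvalueproblem} at $s=1$. Substituting this into the definition of $A^c$ gives
\begin{align*}
A^c(s)=\frac{s\pi\mu_0\psi_0^c(s)}{\lambda_0\psi_0^c(1)}-\sin(cs).
\end{align*}
With $\pi\mu_0=c\lambda_0^2/2$ (taking $\lambda_0$ real, as is the case for $j=0$), the right-hand side of \eqref{eq:sin} becomes
\begin{align*}
\frac{2\psi_0^c(1)}{cs\lambda_0}\cdot\frac{cs\lambda_0\psi_0^c(s)}{2\psi_0^c(1)}=\psi_0^c(s),
\end{align*}
which proves the claim. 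This algebraic route makes the integration by parts unnecessary.

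The main obstacle is justifying the $Q_c$-eigenrelation for $s$ outside $[-1,1]$ with the exact constant $\mu_0=\frac{c}{2\pi}\lambda_0^2$ and correct sign, since $\lambda_0$ could a priori be complex. Here I would argue that for even $\psi_0^c$, $F_c$ maps real even functions to real functions, so $\lambda_0$ is real. I would then compute $F_c[F_c[\psi]](s)$, where $\overline{F_c}=F_c$ on even functions, and exchange the order of integration using $\int_{-1}^1e^{\i cu(s-t)}\,\d u=\frac{2\sin c(s-t)}{c(s-t)}$. This gives $\lambda_0^2\psi_0^c(s)=\frac{2}{c}\int\frac{\sin c(s-t)}{s-t}\psi_0^c(t)\,\d t$ for all real $s$, which is exactly what is needed.
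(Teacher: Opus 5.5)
Your proof is correct. The paper gives no argument of its own for this lemma; it simply imports it from \cite[Theorem~3.44]{osipovProlateSpheroidalWave2013}. Your derivation is therefore a self-contained replacement for that citation.

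The core steps are sound. First you obtain $\lambda_0^2\psi_0^c(s)=\frac{2}{c}\int_{-1}^1\frac{\sin c(s-t)}{s-t}\,\psi_0^c(t)\,\d t$ for all $s\in\R$, by applying $F_c$ twice, exchanging the order of integration and using evenness. The splitting $\frac{t}{s-t}=\frac{s}{s-t}-1$, together with $\int_{-1}^1\sin(c(s-t))\,\psi_0^c(t)\,\d t=\lambda_0\psi_0^c(1)\sin(cs)$, then reduces \eqref{eq:sin} to an algebraic identity. The shape of $A^c$, with its factor $t/(s-t)$, is exactly what this splitting of the sinc kernel produces. So this is presumably, in substance, the argument behind the cited theorem.

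Some streamlining is possible:
\begin{itemize}
\item The opening integration by parts is never used and can be dropped.
\item You need not argue separately that $\lambda_0$ is real. Computing $F_c[F_c[\psi_0^c]]$ yields $\lambda_0^2$ directly, not $|\lambda_0|^2$. In fact $\lambda_0>0$, since $\lambda_0\psi_0^c(0)=\int_{-1}^1\psi_0^c(t)\,\d t>0$.
\item Nothing in your computation uses $s>1$, so the identity holds for every $s\neq 0$. The hypothesis $s>1$ matters only for reading $A^c(s)$ as a correction to the $\sin(cs)$ term.
\end{itemize}
The only nondegeneracy you need is $\lambda_0\psi_0^c(1)\neq 0$. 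This holds because $\lambda_0>0$ and $\psi_0^c(1)>0$.
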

The representation \eqref{eq:sin} implies the approximation 
$\psi_0^c(s) \approx \frac{2\psi_0^c(1)}{\lambda_0}\frac{\sin(cs)}{cs}$, 
and the estimate 
$\psi_0^c(s) \lesssim \frac{2\psi_0^c(1)}{\lambda_0}\frac{1}{cs}$ for $s\ge 1$. 
These approximations are supported by numerical observations (see Figure~\ref{fig:pswfbnd}) 
and are used in the error model in Section~\ref{sec:spliterror}.
\begin{figure}[tb!]
\includegraphics[width=0.295\textwidth]{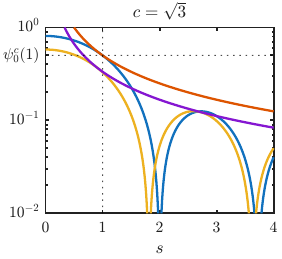}
\includegraphics[width=0.705\textwidth]{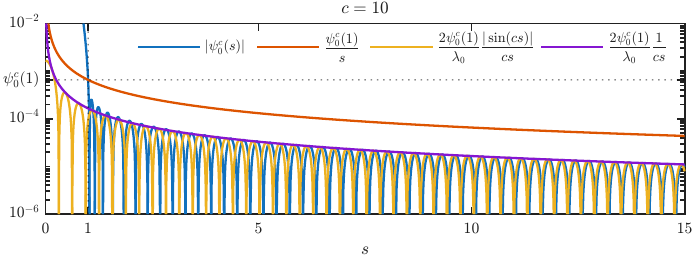}
\caption{The tail of $|\psi_0^c(s)|$ for $c=\sqrt{3}$ and $c=10$, shown together with the bound \eqref{eq:bnd}, the approximation obtained from \eqref{eq:sin} by neglecting $A^c(s)$, and its corresponding upper bound. By Lemma~\ref{lem:tailbnd}, the bound $|\psi_0^c(s)| \le \psi_0^c(1)/s$ holds for $c\ge\sqrt{3}$ (left). For large $s$, the approximation $\tfrac{2\psi_0^c(1)}{\lambda_0}\,\tfrac{|\sin(cs)|}{cs}$ accurately captures the oscillatory tail (right).}\label{fig:pswfbnd}
\end{figure}

\section{Ewald summation}
\label{sec:fastewald}

Before we consider PSWF-based implementations, we briefly review the classical Ewald summation and its components. We then summarize the general fast Ewald algorithm, in which the PSWF will serve as both mollifier and window.

\subsection{Ewald summation for triply periodic problems}

Recall that the kernel split of a radial kernel $G\colon \mathbb{R}^3\to \mathbb{R}$ is
\begin{align}
    G(r) = M(r) + R(r),
\end{align}
where $M$ and $R$ are the mollified and residual kernels, respectively. Periodizing this decomposition on $\Omega$ gives
\begin{align}\label{eq:Ewald}
    \sum_{\bm{r}\in\mathbb{Z}^3} G(|\bm{x}+L\bm{r}|) 
    = \sum_{\bm{r}\in\mathbb{Z}^3} M(|\bm{x}+L\bm{r}|) 
    + \sum_{\bm{r}\in\mathbb{Z}^3} R(|\bm{x}+L\bm{r}|).
\end{align}
Using this, the \emph{Ewald sum} for the triply periodic potential $\phi$ evaluated at particle locations $\bm{x}_i \in \Omega$ ($i=1,\ldots,n$) is
\begin{align}
    \phi_i := \phi(\bm{x}_i)= \sum_{\bm{r}\in\mathbb{Z}^3} \sum_{j=1}^n{\vphantom{\sum}}' 
    G(|\bm{x}_i - \bm{x}_j + L\bm{r}|)\, \rho_j 
    \;=\; \phi_i^{\text{local}} + \phi_i^{\text{far}} + \phi_i^{\text{self}},
\end{align}
where
\begin{align}
    \phi_i^{\text{local}} &:= \sum_{\bm{r}\in\mathbb{Z}^3} \sum_{j=1}^n{\vphantom{\sum}}' R(|\bm{x}_i - \bm{x}_j + L\bm{r}|)\, \rho_j, \\
    \phi_i^{\text{far}} &:= \sum_{\bm{r}\in\mathbb{Z}^3} \sum_{j=1}^n M(|\bm{x}_i - \bm{x}_j + L\bm{r}|)\, \rho_j, \label{eq:farfield}\\
    \phi_i^{\text{self}} &:= -\lim_{|\bm{x}|\to 0} M(|\bm{x}|)\, \rho_i.
\end{align}
The self-term removes the self-interaction at $\bm{x}_i=\bm{x}_j$ that would otherwise be included in $\phi_i^{\text{far}}$. 

For $G(r)=1/r$, the residual kernel $R$ decays rapidly in real space. Moreover,
$\hat M(\bm{\omega})=4\pi\,\hat\gamma_\sigma(|\bm{\omega}|)/|\bm{\omega}|^2$, and if $\gamma_\sigma$ is compactly supported and sufficiently smooth, then by the Paley–Wiener theorem $\hat M$ decays superalgebraically, enabling efficient evaluation of $\phi_i$ \cite{plonkaNumericalFourierAnalysis2018}.

\begin{remark}
Given that $\phi_i$ denotes the electrostatic potential at $\bm{x}_i$, the total electrostatic energy is
\begin{align} 
E \;=\; \frac{1}{2} \sum_{i=1}^n \rho_i\, \phi_i
\;=\; \frac{1}{2} \sum_{\bm{r}\in\mathbb{Z}^3} \sum_{i=1}^n \sum_{j=1}^n{\vphantom{\sum}}' 
\rho_i \rho_j\, G\!\left(|\bm{x}_i - \bm{x}_j + L\bm{r}|\right),
\end{align}
and the force acting on particle $i$ is given by
\begin{align}
\bm{F}_i \;=\; -\rho_i \nabla \phi_i
\;=\; - \sum_{\bm{r}\in\mathbb{Z}^3} \sum_{j=1}^n{\vphantom{\sum}}' 
\rho_i \rho_j\, \nabla G\!\left(|\bm{x}_i - \bm{x}_j + L\bm{r}|\right).
\end{align}
The practical evaluation of the force within the proposed Ewald method, including the corresponding real- and Fourier-space contributions, is discussed in detail in Remark~\ref{rem:force} (see in particular \eqref{eq:force}).
\end{remark}

\subsubsection{Computing the real-space sum}\label{sec:realspace}

The real-space sum $\phi_i^{\text{local}}$ is computationally efficient because the kernel $R$ decays rapidly. If $R$ is not compactly supported, it is truncated at a cutoff radius $r_c$ such that $|R(r_c)| \lesssim \varepsilon$, ensuring that the truncation error is of the same order of magnitude as the prescribed tolerance (cf.~Example~\ref{ex:Gaussian_split}). Consequently, only particles within a distance $r_c$ of $\bx_i$ are included in the computation. For uniform particle distributions, let $n_{r_c}$ denote the number of particles within distance $r_c$ of each $\bx_i$. Then, when, e.g., neighbor or cell lists are employed \cite{hockneyComputerSimulationUsing2021}, the total computational cost of this evaluation scales as $\mathcal{O}(n_{r_c} n)$.

\subsubsection{Direct computation of the Fourier-space sum}
\label{sec:Fourier_sum}

Using \eqref{eq:fcoeffs}, the far-field component \eqref{eq:farfield} is approximated by the truncated Fourier representation
\begin{align}\label{eq:Fsum}
    \phi_i^{\text{far}}
    \approx
    \phi_t^{\text{far}}(\bx_i) := 
    \frac{1}{V}\sum_{\bm{k}\in\mathcal{I}_{\bm{m}}\setminus\{\bm{0}\}} 
        \hat{M}(\bm{k})\,\hat{\rho}(\bm{k})\,
        e^{-\frac{2\pi}{L}\i\,\bm{k}\cdot \bm{x}_i},
    \qquad i=1,\ldots,n.
\end{align}
Here, for the Laplace kernel $\hat{G}(\bomega) = 4\pi/|\bomega|^2$,
\begin{align}
    \hat{M}(\bm{k})
    &=
    \hat{G}\bigl(\bm{\omega}(\bm{k})\bigr)\,
    \hat{\gamma}_{\sigma}\bigl(|\bm{\omega}(\bm{k})|\bigr)
    =
    \frac{L^2}{\pi |\bm{k}|^2}\,
    \hat{\gamma}_{\sigma}\bigl(|\bm{\omega}(\bm{k})|\bigr),
\end{align}
and the so-called \emph{structure factor} is defined as
\begin{align}
    \hat{\rho}(\bm{k})
    :=
    \sum_{j=1}^n \rho_j\,
    e^{\tfrac{2\pi}{L}\i\,\bm{k}\cdot \bm{x}_j},
    \qquad
    \rho(\bm{x}) := \sum_{j=1}^n \rho_j\,\delta(\bm{x}-\bm{x}_j).
\end{align}
where $\delta$ is the 3D Dirac delta distribution. Under the charge neutrality condition, the $\bm{k}=\bm{0}$ mode vanishes and is therefore omitted from the sum.

Let $\bm{\phi}^{\text{far}} = (\phi^{\text{far}}_j)_{j=1}^n$ and $\bm{\rho} = (\rho_j)_{j=1}^n$ be the length-$n$ vector of the potential and the source strengths at each $\bx_j$, let $\mathbf{U}\in\mathbb{C}^{|\mathcal{I}_{\bm m}|\times n}$ be the nonuniform Fourier matrix with entries $\mathbf{U}_{\ell j}=e^{\frac{2\pi}{L}\i\,\bm{k}_\ell\cdot\bm{x}_j}$, and let $\mathbf{D}=\mathrm{diag}(V^{-1}\hat M(\bm{k}_\ell))$, with the $\bm{k}=\bm{0}$ entry set to zero. Then we can write \eqref{eq:Fsum} in matrix form as
\begin{align}\label{eq:direct_matrix}
    \bm{\phi}^{\text{far}} \;\approx\; \mathbf{U}^{*}\, \mathbf{D}\, \mathbf{U}\, \bm{\rho}.
\end{align}
Direct evaluation of \eqref{eq:direct_matrix} using full Fourier matrices is clearly $\mathcal{O}(|\mathcal{I}_{\bm{m}}|\,n)$. To balance the real-space error, $|\mathcal{I}_{\bm{m}}|$ cannot be chosen too small. For the original Gaussian Ewald split and uniformly random particle locations, the minimal scaling is $|\mathcal{I}_{\bm{m}}|\sim n^{1/2}$, yielding a total cost of $\mathcal{O}(n^{3/2})$ \cite{hockneyComputerSimulationUsing2021}.

The steps above are summarized in Algorithm~\ref{alg:FastEwaldGeneral}.

\begin{alg}[Direct Fourier-space summation]\label{alg:FastEwaldGeneral}
\ \\[4pt]
\textit{Comment:} The radial mollifier $\gamma_{\sigma}$, with width parameter $\sigma$ is chosen based on the cutoff $r_c$, and the radial Green's function $G$ are given,
such that the mollified kernel satisfies
$\hat{M}(\bk)=\hat G(|\bomega(\bk)|)\,\hat\gamma_{\sigma}(|\bomega(\bk)|)$.
Matrix forms are stated in brackets.
\smallskip
\begin{algorithmic}[1]
\REQUIRE{$\{\bm{x}_j\}_{j=1}^n$, $\{\rho_j\}_{j=1}^n$, $L$, $m$\smallskip}
\ENSURE{$\{\phi_j^{\text{far}}\}_{j=1}^n$\medskip}

\STATE{[$\hat{\bm{\rho}} = \mathbf{U}\, \bm{\rho}$] Compute the structure factor:
\begin{align*}
    \hat{\rho}(\bk) = \sum_{j=1}^n \rho_j\, e^{\tfrac{2\pi}{L}\i\, \bm{k} \cdot \bm{x}_j}, 
    \qquad \bk\in \mathcal{I}_{\bm m}.
\end{align*}}

\STATE{[$\hat{\bm{\phi}}^{\text{far}} := \mathbf{D}\, \hat{\bm{\rho}}$] Apply diagonal scaling:
\begin{align*}
    \hat{\phi}^{\text{far}}(\bk) :=
    \begin{cases}
        \dfrac{1}{V}\, \hat{M}(\bm{k})\, \hat{\rho}(\bk), & \bm{k}\in \mathcal{I}_{\bm m}, \\
        0, & \bm{k}=\bm{0}.
    \end{cases}
\end{align*}}

\STATE{[$\bm{\phi}^{\text{far}} \approx \mathbf{U}^{*}\, \hat{\bm{\phi}}^{\text{far}}$] Evaluate the adjoint transform:
\begin{align*}
    \phi_i^{\text{far}} \approx \sum_{\bk\in\mathcal{I}_{\bm{m}}} \hat{\phi}^{\text{far}}(\bk)\, 
    e^{-\tfrac{2\pi}{L}\i\, \bm{k} \cdot \bm{x}_i},\qquad i=1,\ldots,n.
\end{align*}}
\end{algorithmic}
\end{alg}

\subsection{Fast Ewald summation}\label{sec:SE}

In fast Ewald methods, the discrete Fourier transforms of parts~1 and~3 in
Algorithm~\ref{alg:FastEwaldGeneral} are accelerated using FFTs, employing
similar components to those used in type 1 and type 2 nonuniform FFTs (NUFFT)~\cite{duttFastFourierTransforms1993a,greengardAcceleratingNonuniformFast2004}.

Let $\tilde\varphi$ be a periodized window function whose support is, up to possibly a small numerical tolerance, contained in $[-\alpha,\alpha]^3$. Using
$\tilde\varphi$, we spread the $n$ source strengths $\{\rho_j\}_{j=1}^n$ onto
the uniform grid $\{h\bm{l}\}_{\bm{l}\in\mathcal{I}_{\bm{m}}}$ with spacing
$h=L/m$, via
\begin{align}\label{eq:spread_def}
    \ai_{\bm{l}}
    :=
    \sum_{j=1}^n \rho_j\,\tilde\varphi(\bm{x}_j-h\bm{l}),
    \qquad
    \bm{l}\in\mathcal{I}_{\bm{m}}.
\end{align}
In practice, the sum is evaluated only over the $P^3$ grid points in the support
of $\tilde\varphi$ that yield nonzero contributions, giving a spreading cost of
$\mathcal{O}(P^3 n)$.

Applying the FFT to $\{\ai_{\bm{l}}\}_{\bm{l}\in\mathcal{I}_{\bm{m}}}$ yields
\begin{align}
    \hat{\ai}_{\bm{k}}
    =
    \sum_{\bm{l}\in\mathcal{I}_{\bm{m}}}
        \ai_{\bm{l}}\,
        e^{\frac{2\pi}{L}\i\,\bm{k}\cdot(h\bm{l})},
    \qquad
    \bm{k}\in\mathcal{I}_{\bm{m}},
\end{align}
which approximates $\hat{\rho}(\bm{k})\,\hat\varphi(\bm{k})$. To recover an
approximation to $\hat{\rho}(\bm{k})$, it is therefore natural to multiply by
$1/\hat\varphi(\bm{k})$, undoing the convolution introduced by the window.
Since this multiplication depends only on~$\bm{k}$, it is incorporated into the
diagonal scaling step of the algorithm (corresponding to step~2 of
Algorithm~\ref{alg:FastEwaldGeneral}).

Thus, step~1 of Algorithm~\ref{alg:FastEwaldGeneral} is approximated using an FFT in
$\mathcal{O}(P^3 n + |\mathcal{I}_{\bm{m}}|\log |\mathcal{I}_{\bm{m}}|)$.
We emphasize that no oversampling is applied; oversampling would normally be introduced by truncation (or zero-padding) of Fourier modes. In the present setting we assume that the aliasing errors are sufficiently damped by the diagonal scaling; see
Remark~\ref{rem:aliasing_oversampling}.

Because step~3 of the direct algorithm is the adjoint of step~1, the
approximation of step~3 uses the same deconvolution factor,
$1/\hat\varphi(\bm{k})$. Thus the total deconvolution factor is
$1/\hat\varphi(\bm{k})^2$, which is included in the diagonal scaling. The resulting diagonal scaling step in the fast Ewald method is therefore
\begin{align}\label{eq:diag_scale}
    \hat{\aii}_{\bm{k}}
    :=
    \frac{1}{V}\,
    \frac{\hat{M}(\bm{k})}{\hat\varphi(\bm{k})^2}\,
    \hat{\ai}_{\bm{k}},
    \qquad
    \bm{k}\in\mathcal{I}_{\bm{m}}\setminus\{\bm{0}\}.
\end{align}

Now, the inverse FFT yields $\{\aii_{\bm{l}}\}_{\bm{l}\in\mathcal{I}_{\bm{m}}}$, which are then interpolated back to the
particle locations from the uniform grid, yielding the final approximation to the far-field potential:
\begin{align}\label{eq:FastEwaldSteps34}
    \phi_i^{\text{far}}
    \approx
    \sum_{\bm{l}\in\mathcal{I}_{\bm{m}}}
    \aii_{\bm{l}}\,\tilde\varphi(\bm{x}_i-h\bm{l}).
\end{align}

Equations \eqref{eq:spread_def}--\eqref{eq:FastEwaldSteps34} may be written
compactly in matrix form as
\begin{align}\label{eq:matform}
    \bm{\phi}^{\text{far}}
    \approx
    \mathbf{S}^{\!T}\,\mathbf{F}^{-1}\,\mathbf{C}\,\mathbf{D}\,\mathbf{F}\,\mathbf{S}\,\bm{\rho},
\end{align}
where $\mathbf{S}$ denotes spreading, $\mathbf{F}$ and $\mathbf{F}^{-1}$ the
forward and inverse FFTs, $\mathbf{D}=V^{-1}\mathrm{diag}(\hat M(\bm{k}))$,
$\mathbf{C}=\mathrm{diag}(\hat\varphi(\bm{k})^{-2})$, and $\mathbf{S}^{T}$
denotes interpolation.

Thus, the overall cost of evaluating the Fourier-space component is
\begin{align}
\mathcal{O}\!\left(P^3 n + |\mathcal{I}_{\bm{m}}|\log |\mathcal{I}_{\bm{m}}|\right),
\end{align}
that is, linear in the number of particles plus the FFT cost, with $P$ the
window support size. For typical parameter choices with
$|\mathcal{I}_{\bm{m}}|\propto n$, this yields an overall complexity of
$\mathcal{O}(n\log n)$.

The steps of the fast Ewald method, as presented above, are summarized in Algorithm~\ref{alg:SE_Fourier} (that is also shown in a similar form in \cite{liangAcceleratingFastEwald2025}).

\begin{alg}[Fast Fourier-space summation]\label{alg:SE_Fourier}
\ \\[4pt]
\textit{Comment:} The radial mollifier $\gamma_{\sigma}$, with width parameter $\sigma$ chosen based on the cutoff $r_c$, the radial Green's function $G$ such that
$\hat{M}(\bk)=\hat G(|\bomega(\bk)|)\,\hat\gamma_{\sigma}(|\bomega(\bk)|)$, and the window function $\varphi$ whose periodization is used for spreading and interpolation, are given. Matrix forms are stated in brackets.
\smallskip
\begin{algorithmic}[1]
\REQUIRE{$\{\bm{x}_j\}_{j=1}^n$, $\{\rho_j\}_{j=1}^n$, $L$, $m$\smallskip}
\ENSURE{$\{\phi_j^{\text{far}}\}_{j=1}^n$\medskip}

\STATE[$\bm{\ai}:=\mathbf{S}\,\bm{\rho}$] Spread to the uniform grid of spacing $h=L/m$:
\begin{align*}
    \ai_{\bm l}:=\sum_{j=1}^n \rho_j\,\tilde\varphi(\bm{x}_j-h\bm l),
    \qquad \bm l\in\mathcal I_{\bm m}.
\end{align*}

\STATE[$\hat{\bm{\ai}}=\mathbf{F}\,\bm{\ai}$] 3D FFT on the grid:
\begin{align*}
    \hat \ai_{\bm k}
    =
    \sum_{\bm l\in\mathcal I_{\bm m}}
    \ai_{\bm l}\,e^{\frac{2\pi}{L}\i\,\bm k\cdot(h\bm l)},
    \qquad \bm k\in\mathcal{I}_{\bm{m}}.
\end{align*}

\STATE[$\hat{\bm{\aii}}:=\mathbf{C}\,\mathbf{D}\,\hat{\bm{\ai}}$] Diagonal scaling (omit $\bm k=\bm 0$):
\begin{align*}
    \hat \aii_{\bm k}:=
    \begin{cases}
        \dfrac{1}{V}\,\dfrac{\hat M(\bm k)}{\hat\varphi(\bm k)^2}\,\hat \ai_{\bm k},
        & \bm k\in\mathcal{I}_{\bm{m}}\setminus\{\bm{0}\},\\[2pt]
        0, & \bm k=\bm 0.
    \end{cases}
\end{align*}

\STATE[$\bm{\aii}=\mathbf{F}^{-1}\hat{\bm{\aii}}$] Inverse FFT:
\begin{align*}
    \aii_{\bm l}
    =
    \sum_{\bm k\in\mathcal I_{\bm m}}
    \hat \aii_{\bm k}\,e^{-\frac{2\pi}{L}\i\,\bm k\cdot(h\bm l)},
    \qquad \bm l\in \mathcal{I}_{\bm{m}}.
\end{align*}

\STATE[$\bm{\phi}^{\text{far}}\approx\mathbf{S}^{\!T}\bm{\aii}$] Interpolate to particle locations:
\begin{align*}
    \phi_i^{\text{far}}
    \approx
    \sum_{\bm l\in\mathcal I_{\bm m}}
    \aii_{\bm l}\,\tilde\varphi(\bm x_i-h\bm l),
    \qquad i=1,\ldots,n.
\end{align*}
\end{algorithmic}
\end{alg}

\begin{remark}\label{rem:SPME}
Alternative conventions shift window factors differently, giving equivalent
schemes with modified diagonal scaling (e.g., SPME \cite{essmannSmoothParticleMesh1995}).
\end{remark}

\section{Fast Ewald summation for the Laplace kernel using the first PSWF of order zero}
\label{sec:fast_Ewald_PSWF}

In this section, we present how the first PSWF of order zero, $\psi_0^c$, can be employed both as a mollifier in the kernel split and as a window function for the spreading/interpolation steps of Algorithm \ref{alg:SE_Fourier}.

\subsection{General kernel split for numerical mollifiers with support $[-1,1]$}
\label{sec:gen_Ewald_decomp_1}

Let $\gamma_1$ be an even, nonnegative numerical mollifier with $\spt(\gamma_1) = [-1,1]$ and normalization
$\int_{\R} \gamma_1(u)\,\mathrm{d}u = 1$.
Define a one-dimensional \emph{split function} $\Phi\colon \R \to \R$ by
\begin{align}\label{eq:split-function}
    \Phi(x) := 2 \int_0^x \gamma_1(u)\,\mathrm{d}u.
\end{align}
Scaling $\gamma_1$ to a cutoff radius $r_c > 0$, define
\begin{align}\label{eq:rc_mollifier}
    \gamma_{r_c}(x) := \frac{1}{r_c}\,\gamma_1\!\left(\frac{x}{r_c}\right),
\end{align}
which is also a numerical mollifier with support $[-r_c, r_c]$, inheriting the same properties as $\gamma_1$.

Now, by Lemma~\ref{lem:laplacian}, the three-dimensional mollification of the Laplace Green's function is
\begin{align}\label{eq:M}
    M(\bx) = \frac{\Phi_{r_c}(|\bx|)}{|\bx|},
\end{align}
where
\begin{align}
\Phi_{r_c}(x) := \Phi(x/r_c)
= 2 \int_0^{x/r_c} \gamma_1(v)\,\mathrm{d}v
= 2 \int_0^x \gamma_{r_c}(u)\,\mathrm{d}u.
\end{align}
Its Fourier transform then satisfies
\begin{align}
    \hat{M}(\bwv) = \frac{4\pi}{|\bwv|^2}\,\hat{\gamma}_{r_c}(|\bwv|).
\end{align}

Finally, substituting \eqref{eq:M} in the kernel split $G = M + R$, the residual kernel becomes
\begin{align}\label{eq:R}
    R(\bx) = \frac{1 - \Phi_{r_c}(|\bx|)}{|\bx|}.
\end{align}

\begin{remark}
Because $\gamma_1$ is nonnegative and normalized, we have $0 \le \Phi_{r_c}(x) \le 1$ for all $x \ge 0$,
ensuring that the residual kernel $R(\bx)$ is nonnegative.
\end{remark}

\begin{remark}\label{rem:compactsupport}
Since the unscaled mollifier $\gamma_{1}$ satisfies
$\int_{-1}^{1} \gamma_{1}(u),\mathrm{d}u = 1$, we have $\Phi(1)=1$; hence $\Phi_{r_c}(x)=1$ for all $x \ge r_c$. In this case, the residual kernel \eqref{eq:R} is compactly supported on the ball of radius $r_c$, since $R(\bx)=0$ whenever $|\bx| \ge r_c$. A construction of a split function of this type was first presented for the PSWF in the DMK framework \cite{jiangDualspaceMultilevelKernelsplitting}. 

\end{remark}

\subsection{A PSWF mollifier for kernel splitting}
\label{sec:PSWF_mollifier}

Following Section~\ref{sec:gen_Ewald_decomp_1}, we now construct a PSWF mollifier. We start by defining a mollifier $\gamma^{c_s}_1$, supported on $[-1,1]$, as
\begin{align}
\gamma_1^{c_s}(x) :=
\begin{cases}
\dfrac{\psi_0^{c_s}(x)}{\lambda_0 \psi_0^{c_s}(0)}, & x \in [-1,1],\\[2mm]
0, & x \notin [-1,1],
\end{cases}
\end{align}
where $\psi_0^{c_s}$ denotes the first PSWF with Fourier bandlimit $c_s>0$ (the subscript “s” stands for “split”). Here, $\lambda_0$ is the eigenvalue associated with $\psi_0^{c_s}$, and the normalization
$\lambda_0 \psi_0^{c_s}(0)$ follows from Lemma~\ref{lem:PSWFrelation}, ensuring that
$\int_{\mathbb{R}} \gamma_1^{c_s}(x)\,\mathrm{d}x = 1$.

To obtain a mollifier supported on a general radius $r_c>0$, we scale $\gamma^{c_s}_1$ as
\begin{align}\label{eq:PSWF_mollifier}
    \gamma_{r_c}^{c_s}(x)
    := \frac{1}{r_c}\,\gamma_1^{c_s}\!\left(\frac{x}{r_c}\right)
    =
    \begin{cases}
        \dfrac{\psi_0^{c_s}(x/r_c)}{r_c\,\lambda_0\,\psi_0^{c_s}(0)}, & x \in [-r_c, r_c],\\[2mm]
        0, & x \notin [-r_c, r_c],
    \end{cases}
\end{align}
which inherits the mollifier properties of $\gamma^{c_s}_1$.
In contrast to the Gaussian mollifier, whose width parameter $\sigma$ is free,
the PSWF mollifier is compactly supported on $[-r_c,r_c]$.
Consequently, the effective width parameter coincides with the cutoff radius, that is, $\sigma = r_c$.

By Lemma \ref{lem:PSWFrelation}, the Fourier transform of $\gamma_{r_c}^{c_s}$ can be written as
\begin{align}\label{eq:PSWF_mollifier_ft}
    \hat{\gamma}_{r_c}^{c_s}(\omega)
    = \hat{\gamma}_1^{c_s}(r_c\omega)
    = \frac{\psi_0^{c_s}\!\left(r_c \omega / c_s\right)}{\psi_0^{c_s}(0)}.
\end{align}
For $|\omega|>c_s/r_c$, the argument $r_c\omega/c_s$ lies outside $[-1,1]$, so this formula is no longer directly useful when $\psi_0^{c_s}$ is computed numerically from its representation on $[-1,1]$. In that case, the Fourier transform is instead evaluated from its defining integral:
\begin{align}\label{eq:gammahatintegral}
    \hat{\gamma}_{r_c}^{c_s}(\omega)
    = \int_{-r_c}^{r_c} \gamma_{r_c}^{c_s}(x)\,
      e^{-\i\omega x}\,\mathrm{d}x = \frac{1}{\psi_0^{c_s}(0)}\int_{-1}^1\psi_0^{c_s}(u)e^{-i\omega r_c u}\,\d u.
\end{align}

Using \(\gamma_{r_c}^{c_s}\) and \eqref{eq:split-function}, we can thus denote a PSWF-based split function that is closely related to the split function introduced in the DMK
framework~\cite{jiangDualspaceMultilevelKernelsplitting} as
\begin{align}\label{eq:Phi_cs}
    \Phi^{c_s}_{r_c}(x) 
    := 2\int_0^{x} \gamma^{c_s}_{r_c}(t)\,\mathrm{d}t
    = \frac{2}{\lambda_0\,\psi_0^{c_s}(0)} 
      \int_0^{x/r_c} \psi_0^{c_s}(u)\,\mathrm{d}u, 
      \qquad x \in [0, r_c],
\end{align}
where the integral must be evaluated numerically.

Finally, the mollified kernel used in the fast Ewald algorithm
(Algorithm~\ref{alg:SE_Fourier}) can now be written as
\begin{align}\label{eq:MhatPSWF}
    \hat{M}(\bomega)
    = \frac{4\pi}{|\bomega|^2}\,\hat{\gamma}_{r_c}^{c_s}(|\bomega|),
\end{align}
and the residual kernel for the real-space sum is obtained from the split function \eqref{eq:Phi_cs} via~\eqref{eq:R}.

As stated in Remark~\ref{rem:compactsupport}, the residual kernel is compactly supported by construction. Figure~\ref{fig:Phi} shows $\Phi^{c_s}_{0.5}(x)$ and a cross-section of the corresponding residual kernel for $r_c = 0.5$ and various values of $c_s$, illustrating both the compact support and the dependence on $c_s$.

\begin{figure}[t!]
    \centering
    \includegraphics[width=1\textwidth]{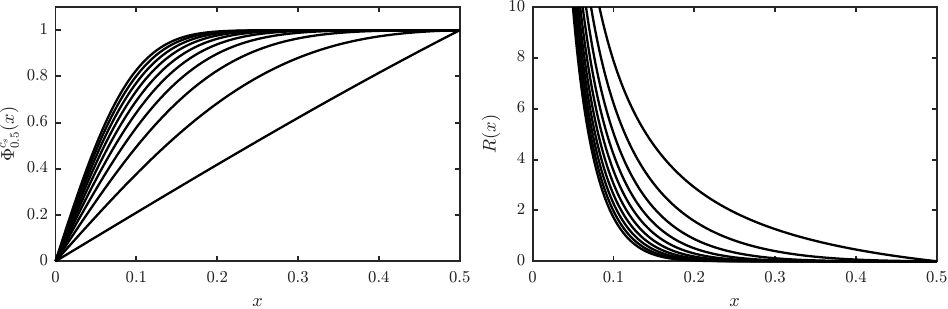}
    \begin{tikzpicture}[overlay]
        \draw[-latex] (-3,2.75) node[below right] {\small $c_s$} -- (-5.75,4.5);
        \draw[-latex] (3.5,2.5) node[right] {\small $c_s$} -- (1.75,1.5);        
    \end{tikzpicture}
    \caption{Split function $\Phi_{r_c}^{c_s}(x)$ (left) and residual kernel $R(x)$ (right) for $r_c = 0.5$ and $c_s \in \{1,6,11,\ldots,49\}$. By construction, the split function satisfies $\Phi_{r_c}^{c_s}(r_c)=1$, and the residual kernel is compactly supported on $[-r_c,r_c]$.}
    \label{fig:Phi}
\end{figure}

Table~\ref{tab:mollifiers} summarizes the PSWF-based mollifier alongside the classical Gaussian mollifier derived in Example~\ref{ex:Gaussian_split}. For a comparison of their spectral decay and support, see also Figure~\ref{fig:PSWFvsGaussian}. 

\begin{table}[t!]
\renewcommand{\arraystretch}{1.5}
\newcommand\Tstrut{\rule{0pt}{5ex}}
\newcommand\TTstrut{\rule{0pt}{6.5ex}}
\centering
\caption{Gaussian and PSWF-based mollifiers. The Gaussian mollifier has width parameter $\sigma$, while the PSWF mollifier is compactly supported with cutoff $r_c$ and bandlimit $c_s$.} 
\label{tab:mollifiers}
\small
\begin{tabular}{|l||l|}
\hline
Definition in real space & Fourier transform \\
\hline
\hline
\Tstrut
$\displaystyle 
  \gamma^{\text{Gauss}}_{\sigma}(x) 
  = \frac{1}{\sqrt{\pi}\,\sigma}\,e^{-x^2/\sigma^2}$ 
& $\displaystyle \hat{\gamma}^{\text{Gauss}}_{\sigma}(\omega) 
   = e^{-\sigma^2\omega^2/4}$ \\[3ex]
\hline
\TTstrut
$\displaystyle \gamma_{r_c}^{\text{PSWF}}(x) = 
  \begin{cases}
    \dfrac{\psi_0^{c_s}(x/r_c)}{r_c\,\lambda_0\,\psi_0^{c_s}(0)}, & x \in [-r_c,r_c]\\[1.5ex]
    0, & \text{otherwise}
  \end{cases}$ 
& $\displaystyle \hat{\gamma}_{r_c}^{\text{PSWF}}(\omega) 
   = 
   \displaystyle\frac{\psi_0^{c_s}\!\left(r_c \omega / c_s\right)}{\psi_0^{c_s}(0)} 
   $
   \\[4ex]
\hline
\end{tabular}
\end{table}

\subsection{A PSWF window function}\label{sec:PSWFwindow}

Analogous to the PSWF mollifier, we introduce two parameters for a PSWF window function: a shape (bandlimit) parameter $c_w>0$ (with $w$ denoting ``window''), which controls spectral concentration and decay, and a width parameter $\alpha>0$, which determines the window half-width.

We define a one-dimensional window function by
\begin{align}
\varphi_1^{c_w}(x) := 
\begin{cases}
\displaystyle\frac{\psi_0^{c_w}(x)}{\psi_0^{c_w}(0)},& x\in [-1,1],\\[5pt] 
0, & \text{otherwise}.
\end{cases}
\end{align}
By Lemma~\ref{lem:PSWFrelation}, its Fourier transform is given by
\begin{align}
    \hat{\varphi}_1^{c_w}(\omega) 
    = \lambda_0\, \frac{\psi_0^{c_w}(\omega/c_w)}{\psi_0^{c_w}(0)}.
\end{align}
A three-dimensional window function $\varphi^{c_w}\colon \R^3 \to \R$ is constructed by rescaling $\varphi_1^{c_w}$ and forming a tensor product of the resulting one-dimensional windows. 
This choice provides a natural extension from one to three dimensions while preserving the optimal concentration properties of the underlying one-dimensional PSWF, and it allows the Fourier transform to factorize into a product of one-dimensional transforms, thereby simplifying both the theoretical analysis and the numerical implementation:
\begin{align}
    \varphi^{c_w}(\bx) 
    := \prod_{i=1}^3 \varphi_1^{c_w}(x_i/\alpha)
    = \begin{cases}
    \displaystyle\prod_{i=1}^3 \frac{\psi_0^{c_w}(x_i/\alpha)}{\psi_0^{c_w}(0)},& \bx \in [-\alpha,\alpha]^3,\\[5pt]
      0,&\text{otherwise}.
    \end{cases}
\end{align}

Since the Fourier transform of a tensor product factorizes into the product of the one-dimensional transforms, we obtain
\begin{align}
\hat{\varphi}^{c_w}(\bomega)
= \prod_{i=1}^3 \alpha\,\hat{\varphi}_1^{c_w}(\alpha \omega_i)
= \left(\frac{\alpha \lambda_0}{\psi_0^{c_w}(0)}\right)^3
\prod_{i=1}^3 \psi_0^{c_w}\!\left(\frac{\alpha \omega_i}{c_w}\right).
\end{align}

The PSWF window function $\varphi^{c_w}$ is by definition compactly supported on $[-\alpha,\alpha]^3$, hence it equals its truncation, $\varphi_t^{c_w}(\bx) = \varphi^{c_w}(\bx)$.

\begin{remark}\label{rem:force}
Due to the tensor-product structure, the gradient of $\varphi^{c_w}$ is
\begin{align}
\frac{\partial\varphi^{c_w}}{\partial x_j}(\bx) 
= \frac{1}{\alpha\, \psi_0^{c_w}(0)^3}\,
\frac{\mathrm{d} \psi_0^{c_w}}{\mathrm{d} s}\!\left(\frac{x_j}{\alpha}\right)\,
\prod_{\substack{i=1\\ i\neq j}}^3 \psi_0^{c_w}\!\left(\frac{x_i}{\alpha}\right),\qquad j=1,2,3,
\end{align}
where the factor $1/\alpha$ arises from the chain rule. The derivative
$\mathrm{d}\psi_0^{c_w}/\mathrm{d}s$ is obtained using the same algorithm as the $\psi_0^{c_w}$ function; see Appendix \ref{app:pswf}.

Thus, for force computations (e.g.\ $\bm{F}_i = -\rho_i \nabla \phi_i$), $\nabla \phi_i$ can be obtained by replacing the last interpolation step (step~5) in Algorithm~\ref{alg:SE_Fourier} with
\begin{align}\label{eq:force}
    \nabla \phi_i^{\text{far}} = \sum_{\bm{l}\in \mathcal{I}_{\bm{m}}} \aii_{\bm{l}}\,\nabla \tilde{\varphi}^{c_w}(\bx_i-h\bm{l}).
\end{align}
\end{remark}

\begin{remark}
To accelerate window evaluation, the 1D domain may be partitioned into $w$ intervals, each approximated by a centered polynomial of degree $p=w+3$, precomputed via a Vandermonde system and then evaluated efficiently using Horner’s rule and SIMD vectorization~\cite{barnettParallelNonuniformFast2019a}. This has been shown to yield a $2$--$3$ times speed-up for spreading and interpolation in 1D for the Kaiser--Bessel function \cite{shamshirgarFastEwaldSummation2021}.

\end{remark}

\subsubsection{Comparison of the PSWF window with Gaussian and B-spline windows}\label{sec:compwindow}

\begin{table}[t!]
\renewcommand{\arraystretch}{1.5}
\newcommand\Tstrut{\rule{0pt}{5.5ex}}
\newcommand\TTstrut{\rule{0pt}{6.5ex}}
\newcommand\Bstrut{\rule[-0.9ex]{0pt}{0pt}}
\centering
\caption{Gaussian, B-spline, and PSWF windows in one dimension. In practice, three-dimensional tensor-product windows $\varphi(\bx)=\prod_{i=1}^3 \varphi(x_i)$ are used. Here $c_g>0$ denotes the Gaussian shape parameter, $\alpha$ the window half-width, and $P$ the B-spline order (which coincides with the support size). For the Gaussian window, the commonly used free-space form is listed.}
\label{tab:reference_windows}
\small
\begin{tabular}{|l||l|}
\hline
Definition in real space & Fourier transform\\
\hline
\hline
\Tstrut
$\displaystyle\varphi^{\text{Gauss}}(x) = e^{-c_g(x/\alpha)^2}$ 
&$\displaystyle\hat{\varphi}^{\text{Gauss}}(\omega) = 
   \alpha \sqrt{\frac{\pi}{c_g}}\; e^{-\omega^2\alpha^2/(4c_g)}$ \\[3ex]
\hline
\Tstrut
$\displaystyle 
\varphi^{\text{B-spline}}(x)=B_P(x/h)$
& $\displaystyle 
\hat{\varphi}^{\text{B-spline}}(\omega)=h\,\hat B_P(h\omega)$ \\[2ex]
\text{where} & \text{where}\\
\qquad $\displaystyle 
B_1(x):=\begin{cases} 
        1, & x\in[-\tfrac12,\tfrac12)\\[1ex]
        0, & \text{otherwise}
      \end{cases}$ 
& \qquad $\displaystyle 
\hat B_P(\omega):=\left(\frac{\sin(\omega/2)}{\omega/2}\right)^{P}$\\[3.5ex]
\qquad $\displaystyle 
B_{P+1}(x):=(B_P * B_1)(x)$  & \\[3ex]
\hline
\TTstrut
$\displaystyle 
\varphi^{\text{PSWF}}(x)
=\begin{cases}
\dfrac{\psi_0^{c_w}(x/\alpha)}{\psi_0^{c_w}(0)}, & x\in [-\alpha,\alpha]\\[1.5ex]
0, & \text{otherwise}
\end{cases}$
& $\displaystyle 
\hat{\varphi}^{\text{PSWF}}(\omega)
=
\displaystyle\alpha\,\lambda_0\,\frac{\psi_0^{c_w}(\alpha \omega / c_w)}{\psi_0^{c_w}(0)}
$\\[4ex]
\hline
\end{tabular}
\end{table}

Table~\ref{tab:reference_windows} summarizes the PSWF, Gaussian, and B-spline windows. The Gaussian window is presented here in its non-truncated (free-space) form. In practice, however, the window is truncated in real space, but rather than defining the Gaussian window explicitly as a truncated function, most implementations continue to use the free-space expressions for both the window and its Fourier transform \cite{lindboSpectralAccuracyFast2011,shamshirgarFastEwaldSummation2021}. This practice introduces truncation errors in the spreading and interpolation steps for the Gaussian window—errors that do not arise for B-spline or PSWF windows, which are compactly supported by construction.

B-splines have support width $P$, equal to the spline order, and have Fourier transform $\hat\varphi(\omega)=h\,\hat B_P(h\omega)$, where $h$ denotes the grid spacing and $\hat B_P(\omega)=(\sin(\omega/2)/(\omega/2))^{P}$. Consequently, their spectral decay is algebraic. For a fixed real-space width $\alpha$, B-splines therefore require either a finer grid or a higher order $P$ to match the accuracy of the PSWF window; increasing $P$ enlarges the support and increases the cost of spreading and interpolation. The widely used SPME method \cite{essmannSmoothParticleMesh1995}---implemented, for example, in GROMACS~\cite{vanderspoelGROMACSFastFlexible2005}---employs non-centered cardinal B-splines for spreading and interpolation and deconvolves their spectrum through a modified diagonal scaling (see Remark~\ref{rem:SPME}). However, the underlying algebraic spectral decay remains unchanged.

\section{Theoretical error analysis for the PSWF split and window functions}
\label{sec:error_analysis}

In this section, we present a rigorous theoretical analysis of the error sources arising in fast Ewald summation using PSWF-based functions.
We derive absolute $L^2$ error estimates associated with both the PSWF mollifier and the PSWF window function introduced in Section~\ref{sec:fast_Ewald_PSWF}. 
For the purpose of parameter selection---where fully rigorous bounds are unnecessarily detailed---we also derive simplified, closed-form error models that are well suited for practical use.

The squared RMS (root-mean-square) error is measured in the volume-normalized $L^2$ norm,
\begin{align}
\frac{1}{V}\|f_{\text{ref}}-f\|^2_{L^2(\Omega)}
\;=\; \frac{1}{V}\int_{\Omega} |f_{\text{ref}}(\bx)-f(\bx)|^2\,\mathrm{d}\bx
\;\approx\; \frac{1}{n}\sum_{i=1}^n |f_{\text{ref}}(\bx_i)-f(\bx_i)|^2,
\end{align}
where the approximation is justified by the law of large numbers when the particle locations
$\{\bx_i\}_{i=1}^n$ are chosen independently and uniformly from $\Omega$.
In this setting, the discrete average converges to the continuous integral as $n \to \infty$.

Let $\phi^{\text{far}}_{t}$ denote the truncated far-field sum $\phi^{\text{far}}$ (see \eqref{eq:Fsum}), and let $\phi^{\text{far}}_{t,h}$ denote an approximation of $\phi^{\text{far}}_{t}$ computed using the fast Ewald algorithm (Algorithm~\ref{alg:SE_Fourier}). In fast Ewald summation, two principal sources of error arise:
(i) truncation of Fourier modes in the Fourier-transformed mollified kernel—referred to as \emph{split errors}—quantified by
$\frac{1}{\sqrt{V}}\|\phi^{\text{far}}-\phi_{t}^{\text{far}}\|_{L^2(\Omega)}$, and
(ii) \emph{approximation errors} arising in the spreading and interpolation steps, quantified by
$\frac{1}{\sqrt{V}}\|\phi_{t}^{\text{far}}-\phi_{t,h}^{\text{far}}\|_{L^2(\Omega)}$.


When the residual kernel is not compactly supported, additional truncation errors arise in the real-space sum. However, for PSWF-based mollifiers, the residual kernel $R$ is compactly supported, and real-space truncation errors are therefore eliminated. This further simplifies the analysis.

By the triangle inequality, the total error can be bounded as
\begin{align}\label{eq:toterror}
\frac{1}{\sqrt{V}}\|\phi^{\text{far}}-\phi_{t,h}^{\text{far}}\|_{L^2(\Omega)}
\leq
\frac{1}{\sqrt{V}}\|\phi^{\text{far}}-\phi_{t}^{\text{far}}\|_{L^2(\Omega)}
+
\frac{1}{\sqrt{V}}\|\phi_{t}^{\text{far}}-\phi_{t,h}^{\text{far}}\|_{L^2(\Omega)}.
\end{align}
Consequently, contributions (i) and (ii) may be analyzed separately.

\subsection{Kernel split truncation errors}\label{sec:dirsumerrors}

Suppose that the errors of the DFT steps in Algorithm \ref{alg:SE_Fourier} (steps 1--2 and 4--5) are negligible. Then the total error depends on the choice of mollifier used in the split and Algorithm \ref{alg:SE_Fourier} turns into Algorithm \ref{alg:FastEwaldGeneral}. In this subsection we analyze these split-errors. 

We start with a small lemma that relates to the structure factor $\hat{\rho}(\bk)$ and the distribution of points. It is important since Ewald summation is most efficient when the points are close to uniformly random. The lemma provides a uniform bound on the Fourier coefficients 
of $\bm{\rho}$ at frequencies above the cutoff $\Kmax$. In particular, it shows 
that the energy contained in any high-frequency mode $\hat{\rho}(\bk)$ cannot exceed a constant multiple of the total $\ell_2$–energy of $\bm{\rho}$.

\begin{lemma}\label{lemma:rho}
For all $\bk \in \Z^3$, the Fourier coefficients of the particle density satisfy
\begin{align}\label{eq:rhoassumption}
    |\hat{\rho}(\bk)|^2 \leq C_{\rho} \, \|\bm{\rho}\|_2^2,
\end{align}
where $0 < C_{\rho} \leq n$.
\end{lemma}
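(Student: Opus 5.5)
The bound follows from the Cauchy--Schwarz inequality applied to the structure factor. Since $\hat\rho(\bk)=\sum_{j=1}^n \rho_j e^{\frac{2\pi}{L}\i\,\bk\cdot\bx_j}$ and each exponential has modulus one,
\begin{align*}
|\hat\rho(\bk)|^2 \le \Bigl(\sum_{j=1}^n |\rho_j|\Bigr)^2 \le n \sum_{j=1}^n |\rho_j|^2 = n\,\|\bm{\rho}\|_2^2 .
\end{align*}
This holds for every $\bk\in\Z^3$. We therefore define $C_\rho$ to be the best uniform constant,
\begin{align*}
C_\rho := \sup_{\bk\in\Z^3} \frac{|\hat\rho(\bk)|^2}{\|\bm{\rho}\|_2^2},
\end{align*}
which is finite and satisfies $C_\rho\le n$ by the display above. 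This definition is legitimate whenever $\bm\rho\neq \bm 0$; if $\bm\rho=\bm 0$ the statement is trivial with any constant.

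It remains to check that $C_\rho>0$, which is the only point needing a little care. Under charge neutrality we have $\hat\rho(\bm 0)=0$, so the $\bk=\bm 0$ mode does not help. Instead we use Parseval on the torus, applied to the trigonometric sum $\bk\mapsto\hat\rho(\bk)$. Its average over large cubes of $\bk$ behaves like
\begin{align*}
\frac{1}{N^3}\sum_{\bk\in\mathcal I_N^3}|\hat\rho(\bk)|^2
=\sum_{j,l}\rho_j\rho_l\,\frac{1}{N^3}\sum_{\bk} e^{\frac{2\pi}{L}\i\,\bk\cdot(\bx_j-\bx_l)} .
\end{align*}
The diagonal terms $j=l$ contribute exactly $\|\bm\rho\|_2^2$. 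For distinct particles in $\Omega$, and after restricting to generic positions or taking a Cesàro-type limit, the off-diagonal terms tend to zero. The average therefore tends to $\|\bm\rho\|_2^2>0$, so some $\bk$ has $|\hat\rho(\bk)|^2>0$, and in fact the supremum is at least of order $\|\bm\rho\|_2^2$. Hence $C_\rho>0$.

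The main (mild) obstacle is this positivity and limit argument. The upper bound itself is immediate from Cauchy--Schwarz. For the purposes of the paper, the essential content is that $C_\rho\le n$ always holds, while for uniformly random points one expects $C_\rho=\mathcal O(1)$ in mean square, since $\mathbb E|\hat\rho(\bk)|^2=\|\bm\rho\|_2^2$ for $\bk\neq\bm 0$.
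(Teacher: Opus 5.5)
Your upper bound is essentially the paper's argument: Cauchy--Schwarz with unimodular phases, then $C_\rho$ defined as the best constant. Writing $\sup$ instead of the paper's $\max$ is the more careful choice, since $\Z^3$ is infinite and the supremum need not be attained.

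The difference is the lower bound. The paper stops after the upper bound and never justifies $C_\rho>0$. Your averaging argument supplies this and gives more. Write $\bm d=\bx_j-\bx_l$. The average over $\mathcal I_N^3$ factorizes into three normalized one-dimensional Dirichlet sums, each of modulus at most one. The factor for a coordinate with $d_i\neq 0$ is bounded by $1/(N|\sin(\pi d_i/L)|)$, because $d_i\in(-L,L)\setminus\{0\}$. So the off-diagonal terms vanish for \emph{every} pair of distinct particles. Your hedge about generic positions or an extra Ces\`aro limit is therefore unnecessary, and the average converges to exactly $\|\bm\rho\|_2^2$. This gives $C_\rho\ge 1$, which is the lower bound that Proposition~\ref{prop:PSWF_split_errors} actually quotes. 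Distinctness is essential and should be stated as a hypothesis. Two coincident particles with charges $\pm1$ give $\hat\rho\equiv 0$ and hence $C_\rho=0$, a case the paper's lemma silently excludes.

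Your closing heuristic, like Remark~\ref{rem:rho}, should not be read as a statement about $C_\rho$ itself. The identity $\mathbb{E}|\hat\rho(\bk)|^2=\|\bm\rho\|_2^2$ controls individual modes, not the supremum over $\Z^3$. For almost every configuration the numbers $1,(\bx_1)_1/L,\dots,(\bx_n)_1/L$ are rationally independent. Kronecker's theorem then gives some $\bk=(k,0,0)$ making every phase $e^{\frac{2\pi}{L}\i k(\bx_j)_1}$ close to $\mathrm{sgn}(\rho_j)$. Hence $C_\rho=(\sum_j|\rho_j|)^2/\|\bm\rho\|_2^2$, which is of order $n$ for random charges. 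The lemma is unaffected, but the $\mathcal{O}(1)$ picture concerns mode-averaged energies, not the constant the lemma defines.
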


\begin{proof}
By the definition of the discrete Fourier transform, we have
\begin{align}
\hat{\rho}(\bk) = \sum_{j=1}^n \rho_j \, e^{\frac{2\pi}{L} \i\, \bk \cdot \bx_j}.
\end{align}
Applying the Cauchy--Schwarz inequality and using that each exponential factor has modulus one, we obtain
\begin{align}
|\hat{\rho}(\bk)|^2 \leq 
\Big( \sum_{j=1}^n |\rho_j|^2 \Big)
\Big( \sum_{j=1}^n \big| e^{\frac{2\pi}{L}\i \bk \cdot \bx_j} \big|^2 \Big)
= n \, \|\bm{\rho}\|_2^2.
\end{align}
Thus
\begin{align}
C_\rho := \max_{\bk\in\Z^3} \frac{|\hat{\rho}(\bk)|^2}{\|\bm{\rho}\|_2^2} \leq n,
\end{align}
which proves the result.
\end{proof}

\begin{remark}\label{rem:rho}
The constant $C_{\rho}$ reflects the degree of cancellation among the oscillatory exponential terms. 
In the worst case (highly clustered particles), one may have $C_\rho$ as large as $n$. 
For well-distributed configurations (e.g., approximately uniform particle distributions), the exponential vectors corresponding to distinct wavenumbers are nearly orthogonal, leading to substantial cancellations. 
In such cases $C_\rho\approx 1$.
\end{remark}

The following proposition provides an asymptotic bound on the Fourier-tail error induced by the PSWF kernel split.

\begin{proposition}\label{prop:PSWF_split_errors}
Assume that $c_s\ge\sqrt{3}$. Then for $r_c/c_s$ sufficiently small,
\begin{align}\label{eq:splitbnd}
\frac{1}{V}\,\|\phi^{\mathrm{far}}-\phi_t^{\mathrm{far}}\|_{L^2(\Omega)}^2
\le
\frac{8C_\rho\,\|\bm{\rho}\|_2^2\,r_c}{3V c_s}
\left(\frac{\psi_0^{c_s}(1)}{\psi_0^{c_s}(0)}\right)^2
\left(1+\O\!\left(\frac{r_c}{c_s}\right)\right),
\end{align}
where $1\leq C_{\rho}\leq n$ is the concentration constant from Lemma~\ref{lemma:rho}.
\end{proposition}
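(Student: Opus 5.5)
The plan is to compute the truncation error exactly with Parseval's identity on the torus, bound each term using Lemma~\ref{lemma:rho} and the PSWF tail bound of Lemma~\ref{lem:tailbnd}, and finally replace the resulting lattice sum by a radial integral.

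\textbf{Step 1 (exact error).} By \eqref{eq:fcoeffs}, the full far field is the Fourier series $\phi^{\mathrm{far}}(\bx)=\frac1V\sum_{\bk\neq\bm 0}\hat M(\bk)\hat\rho(\bk)e^{-\frac{2\pi}{L}\i\bk\cdot\bx}$. Its truncation \eqref{eq:Fsum} keeps only $\bk\in\mathcal I_{\bm m}$, so the difference $\phi^{\mathrm{far}}-\phi^{\mathrm{far}}_t$ is the same series restricted to $\bk\notin\mathcal I_{\bm m}$. Parseval's identity on $\Omega$ then gives
\begin{align*}
\frac1V\|\phi^{\mathrm{far}}-\phi^{\mathrm{far}}_t\|^2_{L^2(\Omega)}=\frac1{V^2}\sum_{\bk\notin\mathcal I_{\bm m}}|\hat M(\bk)|^2|\hat\rho(\bk)|^2 .
\end{align*}
Lemma~\ref{lemma:rho} bounds every $|\hat\rho(\bk)|^2$ by $C_\rho\|\bm\rho\|_2^2$, so it remains to bound $\sum_{\bk\notin\mathcal I_{\bm m}}|\hat M(\bk)|^2$.

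\textbf{Step 2 (pointwise bound on $\hat M$).} By \eqref{eq:MhatPSWF} and \eqref{eq:PSWF_mollifier_ft},
\begin{align*}
|\hat M(\bomega)|=\frac{4\pi}{\omega^2}\,\frac{|\psi_0^{c_s}(r_c\omega/c_s)|}{\psi_0^{c_s}(0)} .
\end{align*}
I will work in the regime where the retained modes cover the main lobe, that is, $\omega_{\max}\ge c_s/r_c$. This is presumably the intended meaning of ``$r_c/c_s$ sufficiently small'' together with the parameter choice. Under this assumption every discarded $\bk$ satisfies $|\bk|\ge K_{\max}$, hence $\omega\ge c_s/r_c$, so the argument $s=r_c\omega/c_s$ of $\psi_0^{c_s}$ is at least $1$.

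Since $c_s\ge\sqrt3$, Lemma~\ref{lem:tailbnd} applies and gives
\begin{align*}
|\hat M|^2\le \frac{16\pi^2}{\omega^4}\Big(\frac{\psi_0^{c_s}(1)}{\psi_0^{c_s}(0)}\Big)^2\frac{c_s^2}{r_c^2\omega^2}.
\end{align*}

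\textbf{Step 3 (sum to integral).} The complement of the cube $\mathcal I_{\bm m}$ is contained in the complement of the ball of radius $K_{\max}$. The summand is radially decreasing in $\omega$, so the lattice sum is compared with $(L/2\pi)^3\int_{|\bomega|\ge c_s/r_c}\,\d\bomega$, up to a relative correction. Passing to spherical coordinates, the integral becomes
\begin{align*}
\frac{1}{V^2}\frac{V}{8\pi^3}\cdot 16\pi^2\cdot 4\pi\,\frac{c_s^2}{r_c^2}\Big(\frac{\psi_0^{c_s}(1)}{\psi_0^{c_s}(0)}\Big)^2\int_{c_s/r_c}^\infty \omega^{-4}\,\d\omega=\frac{8r_c}{3Vc_s}\Big(\frac{\psi_0^{c_s}(1)}{\psi_0^{c_s}(0)}\Big)^2 .
\end{align*}
Multiplying by $C_\rho\|\bm\rho\|_2^2$ yields exactly the leading term of \eqref{eq:splitbnd}.

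\textbf{Main obstacle.} The delicate step is making the sum-to-integral comparison rigorous, i.e.\ producing the factor $1+\O(r_c/c_s)$. I would first try a standard cell argument. Assign to each lattice point its cube of side $2\pi/L$ in $\bomega$-space, and bound the summand by its value at the inner corner of that cube. The union of these cubes covers $\{|\bomega|\ge c_s/r_c-\sqrt3\,2\pi/L\}$. The resulting relative error is therefore $\O\bigl((2\pi/L)/(c_s/r_c)\bigr)$, which is $\O(r_c/c_s)$ for fixed $L$. This is small precisely when $c_s/r_c$ is large compared with the lattice spacing $2\pi/L$, and it uses the monotone decay $\omega^{-6}$ of the majorant.
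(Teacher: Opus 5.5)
Your proposal is correct and follows the paper's proof essentially step for step: Parseval on the discarded modes, Lemma~\ref{lemma:rho}, the tail bound of Lemma~\ref{lem:tailbnd} under the parameter relation $\omega_{\max}=c_s/r_c$ (which the paper also assumes, implicitly), and a lattice-to-integral comparison in which a Taylor expansion of the shifted lower limit gives the factor $1+\O(r_c/c_s)$. The remaining differences are cosmetic: you shift by $\sqrt3\,2\pi/L$ instead of $\sqrt3\pi/L$, and you correctly use $|\bk|\ge K_{\max}$ where the paper writes $>$; also, in your cell argument the bound should read $f(|\bomega_{\bk}|)\le f(|\bomega|-\sqrt3\pi/L)$ for all $\bomega$ in the centered cell (not a bound by the value at the inner corner), and the union of cells should be contained in, rather than cover, the region of integration.
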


\begin{proof}
By Parseval’s theorem,
\begin{align}
\frac{1}{V}\|\phi^{\mathrm{far}} - \phi_t^{\mathrm{far}}\|_{L^2(\Omega)}^2
&=
\frac{1}{V^2}
\sum_{\bk\notin \mathcal I_{\bm{m}}}
|\hat{\rho}(\bk)|^2\,|\hat M(\bomega(\bk))|^2\nonumber\\
&\le
\frac{1}{V^2}
\sum_{|\bk|>K_{\max}}
|\hat{\rho}(\bk)|^2\,|\hat M(\bomega(\bk))|^2.
\end{align}
Applying Lemma~\ref{lemma:rho} yields
\begin{align}\label{eq:L2est-main}
\frac{1}{V}\|\phi^{\mathrm{far}} - \phi_t^{\mathrm{far}}\|_{L^2(\Omega)}^2
\le
\frac{C_\rho \|\bm{\rho}\|_2^2}{V^2}
\sum_{|\bk|>\Kmax} |\hat M(\bomega(\bk))|^2.
\end{align}
It therefore remains to estimate the tail sum.

Recall that
\begin{align}
\hat{M}(\bomega(\bk))
=
\frac{4\pi}{|\bomega(\bk)|^{2}}
\frac{\psi_0^{c_s}(r_c |\bomega(\bk)|/c_s)}{\psi_0^{c_s}(0)} .
\end{align}
For $|\bk|>\Kmax$ we have $|\bomega(\bk)|>\omega_{\max}=c_s/r_c$, and hence $r_c|\bomega(\bk)|/c_s>1$.
Therefore, by Lemma \ref{lem:tailbnd}, which gives $|\psi_0^{c_s}(s)|\le \psi_0^{c_s}(1)/s$ for $s\ge 1$ and $c_s\ge\sqrt{3}$, we obtain
\begin{align}
|\hat M(\bomega(\bk))|
\le
\frac{4\pi c_s}{r_c}
\frac{\psi_0^{c_s}(1)}{\psi_0^{c_s}(0)}\frac{1}{|\bomega(\bk)|^3}.
\end{align}
Squaring gives
\begin{align}
|\hat M(\bomega(\bk))|^2
\le
\frac{16\pi^2c_s^2}{r_c^2}
\left(\frac{\psi_0^{c_s}(1)}{\psi_0^{c_s}(0)}\right)^2
\frac{1}{|\bomega(\bk)|^6}.
\end{align}
Summing over the lattice points $\bomega(\bk)=(2\pi/L)\bk$ with $|\bk|>\Kmax$ and estimating the resulting discrete sum by a radial integral via Lemma~\ref{lem:cubecover}, we obtain
\begin{align}
\sum_{|\bk|>\Kmax} |\hat M(\bomega(\bk))|^2
&\le
\frac{16\pi^2c_s^2}{r_c^2}
\left(\frac{\psi_0^{c_s}(1)}{\psi_0^{c_s}(0)}\right)^2
\left(\frac{L}{2\pi}\right)^3
\int_{|\bomega|>\omega_{\max}-\tfrac{\sqrt{3}\pi}{L}}
\frac{1}{|\bomega|^6}\,\d\bomega \nonumber\\
&=
\frac{8V c_s^2}{r_c^2}
\left(\frac{\psi_0^{c_s}(1)}{\psi_0^{c_s}(0)}\right)^2
\int_{\omega_{\max}-\tfrac{\sqrt{3}\pi}{L}}^\infty \frac{1}{\omega^{4}}\,\d\omega \nonumber\\
&\le
\frac{8Vc_s^2}{3r_c^2}
\left(\frac{\psi_0^{c_s}(1)}{\psi_0^{c_s}(0)}\right)^2
\frac{1}{(\omega_{\max}-\tfrac{\sqrt{3}\pi}{L})^3}.\label{eq:hatMsumbound}
\end{align}
In the equality step we passed to spherical coordinates. Since $\omega_{\max}=c_s/r_c$, a Taylor expansion yields
\begin{align}
\frac{1}{(\omega_{\max}-\tfrac{\sqrt{3}\pi}{L})^3}
=
\frac{r_c^3}{c_s^3}
\left(1+\O\!\left(\frac{r_c}{c_s}\right)\right).
\end{align}
Substituting this estimate into \eqref{eq:hatMsumbound} gives
\begin{align}
\sum_{|\bk|>\Kmax} |\hat M(\bomega(\bk))|^2
\le
\frac{8Vc_s^2}{3r_c^2}
\left(\frac{\psi_0^{c_s}(1)}{\psi_0^{c_s}(0)}\right)^2
\frac{r_c^3}{c_s^3}
\left(1+\O\!\left(\frac{r_c}{c_s}\right)\right).
\end{align}
Finally, inserting this bound into \eqref{eq:L2est-main} completes the proof.
\end{proof}

\begin{remark}
A more refined heuristic than the bound \eqref{eq:splitbnd} is obtained by assuming that
\begin{align}\label{eq:psiapprox}
|\psi_{0}^{c_s}(s)|
\approx
\frac{2\psi_0^{c_s}(1)}{\lambda_0}
\frac{|\sin(c_s s)|}{c_s s},
\qquad s>1,
\end{align}
as suggested by Lemma~\ref{lem:exp}. Using $|\sin(c_s s)|\le 1$ and the lower bound
$\lambda_0 > \sqrt{2\pi/c_s}$ yields
\begin{align}
|\psi_{0}^{c_s}(s)|
\lesssim
\sqrt{\frac{2}{\pi c_s}}
\frac{\psi_0^{c_s}(1)}{s},
\qquad s>1.
\end{align}
Replacing $|\psi_0^{c_s}(s)|\le \psi_{0}^{c_s}(1)/s$ by this heuristic estimate in the proof of Proposition~\ref{prop:PSWF_split_errors} leads to the approximation
\begin{align}\label{eq:splitapprox}
\frac{1}{V}\|\phi^{\mathrm{far}}-\phi_t^{\mathrm{far}}\|_{L^2(\Omega)}^2
\approx
\frac{16\,C_{\rho}\|\bm{\rho}\|_2^2\,r_c}{3\pi V\,c_s^2}
\left(\frac{\psi_0^{c_s}(1)}{\psi_0^{c_s}(0)}\right)^2.
\end{align}
Compared with \eqref{eq:splitbnd}, the heuristic estimate \eqref{eq:splitapprox}
gains an additional factor of $c_s^{-1}$.
\end{remark}

\subsubsection{A closed form approximation for the split truncation error}
\label{sec:spliterror}

For practical parameter selection, we use the heuristic model \eqref{eq:splitapprox}.
We approximate the ratio $\psi_0^{c_s}(1)/\psi_0^{c_s}(0)\approx A_2\, c_s^{1/2} e^{-c_s}$, where $A_2\approx 3.42$, obtained in \eqref{eq:asymp3} (see Appendix~\ref{sec:PSWF_appr}). This yields
\begin{align}\label{eq:PSWF_model}
\frac{1}{\sqrt{V}} \|\phi^{\mathrm{far}} - \phi^{\mathrm{far}}_t\|_{L^2(\Omega)}
\approx A_s \frac{\|\bm{\rho}\|_2}{\sqrt{V}} 
\sqrt{r_c}\, c_s^{-1/2} e^{-c_s},
\end{align}
where
\begin{align}
A_s := \frac{4}{\sqrt{3\pi}}\sqrt{C_{\rho}}\,A_2 \approx 4.46\sqrt{C_{\rho}}.
\end{align}
Assuming that the particle positions are reasonably well distributed (i.e., without significant clustering), we expect $C_{\rho}>1$, but close to $1$, as discussed in Remark~\ref{rem:rho}. 

Since $C_{\rho}$ is not known precisely, we treat $A_s$ as a mildly adjustable prefactor. Numerical experiments (see Figure~\ref{fig:n_rc_dependency}) indicate that the choice $A_s\approx 5$ provides a good fit in practice, corresponding to $\sqrt{C_{\rho}}\approx 1.1$.

\subsection{Aliasing errors due to window function approximations}\label{sec:windowfunerrors}

Recall that, in fast Ewald summation, the discrete Fourier transform steps are approximated by convolution with a window function (spreading and interpolation), followed by application of the FFT. These components are closely related to those used in nonuniform FFTs (NUFFTs). Window functions may either be truncated or compactly supported by construction, and the errors introduced by these approximations contribute to the total error together with the split error analyzed in Section \ref{sec:dirsumerrors}.

In the NUFFT literature, one-dimensional error estimates for various window functions have been derived in
\cite{steidlNoteFastFourier1998,pottsFastFourierTransforms2001,barnettAliasingError$expbeta2021,beylkinFastFourierTransform1995}.
A key parameter in these analyses is the oversampling factor, which controls the magnitude of aliasing errors. Fast Ewald methods, however, are typically implemented without oversampling \cite{lindboSpectralAccuracyFast2011,shamshirgarFastEwaldSummation2021}, and we adopt that convention here. Moreover, fast Ewald summation is inherently a three-dimensional problem.

The sources of error arising from the NUFFT components are analogous to those in fast Ewald summation. In practice, all window functions are truncated. As discussed in Section \ref{sec:compwindow} for the Gaussian window, the use of the untruncated Fourier transform introduces additional errors in the spreading and interpolation steps.
 These are commonly referred to as \emph{truncation errors}. A second source of error arises from \emph{aliasing}, which is caused by discrete sampling of the window function. Sampling induces a periodization in Fourier space via the Poisson summation formula, leading to overlapping frequency contributions \cite{plonkaNumericalFourierAnalysis2018}.

The PSWF window function is compactly supported by definition and admits an exact Fourier representation for all $\bk\in\mathcal{I}_{\bm{m}}$. Consequently, window truncation errors are absent, and only aliasing errors need to be analyzed. To avoid ambiguity in the more general results, we retain the subscript $t$ and write $\tilde{\varphi}_t$ (like we define the PSWF window in Section \ref{sec:PSWFwindow}) to emphasize that the window is both periodic and compactly supported.

From the NUFFT literature, it is known that aliasing errors in the spreading and interpolation steps admit similar estimates
\cite{steidlNoteFastFourier1998}. Therefore, we restrict our attention to
aliasing errors arising from the interpolation step.

Recall that the interpolation step in Algorithm~\ref{alg:SE_Fourier}
approximates a truncated function on the form
\begin{align}\label{eq:interpol_1}
    f(\bx) = \sum_{\bk\in \mathcal{I}_{\bm{m}}} \hat{f}_{\bk}\,
    e^{-\frac{2\pi}{L}\,\i\, \bk\cdot \bx}.
\end{align}
It is approximated using translates (convolution) of a window function,
\begin{align}\label{eq:interpol_2}
    f(\bx) \approx f_h(\bx) :=
    \sum_{\bm{l}\in \mathcal{I}_{\bm{m}}} g_{\bm{l}}\,
    \tilde{\varphi}_t(\bx-h\bm{l})
    = \sum_{\bk\in\Z^3}\hat{g}_{\bk}\,c_{\bk}(\tilde{\varphi}_t)\,
      e^{-\frac{2\pi}{L}\,\i\, \bk\cdot\bx}.
\end{align}
Here $\hat g_{\bk}$ denotes the discrete Fourier coefficients of the grid values
$\{g_{\bm l}\}_{\bm l\in\mathcal I_{\bm m}}$ on the $\bm m$-grid, extended
periodically in frequency, so that
\begin{align}
    \hat g_{\bk+m\bm r} = \hat g_{\bk},
    \qquad \bk\in\mathcal I_{\bm m},\ \bm r\in\Z^3.
\end{align}
We choose
\begin{align}\label{eq:interpol_3}
    \hat{g}_{\bk} :=
        \dfrac{\hat{f}_{\bk}}{c_{\bk}(\tilde{\varphi}_t)},\quad
        \bk\in \mathcal{I}_{\bm{m}},
\end{align}
with the convention that $c_{\bk}(\tilde{\varphi}_t)\neq 0$ for
$\bk\in\mathcal{I}_{\bm{m}}$.
This choice can be interpreted as a deconvolution by
$c_{\bk}(\tilde{\varphi}_t)$ and is the one employed in Algorithm \ref{alg:SE_Fourier}. It has been shown to be accurate even in the presence of
aliasing \cite{nestlerAutomatedParameterTuning2016}. This leads to the
following result.

\begin{lemma}[Aliasing error relation]\label{lem:aliasing}
Suppose that $f$ and $f_h$ are given by \eqref{eq:interpol_1} and
\eqref{eq:interpol_2}, with $\hat{g}_{\bk}$ defined by
\eqref{eq:interpol_3}. Assume further that
$c_{\bk}(\tilde{\varphi}_t)\neq 0$ for all $\bk\in\mathcal{I}_{\bm{m}}$.
Then
\begin{align}\label{eq:aliasing_err_1}
    \frac{1}{V}\|f-f_h\|_{L^2(\Omega)}^2
    \;=\; \sum_{\bk\in\mathcal{I}_{\bm{m}}}|\hat{f}_{\bk}|^2
    \sum_{\bm{r}\in\Z^3\setminus\{\bm{0}\}}
    \frac{|c_{\bk+m\bm{r}}(\tilde{\varphi}_t)|^2}
         {|c_{\bk}(\tilde{\varphi}_t)|^2}.
\end{align}
\end{lemma}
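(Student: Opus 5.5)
The argument is a Fourier-series computation: write $f_h$ as a Fourier series on $\Omega$, subtract $f$ mode by mode, and apply Parseval. I would proceed in four steps.

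\textbf{Step 1 (Fourier series of $f_h$).} Starting from $f_h(\bx)=\sum_{\bm l\in\mathcal I_{\bm m}} g_{\bm l}\,\tilde\varphi_t(\bx-h\bm l)$, I insert the Fourier series of the periodized window from \eqref{eq:fcoeffs}:
\begin{align*}
\tilde\varphi_t(\bx-h\bm l)=\sum_{\bk\in\Z^3} c_{\bk}(\tilde\varphi_t)\,e^{-\frac{2\pi}{L}\i\,\bk\cdot(\bx-h\bm l)}.
\end{align*}
The sign of the exponent matches \eqref{eq:interpol_2}; since $\tilde\varphi_t$ is real and even, $c_{\bk}=c_{-\bk}$, so the sign convention is harmless. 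Exchanging the finite $\bm l$-sum with the $\bk$-sum gives coefficients $\hat g_{\bk}c_{\bk}(\tilde\varphi_t)$ with $\hat g_{\bk}=\sum_{\bm l}g_{\bm l}e^{\frac{2\pi}{L}\i\,\bk\cdot h\bm l}$. Because $h=L/m$, the factor $e^{\frac{2\pi}{L}\i\,(m\bm r)\cdot h\bm l}=e^{2\pi\i\,\bm r\cdot\bm l}=1$, which gives the periodicity $\hat g_{\bk+m\bm r}=\hat g_{\bk}$ and hence the representation in \eqref{eq:interpol_2}. The exchange of sums is justified in $L^2(\Omega)$: $\tilde\varphi_t\in L^2(\Omega)$, so its coefficients are square summable, and a finite sum of translates stays in $L^2$.

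\textbf{Step 2 (reindexing).} Every $\bk'\in\Z^3$ is written uniquely as $\bk'=\bk+m\bm r$ with $\bk\in\mathcal I_{\bm m}$ and $\bm r\in\Z^3$. With the choice \eqref{eq:interpol_3}, the coefficient of $e^{-\frac{2\pi}{L}\i(\bk+m\bm r)\cdot\bx}$ in $f_h$ is $\hat f_{\bk}\,c_{\bk+m\bm r}(\tilde\varphi_t)/c_{\bk}(\tilde\varphi_t)$. For $\bm r=\bm 0$ this equals $\hat f_{\bk}$, which exactly reproduces the corresponding coefficient of $f$. The function $f$ has no modes outside $\mathcal I_{\bm m}$. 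Therefore $f-f_h$ consists only of the modes with $\bm r\neq\bm0$, with coefficients $-\hat f_{\bk}\,c_{\bk+m\bm r}/c_{\bk}$.

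\textbf{Step 3 (Parseval).} The exponentials $e^{-\frac{2\pi}{L}\i\bk'\cdot\bx}$ are orthogonal on $\Omega$ with squared norm $V$, so Parseval gives $\frac1V\|f-f_h\|_{L^2(\Omega)}^2=\sum_{\bk'}|\text{coeff}_{\bk'}|^2$. Distinct pairs $(\bk,\bm r)$ correspond to distinct frequencies $\bk'$, so no cross terms arise, and the sum equals $\sum_{\bk\in\mathcal I_{\bm m}}|\hat f_{\bk}|^2\sum_{\bm r\neq\bm0}|c_{\bk+m\bm r}|^2/|c_{\bk}|^2$. This is \eqref{eq:aliasing_err_1}.

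\textbf{Main obstacle.} The only delicate point is the bookkeeping in Step 1: deriving the Fourier series of $f_h$ with consistent sign conventions, and confirming that the discrete transform of $g$ is $m$-periodic. Convergence is not a real issue, since everything is an $L^2$ identity.
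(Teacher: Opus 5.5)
Your proposal is correct and follows essentially the same route as the paper's proof. Both expand $f_h$ over the aliased frequencies $\bk+m\bm r$, use the periodicity $\hat g_{\bk+m\bm r}=\hat g_{\bk}$ and the deconvolution choice \eqref{eq:interpol_3} to cancel the $\bm r=\bm 0$ band, and then apply Parseval to modes on disjoint frequency sets; the differences are cosmetic: you derive the Fourier series of $f_h$ and the periodicity of $\hat g$ explicitly (the paper takes these as given in \eqref{eq:interpol_2}), and you substitute \eqref{eq:interpol_3} before applying Parseval rather than after.
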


\begin{proof}
We have
\begin{align}
    f_h(\bx) &= \sum_{\bk\in\Z^3}\hat{g}_{\bk}\,c_{\bk}(\tilde{\varphi}_t)\,
      e^{-\frac{2\pi}{L}\,\i\, \bk\cdot\bx} \nonumber\\
    &= \sum_{\bk\in\mathcal{I}_{\bm{m}}}\hat{g}_{\bk}\,c_{\bk}(\tilde{\varphi}_t)\,
      e^{-\frac{2\pi}{L}\,\i\, \bk\cdot\bx}
     + \sum_{\bk\in\mathcal{I}_{\bm{m}}}\sum_{\bm{r}\in\Z^3\setminus\{\bm{0}\}}
       \hat{g}_{\bk+m\bm{r}}\,c_{\bk+m\bm{r}}(\tilde{\varphi}_t)\,
       e^{-\frac{2\pi}{L}\,\i\, (\bk+m\bm{r})\cdot\bx}.\label{eq:alerrrel}
\end{align}
Subtracting \eqref{eq:alerrrel} from \eqref{eq:interpol_1} and using the
periodicity $\hat g_{\bk+m\bm r}=\hat g_{\bk}$ yields
\begin{align}
    f(\bx)-f_h(\bx)
    &= \sum_{\bk\in\mathcal I_{\bm m}}
       \bigl(\hat f_{\bk}-\hat g_{\bk}c_{\bk}(\tilde{\varphi}_t)\bigr)
       e^{-\frac{2\pi}{L}\,\i\,\bk\cdot\bx} \nonumber\\
    &\quad
     - \sum_{\bk\in\mathcal I_{\bm m}}\sum_{\bm r\in\mathbb Z^3\setminus\{\bm0\}}
       \hat g_{\bk}\,c_{\bk+m\bm r}(\tilde{\varphi}_t)
       e^{-\frac{2\pi}{L}\,\i\,(\bk+m\bm r)\cdot\bx}.
\end{align}
The two sums are supported on disjoint frequency sets and are therefore
orthogonal in $L^2(\Omega)$. Parseval’s theorem gives
\begin{align}
    \|f-f_h\|_{L^2(\Omega)}^2
    = V\sum_{\bk\in\mathcal I_{\bm m}}
       \bigl|\hat f_{\bk}-\hat g_{\bk}c_{\bk}(\tilde{\varphi}_t)\bigr|^2
     + V\sum_{\bk\in\mathcal I_{\bm m}}\sum_{\bm r\in\mathbb Z^3\setminus\{\bm0\}}
       |\hat g_{\bk}|^2\,|c_{\bk+m\bm r}(\tilde{\varphi}_t)|^2.
\end{align}
Using \eqref{eq:interpol_3}, the first sum vanishes and
$|\hat g_{\bk}|^2 = |\hat f_{\bk}|^2 / |c_{\bk}(\tilde{\varphi}_t)|^2$.
Substitution yields \eqref{eq:aliasing_err_1}.
\end{proof}
\begin{remark}[Oversampling]\label{rem:aliasing_oversampling}
The coefficients $c_{\bk}(\tilde{\varphi}_t)$ typically decay in magnitude as $|\bk|$ increases, possibly with oscillations (as for PSWF). However, this decay alone does not guarantee suppression of aliasing errors. Indeed, for the specific choice $\bk=(m/2,0,0)$ and $\bm r=(-1,0,0)$, we have $\bk+m\bm r =-\bk$.
For this choice, since the coefficients are even, we have
\begin{align}
\frac{c^2_{\bk+m\bm r}(\tilde{\varphi}_t)}
     {c^2_{\bk}(\tilde{\varphi}_t)} = 1.
\end{align}
Thus, in the absence of oversampling, aliasing errors are not uniformly suppressed by the window function alone. Nevertheless, the diagonal scaling in fast Ewald methods---via the $|\hat f_{\bk}|^2$ factor---provides additional damping.
\end{remark}

Using Lemma \ref{lem:aliasing} we can now estimate the aliasing errors of Algorithm \ref{alg:SE_Fourier} for the PSWF mollifier and the PSWF window function.

\begin{proposition}[Aliasing error in interpolation]\label{prop:interpol_est}
Let $\phi_t^{\mathrm{far}}$ be the truncated Fourier-space far-field potential
produced by Algorithm~\ref{alg:FastEwaldGeneral}, using the PSWF mollifier
$\gamma_{r_c}^{c_s}$, with the Fourier series representation
\begin{align}
    \phi_t^{\mathrm{far}}(\boldsymbol{x})
    = \frac{1}{V}\sum_{\boldsymbol{k}\in\mathcal{I}_{\boldsymbol{m}}\setminus\{\boldsymbol{0}\}}
      \hat M(\boldsymbol{k})\,\hat\rho(\boldsymbol{k})\,
      e^{-\frac{2\pi i}{L}\boldsymbol{k}\cdot\boldsymbol{x}} .
\end{align}
Let $\phi_{t,h}^{\mathrm{far}}$ denote its approximation obtained by the
interpolation step of Algorithm~\ref{alg:SE_Fourier}, using the PSWF window
$\tilde\varphi_t^{c_w}$.

For the purpose of the analysis, we employ the deconvolution choice
\begin{align}
    \hat g_{\boldsymbol{k}}
    := \frac{1}{V}\,
       \frac{\hat M(\boldsymbol{k})\,\hat\rho(\boldsymbol{k})}
            {c_{\boldsymbol{k}}(\tilde\varphi_t^{c_w})},
    \qquad \boldsymbol{k}\in\mathcal{I}_{\boldsymbol{m}},
\end{align}
where $c_{\boldsymbol{k}}(\tilde\varphi_t^{c_w})$ denotes the Fourier series
coefficient of the periodized window.

Then the interpolation error satisfies
\begin{align}\label{eq:interpol_est}
    \frac{1}{V}\,
    \|\phi_t^{\mathrm{far}}-\phi_{t,h}^{\mathrm{far}}\|_{L^2(\Omega)}^2
    \;\le\;
    \frac{C_\rho\,L\,\|\boldsymbol{\rho}\|_2^2}
         {V\pi^2\,\psi_0^{c_s}(0)^2}
    \sum_{\boldsymbol{k}\in\mathcal{I}_{\boldsymbol{m}}\setminus\{\boldsymbol{0}\}}
    \frac{\psi_0^{c_s}\!\left(\tfrac{2}{m}|\boldsymbol{k}|\right)^2
    \displaystyle
    \sum_{\boldsymbol{r}\in\mathbb{Z}^3\setminus\{\boldsymbol{0}\}}
    \prod_{i=1}^3
    \psi_0^{c_w}\!\Big(\tfrac{2}{m}(k_i+m r_i)\Big)^2}
    {|\boldsymbol{k}|^4
    \displaystyle\prod_{i=1}^3
    \psi_0^{c_w}\!\left(\tfrac{2}{m}k_i\right)^2},
\end{align}
where $0<C_\rho\le n$ is the constant from Lemma~\ref{lemma:rho}.
\end{proposition}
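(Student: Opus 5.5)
We apply Lemma~\ref{lem:aliasing} with $f=\phi_t^{\mathrm{far}}$, whose discrete Fourier coefficients on the $\bm m$-grid are $\hat f_{\bk}=V^{-1}\hat M(\bk)\hat\rho(\bk)$ for $\bk\in\mathcal I_{\bm m}\setminus\{\bm 0\}$ and $\hat f_{\bm 0}=0$. The deconvolution choice in the statement is exactly \eqref{eq:interpol_3}. Nonvanishing of $c_{\bk}(\tilde\varphi_t^{c_w})$ on $\mathcal I_{\bm m}$ follows because $|2k_i/m|\le 1$ there and $\psi_0^{c_w}>0$ on $(-1,1)$ with $\psi_0^{c_w}(\pm1)>0$. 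The lemma then gives the identity
\begin{align*}
\frac{1}{V}\|\phi_t^{\mathrm{far}}-\phi_{t,h}^{\mathrm{far}}\|_{L^2(\Omega)}^2
=\frac{1}{V^2}\sum_{\bk\in\mathcal I_{\bm m}\setminus\{\bm 0\}}|\hat M(\bk)|^2|\hat\rho(\bk)|^2
\sum_{\bm r\neq\bm 0}\frac{|c_{\bk+m\bm r}(\tilde\varphi_t^{c_w})|^2}{|c_{\bk}(\tilde\varphi_t^{c_w})|^2}.
\end{align*}
Next we bound $|\hat\rho(\bk)|^2\le C_\rho\|\bm\rho\|_2^2$ by Lemma~\ref{lemma:rho}.

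The remaining work is substituting the explicit transforms. By \eqref{eq:fcoeffs}, $c_{\bk}(\tilde\varphi_t^{c_w})=V^{-1}\hat\varphi^{c_w}(2\pi\bk/L)$. The tensor-product formula gives this as a constant $(\alpha\lambda_0/\psi_0^{c_w}(0))^3V^{-1}$ times $\prod_i\psi_0^{c_w}(2\pi\alpha k_i/(Lc_w))$. This constant is common to numerator and denominator, so it cancels in the ratio.

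The argument $2k_i/m$ in the statement corresponds to the parameter coupling $2\pi\alpha/(Lc_w)=2/m$, i.e.\ $c_w=\pi\alpha m/L$, so that the bandlimit of the window matches the grid Nyquist frequency. Likewise, $\psi_0^{c_s}(2|\bk|/m)$ arises from $r_c|\bomega|/c_s$ with $\omega_{\max}=c_s/r_c$, i.e.\ $r_c\cdot 2\pi/L=2c_s/m$. I will make these identifications explicit; they are the convention used in Proposition~\ref{prop:PSWF_split_errors}, with $K_{\max}=m/2$.

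For the mollifier, \eqref{eq:MhatPSWF} together with \eqref{eq:PSWF_mollifier_ft} gives
\begin{align*}
|\hat M(\bk)|^2=\frac{L^4}{\pi^2|\bk|^4}\frac{\psi_0^{c_s}(2|\bk|/m)^2}{\psi_0^{c_s}(0)^2}.
\end{align*}
Collecting prefactors gives $C_\rho\|\bm\rho\|_2^2L^4/(V^2\pi^2\psi_0^{c_s}(0)^2)$. Since $V=L^3$, this equals $C_\rho L\|\bm\rho\|_2^2/(V\pi^2\psi_0^{c_s}(0)^2)$, which is the claimed constant. The inequality comes only from Lemma~\ref{lemma:rho}.

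The main obstacle is bookkeeping rather than analysis. One must match the scalings (the relations between $c_w,\alpha,m,L$ and between $c_s,r_c,m$) so the arguments come out as $2k_i/m$ and $2|\bk|/m$. For $|2(k_i+mr_i)/m|>1$, $\psi_0^{c_w}$ must be understood via its integral extension \eqref{eq:eigenvalueproblem}. One must also use the sign conventions of \eqref{eq:interpol_2} consistently. Convergence of the $\bm r$-sum follows from the decay bound of Lemma~\ref{lem:tailbnd} (or Lemma~\ref{lem:exp}), which gives $\psi_0^{c_w}(s)^2=\O(s^{-2})$ per coordinate. A triple product of such factors summed over $\Z^3$ converges.
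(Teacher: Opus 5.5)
Your proposal is correct and follows essentially the same route as the paper's proof. Both arguments apply Lemma~\ref{lem:aliasing} to $\phi_t^{\mathrm{far}}$ and bound $|\hat\rho(\bk)|^2$ via Lemma~\ref{lemma:rho}. Both cancel the window normalization constants in the coefficient ratio, use the couplings $r_c=c_sL/(\pi m)$ and $c_w/\alpha=\pi m/L$ to obtain the arguments $2|\bk|/m$ and $2k_i/m$, and simplify the prefactor using $V=L^3$. Your check that $c_{\bk}(\tilde\varphi_t^{c_w})\neq 0$ on $\mathcal{I}_{\bm m}$ and your explicit bandlimit-matching condition $c_w=\pi\alpha m/L$ spell out two points the paper's proof uses only implicitly.
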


\begin{proof}
By Lemma~\ref{lem:aliasing}, applied to $\phi_{t,h}^{\text{far}}$, and using
Lemma~\ref{lemma:rho} together with
$\hat{M}(\bk)=\hat{G}(\bk)\,\hat{\gamma}_{r_c}^{c_s}(|\bk|)$, we obtain
\begin{align}
    \frac{1}{V}\|\phi_t^{\text{far}}-\phi^{\text{far}}_{t,h}\|_{L^2(\Omega)}^2
   &= \frac{1}{V^2}\sum_{\bk\in\mathcal{I}_{\bm{m}}\setminus\{\bm{0}\}}
      \big|\hat{M}(\bk)\,\hat{\rho}(\bk)\big|^2
      \sum_{\bm{r}\in\Z^3\setminus\{\bm{0}\}}
      \frac{c_{\bk+m\bm{r}}^2(\tilde{\varphi}_t^{c_w})}
           {c_{\bk}^2(\tilde{\varphi}_t^{c_w})} \nonumber\\
   &\leq \frac{C_\rho}{V^2}\,\|\bm{\rho}\|_2^2
      \sum_{\bk\in\mathcal{I}_{\bm{m}}\setminus\{\bm{0}\}}
      \big|\hat{G}(\bk)\,\hat{\gamma}_{r_c}^{c_s}(|\bk|)\big|^2
      \sum_{\bm{r}\in\Z^3\setminus\{\bm{0}\}}
      \frac{c_{\bk+m\bm{r}}^2(\tilde{\varphi}_t^{c_w})}
           {c_{\bk}^2(\tilde{\varphi}_t^{c_w})}.
\end{align}
By \eqref{eq:fcoeffs},
$c_{\bk}(\tilde{\varphi}_t^{c_w}) = \tfrac{1}{V}\,\hat{\varphi}_t^{c_w}(\bk)$.
Using $\omega_i/\omega_{\max}=2k_i/m$, this ratio can be written as
\begin{align}
\frac{c_{\bk+m\bm{r}}^2(\tilde{\varphi}_t^{c_w})}
     {c_{\bk}^2(\tilde{\varphi}_t^{c_w})}
= \frac{\displaystyle\prod_{i=1}^3
       \psi_0^{c_w}\!\Big(\tfrac{2}{m}(k_i+m r_i)\Big)^2}
       {\displaystyle\prod_{i=1}^3
       \psi_0^{c_w}\!\Big(\tfrac{2}{m}k_i\Big)^2},
\end{align}
where the normalization constants in $\hat{\varphi}_t^{c_w}$ cancel.
With $\omega = 2\pi|\bk|/L$ and $r_c = c_s L/(\pi m)$ we have
\begin{align}
\frac{r_c\,\omega}{c_s} = \frac{2}{m}|\bk|,
\end{align}
so the PSWF argument simplifies as claimed. Substituting
$\hat{G}(\bk)=L^2/(\pi|\bk|^2)$ and the explicit representation of
$\hat{\gamma}_{r_c}^{c_s}$ yields
\begin{align}
    \frac{1}{V}\|\phi_t^{\text{far}}-\phi_{t,h}^{\text{far}}\|_{L^2(\Omega)}^2
    \leq
    \frac{C_\rho\,L^4\|\bm{\rho}\|_2^2}{V^2\pi^2\psi_0^{c_s}(0)^2}
    \sum_{\bk\in\mathcal{I}_{\bm{m}}\setminus\{\bm{0}\}}
    \left|\frac{\psi_0^{c_s}\!\big(\tfrac{2}{m}|\bk|\big)}{|\bk|^2}\right|^2
    \sum_{\bm{r}\in\Z^3\setminus\{\bm{0}\}}
    \frac{\displaystyle\prod_{i=1}^3
    \psi_0^{c_w}\!\Big(\tfrac{2}{m}(k_i+m r_i)\Big)^2}
         {\displaystyle\prod_{i=1}^3
    \psi_0^{c_w}\!\Big(\tfrac{2}{m}k_i\Big)^2}.
\end{align}
Substituting $V=L^3$ then gives the desired estimate~\eqref{eq:interpol_est}.
\end{proof}

\subsubsection{A closed-form approximation for the aliasing error}
\label{sec:aliasingerr_closedform}

We now derive an approximate model for the aliasing error by selecting the dominant terms in the aliasing sum. This approach is inspired by \cite{liangAcceleratingFastEwald2025}, where a simplified aliasing sum for the relative force error, similar in form to \eqref{eq:aliasing-leading} below, is stated.

In~\eqref{eq:interpol_est}, the aliasing sum is dominated by the first nonzero
terms with $|\bm r|=1$, since the PSWF satisfies
$|\psi_0^{c_w}(s)| \leq \psi_0^{c_w}(1)/s$ for $s \ge 1$ (Lemma~\ref{lem:tailbnd}),
implying fast (algebraic) decay of the shifted coefficients as $|\bm r|$ increases. These six contributions—corresponding to one shift in each
coordinate direction in three dimensions—provide the leading-order
term to the aliasing error. We therefore approximate
\eqref{eq:interpol_est} by retaining only these terms:
\begin{align}
\frac{1}{V}\|\phi_t^{\mathrm{far}}-\phi_{t,h}^{\mathrm{far}}\|^2_{L^2(\Omega)} 
&\approx \frac{C_{\rho}L \|\bm{\rho}\|_2^2}{V\pi^2\psi_0^{c_s}(0)^2}
\sum_{\bm{k}\in\mathcal{I}_{\bm{m}}\setminus\{\bm{0}\}}
\frac{\psi_0^{c_s}\!\left(\tfrac{2}{m}|\bm{k}|\right)^2
\displaystyle \sum_{|\bm{r}|=1}
\prod_{i=1}^3\psi_0^{c_w}\!\Big(\tfrac{2}{m}(k_i+m r_i)\Big)^2}
{|\bm{k}|^4\displaystyle\prod_{i=1}^3\psi_0^{c_w}\!\left(\tfrac{2}{m}k_i\right)^2}.
\label{eq:aliasing-leading}
\end{align}

Since $|\bm{k}|\ge |k_i|$, and since $\psi_0^{c}$ is even and nonincreasing on $[0,1]$ and bounded by $\psi_0^{c}(1)$ on $s>1$ by Lemma~\ref{lem:tailbnd}, we obtain for $\bm{k}=(k_1,k_2,k_3)\in\mathcal{I}_{\bm{m}}\setminus\{\bm{0}\}$,
\begin{align}
\psi_0^{c_s}\!\left(\tfrac{2}{m}|\bm{k}|\right)\le 
\psi_0^{c_s}\!\left(\tfrac{2}{m}|k_i|\right),\qquad i\in\{1,2,3\}.
\label{eq:ratio-le-1}
\end{align}
Equality holds for axis-aligned modes of the form
$\bm{k}=s\,k\,\mathbf e_j$, where $s\in\{\pm1\}$, $j\in\{1,2,3\}$, and
$k\in\{1,\dots,K_{\max}\}$ with $K_{\max}=\lfloor m/2\rfloor$.
For such modes,
\begin{align}\label{eq:windowapprox}
\frac{\psi_0^{c_s}\!\left(\tfrac{2}{m}|\bm{k}|\right)^2}
     {\prod_{i=1}^3 \psi_0^{c_w}\!\left(\tfrac{2}{m} k_i\right)^2}
&=
\frac{\psi_0^{c_s}\!\left(\tfrac{2}{m}k\right)^2}
     {\psi_0^{c_w}\!\left(\tfrac{2}{m}k\right)^2\psi_0^{c_w}(0)^4}
\;\approx\; \frac{1}{\psi_0^{c_w}(0)^4},
\end{align}
where the approximation uses that $c_s\approx c_w$, so that
$\psi_0^{c_s}(2k/m)/\psi_0^{c_w}(2k/m)\approx 1$
for the relevant range of $k$.

For $k_i\in \mathcal{I}_m$ and $r_i\in\{\pm1\}$, we have
\begin{align}
\left|\tfrac{2}{m}(k_i + m r_i)\right| \in [1,3]
\qquad\text{and}\qquad 
\left|\tfrac{2}{m}k_i\right| \in (0,1].
\label{eq:shifted-range}
\end{align}
Hence, for $|\bm{r}|=1$, we obtain
\begin{align}\label{eq:windowbnd}
\prod_{i=1}^3\psi_0^{c_w}\!\Big(\tfrac{2}{m}(k_i+m r_i)\Big)^2 
\;\le\; \psi_0^{c_w}(1)^2\,\psi_0^{c_w}(0)^4.
\end{align}
Equality occurs (for even $m$) in the extremal axis-aligned case where
$\bigl|\tfrac{2}{m}(k_j+m r_j)\bigr|=1$, i.e., $|k_j|=K_{\max}$ and
$r_j=-\operatorname{sgn}(k_j)$.

Using \eqref{eq:windowapprox} and \eqref{eq:windowbnd}, and retaining only the axis-aligned modes, we obtain the approximation
\begin{align}
\sum_{\bm{k}\in\mathcal{I}_{\bm{m}}\setminus\{\bm{0}\}}
\frac{\psi_0^{c_s}\!\left(\tfrac{2}{m}|\bm{k}|\right)^2\displaystyle \sum_{|\bm{r}|=1}
\prod_{i=1}^3\psi_0^{c_w}\!\Big(\tfrac{2}{m}(k_i+m r_i)\Big)^2}
{|\bm{k}|^4\displaystyle\prod_{i=1}^3\psi_0^{c_w}\!\left(\tfrac{2}{m}k_i\right)^2}
&\approx
6\,\psi_0^{c_w}(1)^2\sum_{k=1}^{K_{\max}}\frac{1}{k^4}
\;\lesssim\; \psi_0^{c_w}(1)^2\,\frac{\pi^4}{15},
\label{eq:axis-bound}
\end{align}
where the factor $6$ accounts for three coordinate axes and two signs, and
$\sum_{k\ge1}k^{-4}=\pi^4/90$.

Substituting~\eqref{eq:axis-bound} into~\eqref{eq:aliasing-leading} yields the
dimensionless prefactor $C_{\rho}\pi^2/15$, which is of
order unity and independent of $m$, $L$, and $c_w$. We thus arrive at the
approximation
\begin{align}
\frac{1}{\sqrt{V}}
\|\phi_t^{\mathrm{far}}-\phi_{t,h}^{\mathrm{far}}\|_{L^2(\Omega)}
\approx
\frac{\|\bm{\rho}\|_2}{\sqrt{V}}
\frac{\pi}{\sqrt{15}}\,\sqrt{C_{\rho}L}\,
\frac{\psi_0^{c_w}(1)}{\psi_0^{c_w}(0)}.
\end{align}

Approximating $\psi_0^{c_w}(1)/\psi_0^{c_w}(0)\approx A_2\,c_w^{1/2}e^{-c_w}$, with $A_2\approx 3.42$ from \eqref{eq:asymp3} (Appendix~\ref{sec:PSWF_appr}), we obtain
\begin{align}\label{eq:window_err_model}
\frac{1}{\sqrt{V}}
\|\phi_t^{\mathrm{far}}-\phi_{t,h}^{\mathrm{far}}\|_{L^2(\Omega)}
\approx
A_w\,\frac{\|\bm{\rho}\|_2}{\sqrt{V}}\,
\sqrt{L}\,c_w^{1/2}e^{-c_w},
\end{align}
where
\begin{align}
A_w := \frac{\pi}{\sqrt{15}}\,\sqrt{C_{\rho}}\,A_2 \approx 2.8\sqrt{C_{\rho}}.
\end{align}
As for the split-model \eqref{eq:PSWF_model}, we use the same value of $\sqrt{C_\rho}$, treated as a mildly adjustable factor; numerical experiments indicate $\sqrt{C_\rho}\approx 1.1$ for finite, well-distributed particle configurations, yielding $A_w\approx 3.1$.




\section{Parameter selection}
\label{sec:parameter_selection}

This section presents practical guidelines for selecting the PSWF mollifier and window parameters to achieve a prescribed tolerance~$\varepsilon$. The results rely on the error models of Section~\ref{sec:error_analysis}. Unless otherwise noted, we assume neutral, well-distributed particle configurations, 
no oversampling, and parameters $c_s,c_w$ within the fitted ranges of Table~\ref{tab:curvefits}.

For simplicity, we assume that $m$ is even throughout this section; in the odd case, one can replace $m/2$ by $\lfloor m/2\rfloor$.

\subsection{Notes about nondimensionalization and the cuboid setting}
\label{sec:nondimensionalization}

The Ewald parameters depend only on ratios of characteristic lengths, and are therefore invariant under uniform rescaling of the problem. This allows them to be selected using a normalized cutoff, for example \( r_c/L \) in the cubic case.

For the periodic cubic domain \([0,L)^3\), we can define nondimensional coordinates $\bx^* := \bx/L \in [0,1)^3$. In these variables, the computational domain becomes the unit cube.
Periodic functions may then be expanded in the Fourier basis $e^{2\pi \mathrm{i}\,\bk\cdot \bx^*}$, for $\bk \in \mathbb{Z}^3$, which corresponds in physical coordinates to wavevectors \(2\pi(\bk / L)\). In this setting, it is natural to introduce the dimensionless cutoff
\begin{align}\label{eq:rcnondim}
r_c^* := \frac{r_c}{L}, \qquad 0 < r_c^* < 1,
\end{align}
and to express all algorithmic parameters---such as \(c_s\), \(c_w\), and the grid size \(P\)---in terms of \(r_c^*\) and the target tolerance~\(\varepsilon\).
Once these parameters are fixed, physical quantities are recovered by rescaling lengths by~\(L\).

The free-space Laplace kernel satisfies $G(\bx) = 1/|\bx|$, and under the isotropic scaling \(\bx = L\bx^*\) one has
$G(\bx) = \frac{1}{L}\,G(\bx^*)$.
Accordingly, the periodic potential may be written as
\begin{align}\label{eq:phinondim}
\phi(\bx_i) = \frac{1}{L}\,\phi^*(\bx_i^*),\qquad
\phi^*(\bx_i^*)
= \sum_{\br\in\mathbb{Z}^3} \sum_{j=1}^n{\vphantom{\sum}}'
G(|\bx_i^* - \bx_j^* + \br|)\,\rho_j.
\end{align}
Hence, we can always solve on the unit cube, using a dimensionless $r_c$ that accounts for $L\neq 1$, and in the end scale the potential with~$L$.

\begin{remark}[Cuboid domains]
As noted in the introduction, the method extends naturally to cuboid domains. 
Let \(\Omega = \prod_{j=1}^3 [0,L_j)\) and choose the FFT grid so that each direction 
uses \(m_i\) points with a common grid spacing \(h\) satisfying \(h = L_i/m_i\), \(i=1,2,3\). With \(m_i\) even, this corresponds to representing \(m_i/2\) Fourier modes per direction, and the largest resolved wave number is \(\omega_{\max} = \pi m_i/L_i = \pi/h\). 
After introducing the componentwise scaled coordinates \(\bx^* = \bx/L_{\min}\), where \(L_{\min} := \min \bm{L}\), the nondimensionalization proceeds as in the cubic case. 
In particular, the radial cutoff becomes \(r_c^* = r_c/L_{\min}\), in analogy with \eqref{eq:rcnondim}. The algorithm itself is unchanged, and the Ewald parameters depend only on the relative cutoff size, not on the absolute domain dimensions or its aspect ratio.
\end{remark}

\subsection{PSWF split parameters}

Two parameters control the kernel split: the shape parameter~$c_s$ and the cutoff~$r_c$.
The cutoff balances the real- and Fourier-space costs, while $c_s$ determines the number of Fourier modes.
They are related by (see \eqref{eq:PSWF_mollifier_ft})
\begin{align}\label{eq:csrc}
    \frac{c_s}{r_c} = \omega_{\max} = \frac{2\pi}{L} K_{\max}
    = \frac{\pi m}{L}.
\end{align}
Thus, given $c_s$ and $r_c$, it is reasonable to set $m$ to
\begin{align}\label{eq:m_cs_dimless}
    m \approx \left\lceil \frac{Lc_s}{\pi r_c} \right\rceil .
\end{align}

\subsubsection{Choosing $c_s$}

Using the fitted model~\eqref{eq:PSWF_model}, we choose $c_s$ as
\begin{align}\label{eq:cs_dimless}
    c_s \approx \frac{1}{2}W\Big(2\,\big(B_s/\varepsilon\big)^2\Big),\qquad 
    B_s := \frac{1}{\sqrt{V}}A_s\|\bm{\rho}\|_2 \sqrt{r_c},
\end{align}
where $A_s \approx 5$. 
This choice provides accurate parameter selection across the practically relevant range of $c_s$ and yields close agreement with the observed absolute RMS errors.

A simpler, but less accurate alternative, is based on the asymptotic estimate~\cite{jiangDualspaceMultilevelKernelsplitting,liangAcceleratingFastEwald2025}
\begin{align}
    c_s \approx \log(1/\varepsilon),
\end{align}
which captures the correct exponential scaling and has the advantage of being parameter-free. 
It provides a reasonable estimate of $c_s$, although it may slightly overestimate the required bandwidth for small to moderate values of $c_s$ (see Figure~\ref{fig:abserrors}).

In practice, both choices are effective.

\subsubsection{Choosing $r_c$}

In molecular simulations (e.g., GROMACS~\cite{vanderspoelGROMACSFastFlexible2005}), the real-space cutoff~$r_c$ is often set empirically from physical interaction ranges (e.g., \(9\,\text{\AA}\)) rather than chosen optimally for computational efficiency.
In the present formulation, we suggest that $r_c$ should be treated as a tunable parameter balancing the real- and Fourier-space costs; note that $r_c$ is linked to $m$ through \eqref{eq:m_cs_dimless}.
The optimal value of $r_c$ depends on several factors, such as hardware, software, particle number~$n$, and particle distribution, and is best found empirically by measuring the total runtime for a range of~$r_c$ values.

\subsection{Window parameters}

If both the PSWF mollifier and the PSWF window are used, their scaled bandlimits should match, yielding
\begin{align}\label{eq:bandwidth}
    \frac{c_s}{r_c} = \frac{c_w}{\alpha}.
\end{align}
Hence, given $c_s$, $r_c$, and $c_w$, the window half-width should satisfy
\begin{align}\label{eq:alpha}
\alpha = \frac{r_c c_w}{c_s}.
\end{align}
Moreover, the window half-width is $\alpha = PL/(2m)$, which gives
\begin{align}\label{eq:P_dimless}
P = \biggl\lceil \frac{2 m \alpha}{L}\biggr\rceil.
\end{align}
This relation makes the parameters match closely, also after applying rounding to $m$; see Remark~\ref{rem:parameter} below. However, using \eqref{eq:csrc} (assuming $m$ is even) together with $\alpha = Ph/2$, we also obtain the simple relation between $c_w$ and $P$,
\begin{align}\label{eq:cwP}
    c_w = \frac{\pi}{2} P,
\end{align}
showing clearly that $P$ is proportional to $c_w$.

\subsubsection{Choosing $c_w$}

To choose $c_w$, we assume that the split and window errors are balanced. 
Equating \eqref{eq:window_err_model} and \eqref{eq:PSWF_model} yields
\begin{align}\label{eq:cwcs_model_dimless}
    \sqrt{L}\,A_w\,c_w^{1/2}\, e^{-c_w} = \sqrt{r_c}\,A_{s}\,c_s^{-1/2}e^{-c_s}.
\end{align}
Notably, the constants $A_s$ and $A_w$ share the same factors, so that their ratio is independent of $C_\rho$. 
In particular, we have
\begin{align}
    \frac{A_s}{A_w} = \frac{4\sqrt{5}}{\pi^{3/2}}\approx 1.61.
\end{align}

Solving \eqref{eq:cwcs_model_dimless} for $c_w$ using the Lambert $W$ function gives
\begin{align}\label{eq:cwW}
    c_w \approx -\frac{1}{2} W\!\left(-2 B_w^2 \frac{e^{-2c_s}}{c_s}\right),\qquad 
    B_w := \frac{A_s}{A_w}\sqrt{\frac{r_c}{L}}.
\end{align}

\subsection{Parameter selection procedure}

A typical parameter selection procedure for the PSWF/PSWF method is presented in Algorithm~\ref{alg:parameter} below.

\begin{alg}[Parameter selection procedure for the PSWF/PSWF method]\label{alg:parameter}
\ \\[4pt]
\textit{Comment:} The number of particles $n$, the source strengths $\bm{\rho}$, and the box size $L$ are given.
\smallskip
\begin{algorithmic}[1]
\REQUIRE{Tolerance $\varepsilon$ and cutoff radius $r_c$.}
\ENSURE{$c_s$, $c_w$, $m$, $P$, $\alpha$.}
\smallskip
\STATE Solve $c_s = \frac{1}{2}W \big(2\,(B_s/\varepsilon)^2\big)$ for $c_s$, where $B_s := \frac{1}{\sqrt{V}}A_s\|\bm{\rho}\|_2 \sqrt{r_c}$; alternatively, use $c_s = \log(1/\varepsilon)$.
\STATE Solve $c_w = -\frac{1}{2}W\big(-2 B_w^2 \frac{e^{-2c_s}}{c_s}\big)$ for $c_w$, where $B_w := \frac{A_s}{A_w}\sqrt{\frac{r_c}{L}}$.
\STATE Set $\alpha = r_c c_w/c_s$.
\STATE Set $m = \lceil L c_s /(\pi r_c)\rceil$.
\STATE Set $P = \lceil 2\alpha m/L\rceil$.
\end{algorithmic}
\end{alg}


\begin{remark}\label{rem:parameter}
Algorithm~\ref{alg:parameter} yields an absolute RMS error that closely matches the prescribed tolerance~$\varepsilon$
(see Figure~\ref{fig:errorvstol} in Section~\ref{sec:num_results}). 
This agreement arises because the rounding of $P$ and $m$ via
\eqref{eq:P_dimless} and \eqref{eq:m_cs_dimless} is of comparable relative magnitude and therefore effectively balanced.
An alternative approach to selecting $P$ is to use~\eqref{eq:cwP} and set
$P = \lceil (2/\pi)c_w\rceil$.
However, depending on the extent of the rounding applied to $P$, this choice may result in an effective value of $c_w$ that is too small, and consequently in a computed error that deviates more from $\varepsilon$.
\end{remark}



\section{Numerical results}\label{sec:num_results}

The algorithms were implemented in \textsc{MATLAB}, with PSWFs computed to high accuracy 
using \texttt{Chebfun} and its built-in \texttt{pswf} routine~\cite{chebfun}. For 
general-purpose use, the efficient algorithm described in 
Appendix~\ref{app:pswf} and in \cite{osipovEvaluationProlateSpheroidal2014} can be implemented in any high-performance programming language to evaluate PSWF 
functions. Timing results for a parallel implementation in GROMACS are reported 
in~\cite{liangAcceleratingFastEwald2025}.

\subsection{Experimental setup}

Unless otherwise stated, we consider systems of $n = 100$ particles uniformly 
distributed in $[0,1)^3$, generated using MATLAB's \texttt{rand} function. The particle 
charges $\rho_j$ are sampled independently from a standard normal distribution (mean 
$0$, variance $1$) and subsequently shifted to enforce charge neutrality, 
$\sum_{j} \rho_j = 0$. The choice $n = 100$ is sufficient to capture the 
relevant convergence trends; larger systems exhibit the same slopes and 
offsets; see e.g.~Figure \ref{fig:n_rc_dependency}. Unless otherwise stated, we set the cutoff radius to 
$r_c = 0.1$ and the domain size to $L = 1$, which provides reasonable Fourier-space 
resolution for testing purposes. For the split model, we use $A_s \approx 5$, and for the aliasing error model, $A_w \approx 3.1$, as discussed in Section~\ref{sec:error_analysis}.

We compute the approximate far-field potential values 
$\bm{\phi}^{\mathrm{far}} = (\phi^{\mathrm{far}}_j)_{j=1}^n$ using 
Algorithm~\ref{alg:SE_Fourier} and compare them with a reference solution 
$\bm{\phi}^{\mathrm{far}}_{\mathrm{ref}} =
(\phi^{\mathrm{far}}_{\mathrm{ref},j})_{j=1}^n$, obtained by direct Ewald summation 
(Algorithm~\ref{alg:FastEwaldGeneral}). In the reference calculation, we use a sufficiently large number of Fourier modes so that the truncation error is at least one order of magnitude below the target error level. The Fourier-transformed mollifier is evaluated at all available wavenumbers (also outside the bandlimit)
without additional truncation.

Gaussian window functions are defined as in Table~\ref{tab:reference_windows}. For the 
B-spline results, we employ cardinal B-splines of order $P$ together with the standard 
SPME diagonal scaling \cite[Appendix A]{lindboSpectralAccuracyFast2011}.

Errors are reported either as relative $\ell_2$ errors,
\begin{align}
\frac{\|\bm{\phi}_{\mathrm{ref}}^{\mathrm{far}} - \bm{\phi}^{\mathrm{far}}\|_2}
    {\|\bm{\phi}^{\mathrm{far}}_{\mathrm{ref}}\|_2},
\end{align}
or as absolute root-mean-square (RMS) errors,
\begin{align}
\frac{1}{\sqrt{n}}\,
\|\bm{\phi}_{\mathrm{ref}}^{\mathrm{far}} - \bm{\phi}^{\mathrm{far}}\|_2,
\end{align}
where $\|\bm{a}\|_2 = \sqrt{\sum_{j=1}^n |a_j|^2}$ denotes the standard 
$\ell_2$ vector norm. Absolute RMS errors are used when comparing numerical results 
to the theoretical error models in Section~\ref{sec:error_analysis}, which are 
formulated in terms of absolute errors. Relative $\ell_2$ errors are used when comparing different mollifier and window-function configurations.


\subsection{Direct Ewald sum: PSWF vs.~Gaussian split}
\label{sec:Ewald_summation}

To isolate the effect of the mollifier, we compare the PSWF and Gaussian splits in the
direct summation of the far-field potential (Algorithm~\ref{alg:FastEwaldGeneral}),
thereby eliminating approximation errors associated with FFT-based acceleration. For
this test, the shape parameters are expected to satisfy
\begin{align}\label{eq:directsumexample}
    c_s \;\approx\; (r_c/\sigma)^2 \;\approx\; \log(1/\varepsilon),
\end{align}
so that $\psi_0^{c_s}(1)/\psi_0^{c_s}(0) \approx \varepsilon$ for the PSWF split and
$e^{-(r_c/\sigma)^2} \approx \varepsilon$ for the Gaussian split, where $\varepsilon$
denotes the target relative error for the far-field sum.

For the PSWF split, the optimal number of Fourier modes $m$ per dimension is directly
proportional to the bandlimit $c_s$ through the relation $c_s = \omega_{\max} r_c$,
where $\omega_{\max} = \pi m/L$. In the Gaussian case, \eqref{eq:directsumexample}
provides a reasonable choice to choose $\sigma$. We recall that in the PSWF setting there
are no truncation errors in the real-space sum (see
Section~\ref{sec:PSWF_mollifier}), and the split error is therefore entirely determined
by the error of the far-field sum. In contrast, for the Gaussian split, the choice of
$\sigma$ must be based on the truncation error of the real-space sum, and
\eqref{eq:directsumexample} sets $\sigma$ accordingly.

For each $\sigma$, the error decreases with increasing $m$. To measure the resolution
requirements for a given tolerance $\varepsilon$, we therefore compute the relative
$\ell_2$ error for increasing values of $m$ until the measured error falls below
$\varepsilon$, at which point the corresponding value of
$\omega_{\max} = \pi m/L$ is recorded. The results are shown, together with the errors
computed using $c_s = r_c \omega_{\max}$ for the PSWF split, in
Figure~\ref{fig:split_errors}.

\begin{figure}[tbp]
\centering
\includegraphics[width=0.5\textwidth]{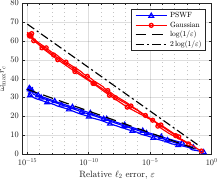}
\caption{Resolution requirements for direct Ewald summation using PSWF and Gaussian
splits. Relative $\ell_2$ errors are shown for cutoff radii
$r_c \in \{0.1, 0.3, 0.5\}$ with $L = 1$ and $n = 100$. Dashed lines indicate reference
slopes $\log(1/\varepsilon)$ (PSWF) and $2\log(1/\varepsilon)$ (Gaussian). Expressing the
resolution as $\omega_{\max} r_c$ causes the results to collapse and confirms the
expected scaling with respect to $r_c$. For practically relevant tolerances, the PSWF
split requires approximately half as many Fourier modes per dimension, corresponding to
an eightfold reduction in three dimensions.}
\label{fig:split_errors}
\end{figure}

Theoretically, by substituting $c_s = r_c\,\omega_{\max}^{\mathrm{PSWF}}$ into
$c_s \approx \log(1/\varepsilon)$, and
$\sigma^2 \approx r_c^2/\log(1/\varepsilon)$ into the Fourier representation of the
Gaussian mollifier,
$e^{-\sigma^2 \omega^2/4} = \varepsilon$ evaluated at
$\omega_{\max}^{\mathrm{Gauss}}$, one obtains the asymptotic relations
\begin{align}
    \omega_{\max}^{\mathrm{PSWF}}
    \sim \frac{\log(1/\varepsilon)}{r_c},
    \qquad
    \omega_{\max}^{\mathrm{Gauss}}
    \sim \frac{2\log(1/\varepsilon)}{r_c}.
\end{align}
Hence, for a given target tolerance,
$\omega_{\max}^{\mathrm{Gauss}} \approx 2\,\omega_{\max}^{\mathrm{PSWF}}$.
Figure~\ref{fig:split_errors} confirms that the empirical error curves follow these
predicted asymptotic trends, implying that the PSWF split requires approximately eight
times fewer Fourier modes than the Gaussian split in three dimensions. The figure also
indicates that the dependence of the relative error on the cutoff radius $r_c$ is weak.

\subsection{Fast Ewald with PSWF split and PSWF window}
\label{sec:PSWF_PSWF_results}

When both the mollifier and the window function are based on PSWFs, the overall 
accuracy depends on two shape parameters: $c_s$, which controls the kernel split, and 
$c_w$, which governs the window function. Proposition~\ref{prop:interpol_est} shows that 
the aliasing error decays rapidly as $c_w$ increases, while 
Proposition~\ref{prop:PSWF_split_errors} characterizes the truncation error associated 
with $c_s$. The total error may be viewed as approximately additive in these two 
contributions (see \eqref{eq:toterror}) and is dominated by the larger of the two.

\begin{figure}[tbp]
\centering
\includegraphics[width=0.95\textwidth]{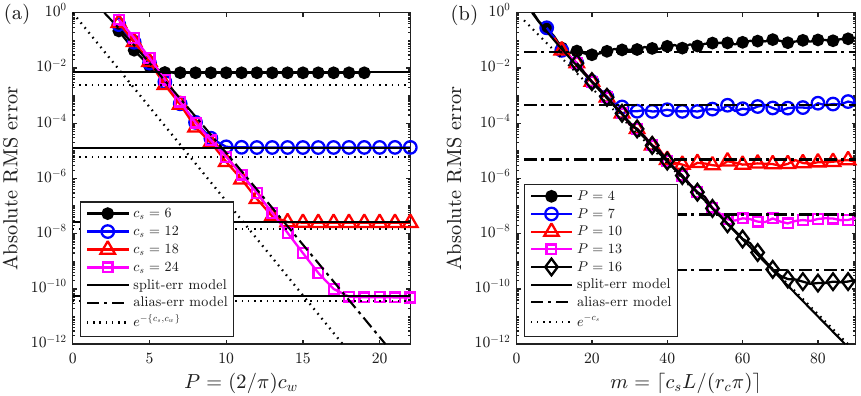}
\caption{Absolute RMS errors for fast Ewald summation 
(Algorithm~\ref{alg:SE_Fourier}) using the PSWF split and PSWF window 
($L = 1$, $r_c = 0.1$, $n = 100$). Curves with symbols show numerical results, while 
black curves indicate theoretical error models. Dotted lines 
indicate the simple asymptotic exponential model $e^{-\{c_s,c_w\}}$; solid lines 
correspond to the split-error model 
$\varepsilon = \tfrac{1}{\sqrt{V}}\sqrt{r_c}\,\|\bm{\rho}\|_2\,A_s c_s^{-1/2} e^{-c_s}$ \eqref{eq:PSWF_model} with $A_s\approx 5$; and 
dash--dotted lines correspond to the aliasing-error model 
$\varepsilon = \tfrac{1}{\sqrt{V}}\sqrt{L}\,\|\bm{\rho}\|_2\,A_w c_w^{1/2} e^{-c_w}$ \eqref{eq:window_err_model} with $A_w\approx 3.1$. 
(a) Absolute RMS error vs.\ window support $P = (2/\pi)c_w$ for fixed $c_s$.
(b) Absolute RMS error vs.\ number of Fourier modes $m$ (equivalently $c_s$) for fixed $P$.}
\label{fig:abserrors}
\end{figure}

Figure~\ref{fig:abserrors}(a) shows that, for fixed $c_s$, the computed absolute RMS error 
decreases as $\mathcal{O}(e^{-c_w})$ until it reaches a plateau determined by the split 
error, which scales as $\mathcal{O}(e^{-c_s})$. The fact that this plateau appears 
perfectly horizontal on a semilogarithmic scale is consistent with the exponential 
decay. 

Conversely, Figure~\ref{fig:abserrors}(b) demonstrates that increasing $c_s$---or, 
equivalently, increasing the number of Fourier modes $m$---reduces the split error until 
the total error saturates at a level $\mathcal{O}(e^{-c_w})$, where 
$c_w = (\pi/2) P$. This saturation level is determined by the aliasing errors introduced 
by the windowing (spreading and interpolation) steps.

We observe that the error curves and plateaus in Figure~\ref{fig:abserrors} are well 
approximated by the error models \eqref{eq:PSWF_model} and \eqref{eq:window_err_model}. 
The theoretical predictions accurately capture both the slopes and the plateau levels. This indicates that the chosen parameter 
values yield computed errors that are close to the prescribed 
tolerance $\varepsilon$.

\begin{figure}[tb]
\centering
\includegraphics[width=0.9\textwidth]{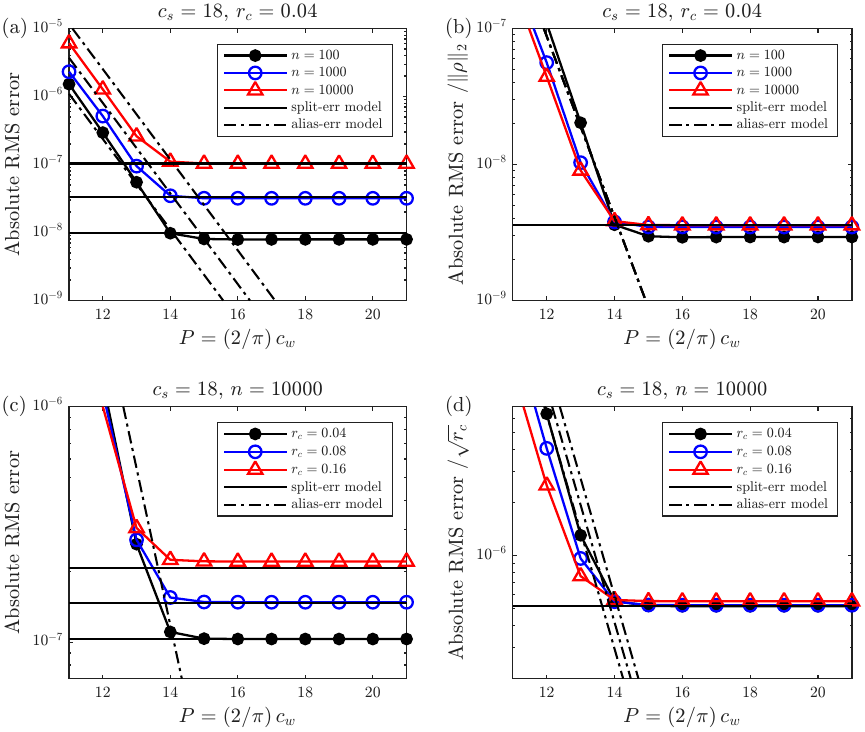}
\caption{Dependence of the absolute RMS error on the system size $n$ and the cutoff 
radius $r_c$ for fast Ewald summation with PSWF split and PSWF mollifier. 
(a) Window support vs.\ absolute RMS error for varying $n$. 
(b) Same data as in (a), rescaled by $\|\bm{\rho}\|_2$. 
(c) Window support vs.\ absolute RMS error for varying $r_c$. 
(d) Same data as in (c), rescaled by $\sqrt{r_c}$. 
Curves with symbols show numerical results, while black curves indicate theoretical error models: 
solid lines correspond to the split-error model 
$\varepsilon = \tfrac{1}{\sqrt{V}}\sqrt{r_c}\,\|\bm{\rho}\|_2\,A_s c_s^{-1/2} e^{-c_s}$ \eqref{eq:PSWF_model}, and 
dash--dotted lines to the aliasing-error model 
$\varepsilon = \tfrac{1}{\sqrt{V}}\sqrt{L}\,\|\bm{\rho}\|_2\,A_w c_w^{1/2} e^{-c_w}$ \eqref{eq:window_err_model}. 
}
\label{fig:n_rc_dependency}
\end{figure}
\begin{figure}[tb]
\centering
\vspace{5pt}
\includegraphics[width=0.95\textwidth]{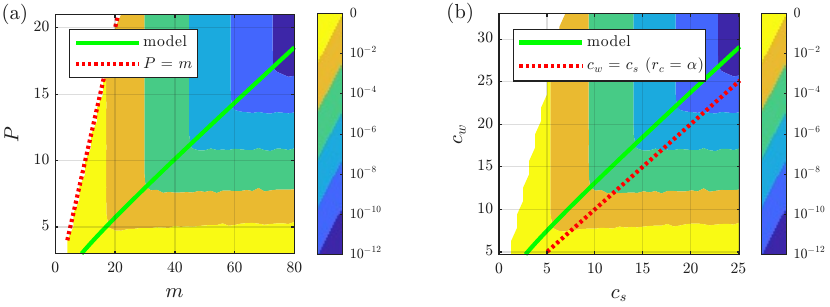} 
\caption{Contour plots of the absolute RMS error ($L = 1$, $r_c = 0.1$, $n = 100$). 
In (a), error in $(m,P)$ space. In (b), error in $(c_s,c_w)$ space. The two parameter spaces are 
related linearly by $c_s = (\pi r_c/L)\,m$ and $c_w = (\pi/2)\,P$. The green curve in each 
panel shows the theoretical optimal relation~\eqref{eq:cwcs_model_dimless}; the computed 
minimizers $(m,P)$ (or equivalently $(c_s,c_w)$) for each error level align closely with 
this curve. For reference, the curve $  P = m$ is also shown, indicating a simple but 
non-optimal parameter choice, as well as the curve $c_w = c_s$, corresponding to the 
case $r_c = \alpha$, which overestimates $c_s$ for a given error level.}
 \label{fig:PSWF_contour} 
\end{figure}

Figures~\ref{fig:n_rc_dependency}(a)--(b) confirm that the absolute RMS error grows  
linearly with $\|\bm{\rho}\|_2$. After rescaling by $\|\bm{\rho}\|_2$, the 
error curves collapse, in agreement with the linearity of the potential. 

Figures~\ref{fig:n_rc_dependency}(c)--(d) further show that the truncation error scales as 
$\sqrt{r_c}$, in agreement with Proposition~\ref{prop:PSWF_split_errors}. Taken 
together, the results in Figure~\ref{fig:n_rc_dependency} confirm that the PSWF-based 
formulation exhibits the expected scaling behavior with respect to both the charge 
amplitude and the cutoff radius.


Figure~\ref{fig:PSWF_contour} shows the absolute RMS error over a grid of 
$(m,P)$ values. Since $P = (2/\pi)\,c_w$ and $c_s = (\pi r_c/L)\,m$, the mapping 
$(m,P) \mapsto (c_s,c_w)$ is linear, and the right panel therefore displays the same data 
as the left panel, but expressed in $(c_s,c_w)$ coordinates. The plots demonstrate that 
the model \eqref{eq:cwcs_model_dimless} is in sharp agreement with the minimal $(m,P)$ 
(or equivalently $(c_s,c_w)$) values corresponding to each error level. The figure also includes the curves $P = m$ and $c_w = c_s$, which represent simple but 
non-optimal choices for parameter selection.

Using the parameter selection procedure described in Algorithm~\ref{alg:parameter}, we aim to achieve a prescribed target accuracy for a given tolerance $\varepsilon$.
Figure~\ref{fig:errorvstol} shows the measured absolute RMS error plotted as functions of $\varepsilon$.
We observe excellent agreement between the prescribed tolerance and the measured error, indicating that the parameter selection procedure performs as intended.

\begin{figure}[tbp]
\centering
\includegraphics[width=0.9\textwidth]{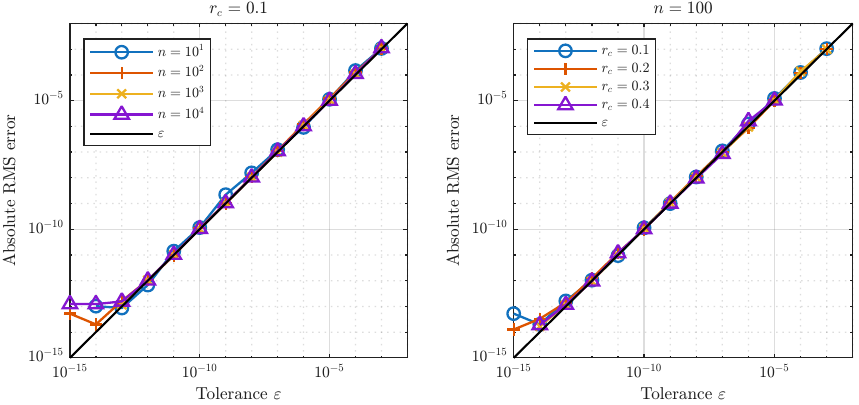}
\caption{Measured absolute RMS error as a function of the requested tolerance $\varepsilon$ for different values of $n$ and $r_c$.}
\label{fig:errorvstol}
\end{figure}

\subsubsection{Comparison of PSWF/PSWF, Gaussian/PSWF, and Gaussian/Gaussian configurations}
\label{sec:pswf_comparisons}

Next, we compare the errors of the PSWF/PSWF (split/window) configuration against the Gaussian/PSWF and Gaussian/Gaussian cases. All three are spectrally accurate Ewald methods with exponential convergence of the error in $m$ and $P$. We treat PSWF/PSWF as our main reference; the Gaussian/PSWF comparison demonstrates the benefit of the PSWF split, while the Gaussian/Gaussian comparison provides context against the original spectral Ewald \cite{lindboSpectralAccuracyFast2011} configuration.

\begin{table}[tbp]
\centering
\caption{Resolution requirements to achieve a prescribed relative $\ell_2$ error for Gaussian (G) and PSWF (P) splits in standard (direct) Ewald summation, and for Gaussian/Gaussian (GG), Gaussian/PSWF (GP), and PSWF/PSWF (PP) configurations in fast Ewald summation.
 The table reports the number of Fourier modes 
$m$, the window support $P$, and the cubic ratios $(m^{X}/m^{\mathrm{PP}})^3$, where 
$X \in \{\mathrm{G}, \mathrm{P}, \mathrm{GG}, \mathrm{GP}, \mathrm{PP}\}$. The shape 
parameters were chosen as 
$c_s = (r_c/\sigma)^2 = \log(1/\varepsilon)$. All results correspond to 
$L = 1$, $r_c = 0.1$, and $n = 100$.}

\label{tab:resolution1}
\footnotesize
\begin{tabularx}{\textwidth}{c|c|c|c|c|c|c|c|c|C}
\toprule
$\varepsilon$ & $\log(\varepsilon^{-1})$ & $m^{\mathrm{G}}$ & $m^{\mathrm{P}}$ &
$\big(\tfrac{m^{\mathrm{G}}}{m^{\mathrm{P}}}\big)^3$ &
$m^{\mathrm{GG}}\!, P^{\mathrm{GG}}$ &
$m^{\mathrm{GP}}\!, P^{\mathrm{GP}}$ &
$m^{\mathrm{PP}}\!, P^{\mathrm{PP}}$ &
$\big(\tfrac{m^{\mathrm{GG}}}{m^{\mathrm{PP}}}\big)^3$ &
$\big(\tfrac{P^{\mathrm{GG}}}{P^{\mathrm{PP}}}\big)^3$ \\
\midrule
$10^{-2}$   & 4.61 & 22  & 13  & 4.8 & \hphantom{1}24, 6\hphantom{0}   & \hphantom{1}24, 5\hphantom{1}   & 13, 5\hphantom{1}  & 6.3 & 1.7\\
$10^{-3}$   & 6.91 & 35  & 20  & 5.4 & \hphantom{1}37, 9\hphantom{0}   & \hphantom{1}35, 6\hphantom{1}   & 20, 6\hphantom{1}  & 6.3 & 3.4\\
$10^{-4}$   & 9.21 & 48  & 27  & 5.6 & \hphantom{1}51, 11  & \hphantom{1}52, 8\hphantom{1}   & 27, 8\hphantom{1}  & 6.7 & 2.6\\
$10^{-5}$   & 11.51 & 62  & 35  & 5.6 & \hphantom{1}65, 13  & \hphantom{1}66, 9\hphantom{1}   & 35, 9\hphantom{1}  & 6.7 & 3.0\\
$10^{-6}$   & 13.82 & 75  & 42  & 5.7 & \hphantom{1}80, 15  & \hphantom{1}79, 10  & 42, 10 & 6.9 & 3.4\\
$10^{-7}$   & 16.12 & 89  & 49  & 6.0 & \hphantom{1}94, 19  & \hphantom{1}94, 12  & 49, 12 & 7.1 & 4.0\\
$10^{-8}$   & 18.42 & 103 & 57  & 5.9 & 109, 20 & 109, 13 & 57, 13 & 7.0 & 3.6\\
$10^{-9}$   & 20.72 & 117 & 64  & 6.1 & 123, 22 & 123, 15 & 64, 15 & 7.1 & 3.2\\
$10^{-10}$  & 23.03 & 131 & 72  & 6.0 & 138, 24 & 138, 16 & 72, 16 & 7.0 & 3.4\\
$10^{-11}$  & 25.33 & 146 & 79  & 6.3 & 154, 25 & 154, 17 & 79, 17 & 7.4 & 3.2\\
$10^{-12}$  & 27.63 & 170 & 86  & 7.7 & 172, 26 & 172, 18 & 86, 18 & 8.0 & 3.0\\
\bottomrule
\end{tabularx}
\end{table}

Table~\ref{tab:resolution1} summarizes the minimum number of Fourier modes required by each configuration to achieve prescribed target accuracies. We choose
$c_s = (r_c/\sigma)^{2} = \log(1/\varepsilon)$ in order to match the truncation levels
for the Gaussian and PSWF functions. We remark that this parameter choice is not optimal for achieving the prescribed error tolerance without additional tuning. However, for the present test, where the goal is simply to select a reasonable shape parameter for comparison with the Gaussian case, it is perfectly adequate. In the table, we report, for each tolerance level $\varepsilon$, the shape parameters used, the smallest number of Fourier modes~$m$, and the minimal window support~$P$ that satisfy the prescribed
tolerance.

For the PSWF/PSWF case, the window support values~$P$ were chosen as the smallest values for which the error corresponding to a given $c_s$ fell below the prescribed tolerance~$\varepsilon$. This selection can be understood from Figure~\ref{fig:abserrors}(a) by following the error curves from left to right. Owing to the presence of well-defined error plateaus, the appropriate $P$ values in the PSWF/PSWF case are straightforward to identify. Moreover, since the same functional form governs the decay of both the split error and the windowing error, the number of Fourier modes required to achieve a given tolerance is comparable to that required by the direct Fourier sum.

For the Gaussian/Gaussian case, determining the minimal $(m,P)$ combination is more subtle. It is well known that this algorithm does not exhibit sharply defined error plateaus, due to the nature of its approximation mechanisms, for example the use of non-truncated Gaussian functions. Consequently, the error curves transition smoothly from the exponential decay regime to the asymptotic error level, rather than displaying 
a clear saturation plateau. In this context, 
\cite{baggeFastEwaldSummation2023,shamshirgarFastEwaldSummation2021} recommend choosing 
the parameters $m$ and $P$ slightly larger than what idealized error models would 
suggest. In particular, it is suggested to increase $m$ by a factor of $1.05$, a 
strategy that we also adopt here. This explains why the resulting $m$ values for the 
Gaussian/Gaussian case are slightly larger---approximately $5\%$, up to rounding---than 
the corresponding values obtained using the direct Gaussian sum reported in the table.

Table~\ref{tab:resolution1} shows that the PSWF/PSWF method achieves the prescribed
accuracy using both fewer Fourier modes and a smaller window support than the
Gaussian/Gaussian method. In the large-$c_s$ regime, the Gaussian split requires
approximately eight times as many Fourier modes as the PSWF split to reach the target
tolerance for the present configuration. As the tolerance is tightened, this ratio
increases toward its asymptotic value. Taking into account the $1.05$ correction factor
applied to the Gaussian/Gaussian case, we estimate that the upper bound on the ratio of
required Fourier modes between the Gaussian/Gaussian and PSWF/PSWF methods is
approximately $8 \times 1.05^3 \approx 9.3$. In
Table~\ref{tab:resolution1}, the observed ratio reaches $8.0$ at
$\varepsilon = 10^{-12}$; however, this value does not yet reflect the asymptotic limit.
In addition, the required window support~$P$ is reduced by approximately a factor of
three when using the PSWF window compared to the Gaussian window.

The results for the mixed Gaussian/PSWF configuration are also included in 
Table~\ref{tab:resolution1}. As expected, the required Fourier resolution is comparable 
to that of the Gaussian/Gaussian case, while the window support is determined by—and 
matches—that of the PSWF window.

\subsection{Fast Ewald with PSWF split and B-spline window}
\label{sec:PSWF_Bspline}

In both the PSWF/PSWF and Gaussian/Gaussian cases, the split error and the window
(aliasing) error decay exponentially fast. For the PSWF/PSWF case, we observe in
Figure~\ref{fig:abserrors}(b) that, for fixed window support~$P$, the total error decays
as $\mathcal{O}(e^{-m})$ until it reaches the windowing error level determined by $P$. On a semilogarithmic scale, this behavior appears as a straight
line with negative slope, which levels off into a horizontal plateau once the
windowing error dominates. Although aliasing errors are formally present also for small
values of~$m$, the total error in this regime is dominated by the split truncation error. For fixed window support~$P$, the windowing (aliasing) error is
therefore essentially independent of~$m$ and determines an error floor that cannot be
reduced by increasing the Fourier grid size. The same qualitative behavior is observed
in the Gaussian/Gaussian case~\cite{shamshirgarFastEwaldSummation2021}.

By contrast, when a B-spline window is used, the aliasing errors introduced by the
windowing decay more slowly than the split errors, namely only algebraically. For fixed
spline order (window support)~$P$, the decay is typically of the form
$\mathcal{O}(m^{-P})$ \cite{schoenbergCardinalSplineInterpolation1973}. As a result, the total error is eventually dominated by the
windowing contribution, and, depending on the prefactor, this may occur already for
relatively small values of~$m$.

\begin{figure}[tbp]
\centering
\includegraphics{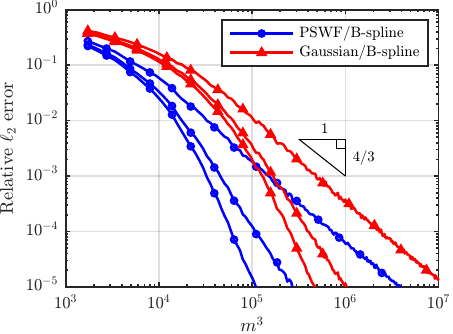}
\caption{Total number of Fourier modes $m^3$ versus relative $\ell_2$ error for PSWF/B-spline and Gaussian/B-spline (SPME) configurations at $L=1$, $r_c=0.1$, $n=100$, and B-spline order $P\in\{4,6,8\}$. The split parameters $c_s$ (PSWF) and $\sigma$ (Gaussian) were chosen to yield the same direct-sum split error of $10^{-5}$. For small and moderate $m$, the PSWF split yields faster error decay and reaches a given tolerance with fewer modes; for large $m$, both curves exhibit the same algebraic slope governed by the B-spline window. The dotted horizontal line indicates an error of $10^{-5}$; the corresponding $m$-values where the curves cross this line are listed in Table~\ref{tab:resolution2}.}
\label{fig:B-spline}
\end{figure}

This effect is clearly illustrated in Figure~\ref{fig:B-spline}, which shows the relative
$\ell_2$ error for the PSWF/B-spline and Gaussian/B-spline configurations. In these experiments, the
split parameters are chosen as
$c_s = (r_c/\sigma)^2 = \log(1/\varepsilon)$ with $\varepsilon = 10^{-5}$, and B-spline
orders (which coincide with the window support) $P \in \{4,6,8\}$. 

In Figure~\ref{fig:B-spline}, we observe that the error curves corresponding to the
Gaussian/B-spline configuration are shifted upward relative to those of the
PSWF/B-spline method. This shift is a direct consequence of the faster decay of the PSWF
split error. For smaller values of $m^3$, the split error contributes significantly to
the total error. As $m^3$ is increased, the exponentially decaying split error becomes
negligible, and the total error is instead governed by the algebraically decaying
window-induced aliasing error.

To verify the algebraic convergence rate, a reference triangle is included in the plot
for the case $P = 4$, indicating the expected slope on a log--log scale. The cases
$P = 6$ and $P = 8$ exhibit consistent slopes, although reference triangles are omitted
for clarity.

In all B-spline test cases considered here, the shape parameters were first chosen
approximately at the split-error level using $\log(1/\varepsilon)$, so that the split
error was comparable to the target tolerance. A high-resolution reference solution was
then computed using direct PSWF or Gaussian summation. The error was subsequently
evaluated for increasing Fourier grid sizes~$m$ until the prescribed tolerance
$\varepsilon$ was reached, at which point the corresponding value of~$m$ was recorded.
One may question whether this procedure provides an appropriate basis for comparing
resolution requirements, since, in the PSWF split case, the prescribed tolerance
$\varepsilon$ is often reached at values of~$m$ that are significantly smaller than those required to reduce the B-spline aliasing error to the same level. However, this behavior is precisely what motivates the chosen procedure. Increasing the
PSWF bandlimit parameter~$c_s$ increases the number of Fourier modes required to reach a
given tolerance, which manifests itself as an upward shift of the error curves. As a
result, the minimal value of~$m$ in the PSWF/B-spline case is attained when $c_s$ is
chosen such that the split truncation error is $\mathcal{O}(\varepsilon)$, thereby
minimizing the Fourier grid size for a fixed window support. Choosing a smaller~$c_s$
causes the split error to decay rapidly with increasing~$m$ until slowly decaying
$\mathrm{sinc}$-type oscillations in the PSWF spectrum dominate the total error (see
Figure~\ref{fig:PSWFvsGaussian}). In this regime, further reductions in~$m$ are not
possible without increasing the window support. Consequently, selecting $c_s$ so that the split error is comparable to the target
tolerance yields the smallest Fourier grid size compatible with the prescribed accuracy
for a fixed B-spline order, making the above procedure appropriate for assessing
resolution requirements. The Gaussian (nontruncated) case is somewhat more subtle;
nevertheless, we expect qualitatively similar behavior.

\begin{table}[tbp]
\centering
\caption{Fourier-mode requirements to achieve target tolerances 
$\varepsilon \in \{10^{-2}, 10^{-3}, \ldots, 10^{-6}\}$ (relative $\ell_2$ error) for PSWF/B-spline and 
Gaussian/B-spline (SPME) configurations with $L = 1$, $r_c = 0.1$, $n = 100$, and 
B-spline orders $P \in \{4,6,8\}$ (as in Figure~\ref{fig:B-spline}). The table reports the 
minimum number of Fourier modes $m$, the cubic ratios 
$(m^{\mathrm{G}}_P / m^{\mathrm{P}}_P)^3$ between the Gaussian/B-spline (G) and 
PSWF/B-spline (P) cases, and the overall ratio $(m^{\mathrm{G}}_P / m^{\mathrm{PP}})^3$ 
relative to the PSWF/PSWF (PP) reference.}
\label{tab:resolution2}
\footnotesize
\begin{tabularx}{\textwidth}{@{}c|C|C|C|C@{}}
\toprule
$\varepsilon$ &
\makebox[0pt][c]{\begin{tabular}{@{}c@{}}$m_P^{\mathrm{G}}$\\[2pt]$(P{=}4,6,8)$\end{tabular}} &
\makebox[0pt][c]{\begin{tabular}{@{}c@{}}$m_P^{\mathrm{P}}$\\[2pt]$(P{=}4,6,8)$\end{tabular}} &
\makebox[0pt][c]{\begin{tabular}{@{}c@{}}$(m_P^{\mathrm{G}}/m_P^{\mathrm{P}})^3$\\[2pt]$(P{=}4,6,8)$\end{tabular}} &
\makebox[0pt][c]{\begin{tabular}{@{}c@{}}$(m_P^{\mathrm{G}}/m^{\mathrm{PP}})^3$\\[2pt]$(P{=}4,6,8)$\end{tabular}} \\
\midrule
$10^{-2}$ & \hphantom{1}31, \hphantom{1}25, 24\hphantom{1} & \hphantom{1}18, \hphantom{1}15, 14 & 5.11, 4.63, 5.04 & \hphantom{2}13.56, \hphantom{1}7.11, 6.29\hphantom{1} \\
$10^{-3}$ & \hphantom{1}63, \hphantom{1}43, 39\hphantom{1} & \hphantom{1}39, \hphantom{1}26, 23 & 4.22, 4.52, 4.88 & \hphantom{2}31.26, \hphantom{1}9.94, 7.41\hphantom{1} \\
$10^{-4}$ & 117, \hphantom{1}67, 57\hphantom{1} & \hphantom{1}76, \hphantom{1}42, 34 & 3.65, 4.06, 4.71 & \hphantom{2}81.37, 15.28, 9.41\hphantom{1} \\
$10^{-5}$ & 230, 101, 78\hphantom{1} & 154, \hphantom{1}66, 49 & 3.33, 3.58, 4.03 & 283.78, 24.03, 11.07 \\
$10^{-6}$ & 455, 156, 106 & 308, 100, 66 & 3.22, 3.80, 4.14 & 1271.4, 51.24, 16.08 \\
\bottomrule
\end{tabularx}

\end{table}

Table~\ref{tab:resolution2} summarizes the minimal values of $m$ required to reach the 
tolerances $\varepsilon \in \{10^{-2},10^{-3},\ldots,10^{-6}\}$. In practice, error 
tolerances in this range are typical (see, e.g., 
GROMACS~\cite{vanderspoelGROMACSFastFlexible2005}). For smaller tolerances, the use of 
B-spline windows becomes prohibitively expensive compared to the PSWF/PSWF and 
Gaussian/Gaussian configurations, and these cases are therefore omitted.

The results show that employing a PSWF split provides a clear advantage when 
B-spline windows are required by the implementation. Depending on the tolerance $\varepsilon$ the 
number of Fourier modes is reduced by a factor of approximately $3$–$5$ compared to 
the Gaussian/B-spline case, with larger improvements observed for smaller tolerances. 
In contrast, the Gaussian/B-spline configuration is substantially less efficient than 
the PSWF/PSWF reference. For example, for $P = 4$ and $\varepsilon = 10^{-6}$, the 
PSWF/PSWF method requires roughly three orders of magnitude fewer Fourier modes than 
the Gaussian/B-spline method, while even for $P = 8$ a reduction by about one order of 
magnitude is observed at the practically relevant tolerance $\varepsilon = 10^{-5}$.

\section{Conclusions}
\label{sec:conclusions}

We have presented a complete description of fast Ewald summation based on the first prolate spheroidal wave function of order zero (PSWF). While PSWFs have recently attracted attention in this context, our main contribution is a systematic and unified treatment: we define PSWF-based mollifier and window functions, establish their analytical properties, and integrate them consistently into the fast Ewald framework.

The resulting PSWF-based split has several advantageous properties. Because PSWFs form an exact Fourier transform pair on a bandlimited interval, the Fourier-space representation is particularly simple, and the residual kernel becomes compactly supported, eliminating truncation errors in the real-space sum. Consequently, the total approximation error is governed by Fourier truncation and aliasing. Owing to the superior spectral localization of PSWFs, a prescribed accuracy can be reached with substantially fewer Fourier modes and smaller window supports than in Gaussian- and B-spline-based methods. Numerical experiments confirm these predictions and demonstrate significant reductions in computational cost at fixed accuracy.

We have derived a detailed error analysis for both Fourier truncation and window-induced aliasing. The resulting error estimates and simplified closed-form models exhibit exponential decay and enable transparent and reliable parameter selection, providing a practical framework for balancing accuracy and computational cost.

Future work includes extending the PSWF-Ewald method and the error analysis to other interaction kernels, such as those arising in Stokes flow and wave propagation, and investigating the potential benefits of oversampling to further accelerate fast Ewald summation.

%
%
%
%

\bibliographystyle{abbrv}
\bibliography{zoteroexport.bib}

@Article{LAMMPS,
  author = "A. P. Thompson and H. M. Aktulga and R. Berger and 
     D. S. Bolintineanu and W. M. Brown and P. S. Crozier and
     P. J. in 't Veld and A. Kohlmeyer and S. G. Moore and T. D. Nguyen and
     R. Shan and M. J. Stevens and J. Tranchida and C. Trott and S. J. Plimpton",
  title = "{LAMMPS} - a flexible simulation tool for
     particle-based materials modeling at the 
     atomic, meso, and continuum scales",
  journal = "Computer Physics Communications",
  volume =  "271",
  pages =   "108171",
  year =    "2022",
  doi = "10.1016/j.cpc.2021.108171"
}

@article{afklintebergFastEwaldSummation2017,
  title = {Fast {{Ewald}} Summation for Free-Space {{Stokes}} Potentials},
  author = {{af Klinteberg}, Ludvig and Shamshirgar, Davoud Saffar and Tornberg, Anna-Karin},
  year = 2017,

  journal = {Research in the Mathematical Sciences},
  volume = {4},
  number = {1},
  pages = {1},
  issn = {2197-9847},
  doi = {10.1186/s40687-016-0092-7},
  urldate = {2024-10-05},
  langid = {english}
}

@article{baggeFastEwaldSummation2023,
  title = {Fast {{Ewald}} Summation for {{Stokes}} Flow with Arbitrary Periodicity},
  author = {Bagge, Joar and Tornberg, Anna-Karin},
  year = 2023,

  journal = {Journal of Computational Physics},
  volume = {493},
  pages = {112473},
  issn = {00219991},
  doi = {10.1016/j.jcp.2023.112473},
  urldate = {2024-10-05},
  langid = {english}
}

@article{barnettAliasingError$expbeta2021,
  title = {Aliasing error of the $\exp(\beta \sqrt{1-z^2})$ kernel in the nonuniform fast Fourier transform},
  author = {Barnett, Alex H.},
  year = 2021,

  journal = {Applied and Computational Harmonic Analysis},
  volume = {51},
  pages = {1-16},
  issn = {10635203},
  doi = {10.1016/j.acha.2020.10.002},
  urldate = {2024-12-28},
  langid = {english}
}

@article{barnettParallelNonuniformFast2019a,
  title = {A {{Parallel Nonuniform Fast Fourier Transform Library Based}} on an ``{{Exponential}} of {{Semicircle}}" {{Kernel}}},
  author = {Barnett, Alexander H. and Magland, Jeremy and Af Klinteberg, Ludvig},
  year = 2019,

  journal = {SIAM Journal on Scientific Computing},
  volume = {41},
  number = {5},
  pages = {C479-C504},
  issn = {1064-8275, 1095-7197},
  doi = {10.1137/18M120885X},
  urldate = {2024-10-05},
  langid = {english}
}

@article{beylkinFastFourierTransform1995,
  title = {On the {{Fast Fourier Transform}} of {{Functions}} with {{Singularities}}},
  author = {Beylkin, G.},
  year = 1995,

  journal = {Applied and Computational Harmonic Analysis},
  volume = {2},
  number = {4},
  pages = {363--381},
  issn = {1063-5203},
  doi = {10.1006/acha.1995.1026},
  urldate = {2025-05-08}
}

@book{chebfun,
  title = {Chebfun {{Guide}}},
  author = {{T. A. Driscoll, N. Hale, and L. N. Trefethen, editors}},
  year = 2014,
  publisher = {Pafnuty Publications},
  address = {Oxford},
  urldate = {2025-05-13}
}

@article{cooleyAlgorithmMachineCalculation1965,
  title = {An {{Algorithm}} for the {{Machine Calculation}} of {{Complex Fourier Series}}},
  author = {Cooley, James W. and Tukey, John W.},
  year = 1965,
  journal = {Mathematics of Computation},
  volume = {19},
  number = {90},
  eprint = {2003354},
  eprinttype = {jstor},
  pages = {297--301},
  publisher = {American Mathematical Society},
  issn = {0025-5718},
  doi = {10.2307/2003354},
  urldate = {2025-12-04}
}

@article{dardenParticleMeshEwald1993,
  title = {Particle Mesh {{Ewald}}: {{An}} {{{\emph{N}}}} {$\cdot$}log( {{{\emph{N}}}} ) Method for {{Ewald}} Sums in Large Systems},
  shorttitle = {Particle Mesh {{Ewald}}},
  author = {Darden, Tom and York, Darrin and Pedersen, Lee},
  year = 1993,

  journal = {The Journal of Chemical Physics},
  volume = {98},
  number = {12},
  pages = {10089--10092},
  issn = {0021-9606, 1089-7690},
  doi = {10.1063/1.464397},
  urldate = {2025-03-08},
  langid = {english}
}

@article{dunsterUniformAsymptoticExpansions1986,
  title = {Uniform {{Asymptotic Expansions}} for {{Prolate Spheroidal Functions}} with {{Large Parameters}}},
  author = {Dunster, T. M.},
  year = 1986,

  journal = {SIAM Journal on Mathematical Analysis},
  volume = {17},
  number = {6},
  pages = {1495--1524},
  issn = {0036-1410, 1095-7154},
  doi = {10.1137/0517108},
  urldate = {2025-01-02},
  langid = {english}
}

@article{duttFastFourierTransforms1993a,
  title = {Fast {{Fourier Transforms}} for {{Nonequispaced Data}}},
  author = {Dutt, A. and Rokhlin, V.},
  year = 1993,

  journal = {SIAM Journal on Scientific Computing},
  volume = {14},
  number = {6},
  pages = {1368--1393},
  issn = {1064-8275, 1095-7197},
  doi = {10.1137/0914081},
  urldate = {2025-06-01},
  langid = {english}
}

@article{essmannSmoothParticleMesh1995,
  title = {A Smooth Particle Mesh {{Ewald}} Method},
  author = {Essmann, Ulrich and Perera, Lalith and Berkowitz, Max L. and Darden, Tom and Lee, Hsing and Pedersen, Lee G.},
  year = 1995,

  journal = {The Journal of Chemical Physics},
  volume = {103},
  number = {19},
  pages = {8577--8593},
  issn = {0021-9606, 1089-7690},
  doi = {10.1063/1.470117},
  urldate = {2024-12-10},
  langid = {english}
}

@article{ewaldBerechnungOptischerUnd1921,
  title = {Die {{Berechnung}} Optischer Und Elektrostatischer {{Gitterpotentiale}}},
  author = {Ewald, P. P.},
  year = 1921,
  journal = {Annalen der Physik},
  volume = {369},
  number = {3},
  pages = {253--287},
  issn = {1521-3889},
  doi = {10.1002/andp.19213690304},
  urldate = {2024-12-29},
  copyright = {Copyright \copyright{} 1921 WILEY-VCH Verlag GmbH \& Co. KGaA, Weinheim},
  langid = {english}
}

@article{fuchsEigenvaluesIntegralEquation1964,
  title = {On the Eigenvalues of an Integral Equation Arising in the Theory of Band-Limited Signals},
  author = {Fuchs, W. H. J},
  year = 1964,

  journal = {Journal of Mathematical Analysis and Applications},
  volume = {9},
  number = {3},
  pages = {317--330},
  issn = {0022-247X},
  doi = {10.1016/0022-247X(64)90017-4},
  urldate = {2025-12-11}
}

@article{greengardAcceleratingNonuniformFast2004,
  title = {Accelerating the {{Nonuniform Fast Fourier Transform}}},
  author = {Greengard, Leslie and Lee, June-Yub},
  year = 2004,
  journal = {SIAM Review},
  volume = {46},
  number = {3},
  eprint = {20453531},
  eprinttype = {jstor},
  pages = {443--454},
  publisher = {{Society for Industrial and Applied Mathematics}},
  issn = {0036-1445},
  urldate = {2024-12-29}
}

@book{hockneyComputerSimulationUsing2021,
  title = {Computer {{Simulation Using Particles}}},
  author = {Hockney, R. W. and Eastwood, J. W.},
  year = 2021,

  publisher = {CRC Press},
  address = {Boca Raton},
  doi = {10.1201/9780367806934},
  isbn = {978-0-367-80693-4}
}

@article{hofmannNFFTBasedEwald2017a,
  title = {{{NFFT}} Based {{Ewald}} Summation for Electrostatic Systems with Charges and Dipoles},
  author = {Hofmann, Michael and Nestler, Franziska and Pippig, Michael},
  year = 2017,

  journal = {Applied Numerical Mathematics},
  volume = {122},
  pages = {39--65},
  issn = {01689274},
  doi = {10.1016/j.apnum.2017.07.008},
  urldate = {2025-05-23},
  langid = {english}
}

@article{jiangDualspaceMultilevelKernelsplitting,
  title = {A Dual-Space Multilevel Kernel-Splitting Framework for Discrete and Continuous Convolution},
  author = {Jiang, Shidong and Greengard, Leslie},
  journal = {Communications on Pure and Applied Mathematics},
  volume={78},
  number={5},
  pages={1086--1143},
  year={2025},
  doi = {10.1002/cpa.22240},
  urldate = {2024-12-29},
  chapter = {78},
  copyright = {\copyright{} 2024 Wiley Periodicals LLC.},
  langid = {english}
}

@article{keiner2009using,
  title = {Using {{NFFT}} 3---{{A}} Software Library for Various Nonequispaced Fast {{Fourier}} Transforms},
  author = {Keiner, Jens and Kunis, Stefan and Potts, Daniel},
  year = 2009,
  journal = {ACM Transactions on Mathematical Software (TOMS)},
  volume = {36},
  number = {4},
  pages = {1--30},
  publisher = {ACM New York, NY, USA}
}

@article{klintebergFastSummationStokes2025,
  title = {Fast Summation of {{Stokes}} Potentials Using a New Kernel-Splitting in the {{DMK}} Framework},
  author = {af Klinteberg, Ludvig and Greengard, Leslie and Jiang, Shidong and Tornberg, Anna-Karin},
  year = 2025,

  journal = {arXiv:2509.21471 [math.NA]},
  eprint = {2509.21471},
  primaryclass = {math},
  publisher = {arXiv},
  doi = {10.48550/arXiv.2509.21471},
  urldate = {2026-02-05},
  archiveprefix = {arXiv}
}

@article{liangAcceleratingFastEwald2025,
  title = {Accelerating {{Fast Ewald Summation}} with {{Prolates}} for {{Molecular Dynamics Simulations}}},
  author = {Liang, Jiuyang and Lu, Libin and Barnett, Alex and Greengard, Leslie and Jiang, Shidong},
  year = 2025,
  journal = {arXiv:2505.09727 [math.NA]},
  eprint = {2505.09727},
  primaryclass = {math},
  publisher = {arXiv},
  doi = {10.48550/arXiv.2505.09727},
  urldate = {2025-05-23},
  archiveprefix = {arXiv},
  langid = {english}
}

@article{lindboSpectralAccuracyFast2011,
  title = {Spectral Accuracy in Fast {{Ewald-based}} Methods for Particle Simulations},
  author = {Lindbo, Dag and Tornberg, Anna-Karin},
  year = 2011,

  journal = {Journal of Computational Physics},
  volume = {230},
  number = {24},
  pages = {8744--8761},
  issn = {00219991},
  doi = {10.1016/j.jcp.2011.08.022},
  urldate = {2024-10-05},
  copyright = {https://www.elsevier.com/tdm/userlicense/1.0/},
  langid = {english}
}

@article{nestlerAutomatedParameterTuning2016,
  title = {Automated Parameter Tuning Based on {{RMS}} Errors for Nonequispaced {{FFTs}}},
  author = {Nestler, Franziska},
  year = 2016,

  journal = {Advances in Computational Mathematics},
  volume = {42},
  number = {4},
  pages = {889--919},
  issn = {1019-7168, 1572-9044},
  doi = {10.1007/s10444-015-9446-8},
  urldate = {2025-04-14},
  langid = {english}
}

@article{nestlerFastEwaldSummation2015,
  title = {Fast {{Ewald}} Summation Based on {{NFFT}} with Mixed Periodicity},
  author = {Nestler, Franziska and Pippig, Michael and Potts, Daniel},
  year = 2015,

  journal = {Journal of Computational Physics},
  volume = {285},
  pages = {280--315},
  issn = {00219991},
  doi = {10.1016/j.jcp.2014.12.052},
  urldate = {2025-01-19},
  langid = {english}
}

@article{osipovEvaluationProlateSpheroidal2014,
  title = {On the Evaluation of Prolate Spheroidal Wave Functions and Associated Quadrature Rules},
  author = {Osipov, Andrei and Rokhlin, Vladimir},
  year = 2014,

  journal = {Applied and Computational Harmonic Analysis},
  volume = {36},
  number = {1},
  pages = {108--142},
  issn = {10635203},
  doi = {10.1016/j.acha.2013.04.002},
  urldate = {2024-12-28},
  langid = {english}
}

@book{osipovProlateSpheroidalWave2013,
  title = {Prolate {{Spheroidal Wave Functions}} of {{Order Zero}}: {{Mathematical Tools}} for {{Bandlimited Approximation}}},
  shorttitle = {Prolate {{Spheroidal Wave Functions}} of {{Order Zero}}},
  author = {Osipov, Andrei and Rokhlin, Vladimir and Xiao, Hong},
  year = 2013,
  series = {Applied {{Mathematical Sciences}}},
  volume = {187},
  publisher = {Springer US},
  address = {Boston, MA},
  doi = {10.1007/978-1-4614-8259-8},
  urldate = {2024-12-31},
  copyright = {https://www.springernature.com/gp/researchers/text-and-data-mining},
  isbn = {978-1-4614-8258-1 978-1-4614-8259-8},
  langid = {english}
}

@book{plonkaNumericalFourierAnalysis2018,
  title = {Numerical {{Fourier Analysis}}},
  author = {Plonka, Gerlind and Potts, Daniel and Steidl, Gabriele and Tasche, Manfred},
  year = 2018,
  series = {Applied and {{Numerical Harmonic Analysis}}},
  publisher = {Springer International Publishing},
  address = {Cham},
  doi = {10.1007/978-3-030-04306-3},
  urldate = {2025-05-08},
  copyright = {http://www.springer.com/tdm},
  isbn = {978-3-030-04305-6 978-3-030-04306-3},
  langid = {english}
}

@incollection{pottsFastFourierTransforms2001,
  title = {Fast {{Fourier Transforms}} for {{Nonequispaced Data}}: {{A Tutorial}}},
  shorttitle = {Fast {{Fourier Transforms}} for {{Nonequispaced Data}}},
  booktitle = {Modern {{Sampling Theory}}: {{Mathematics}} and {{Applications}}},
  author = {Potts, Daniel and Steidl, Gabriele and Tasche, Manfred},
  editor = {Benedetto, John J. and Ferreira, Paulo J. S. G.},
  year = 2001,
  pages = {247--270},
  publisher = {Birkh\"auser},
  address = {Boston, MA},
  doi = {10.1007/978-1-4612-0143-4_12},
  urldate = {2025-04-28},
  isbn = {978-1-4612-0143-4},
  langid = {english}
}

@book{schoenbergCardinalSplineInterpolation1973,
  title = {Cardinal {{Spline Interpolation}}},
  author = {Schoenberg, I. J.},
  year = 1973,

  publisher = {{Society for Industrial and Applied Mathematics}},
  doi = {10.1137/1.9781611970555},
  urldate = {2024-12-15},
  isbn = {978-0-89871-009-0 978-1-61197-055-5},
  langid = {english}
}

@article{shamshirgarFastEwaldSummation2021,
  title = {Fast {{Ewald}} Summation for Electrostatic Potentials with Arbitrary Periodicity},
  author = {Shamshirgar, Davood Saffar and Bagge, Joar and Tornberg, Anna-Karin},
  year = 2021,

  journal = {The Journal of Chemical Physics},
  volume = {154},
  number = {16},
  eprint = {1712.04732},
  primaryclass = {cs, math},
  pages = {164109},
  issn = {0021-9606, 1089-7690},
  doi = {10.1063/5.0044895},
  urldate = {2024-10-05},
  archiveprefix = {arXiv},
  langid = {english}
}

@article{slepianCommentsFourierAnalysis1983,
  title = {Some Comments on {{Fourier}} Analysis, Uncertainty and Modeling},
  author = {Slepian, David},
  year = 1983,

  journal = {SIAM Review},
  volume = {25},
  number = {3},
  pages = {379--393},
  publisher = {{Society for Industrial and Applied Mathematics}},
  issn = {0036-1445},
  doi = {10.1137/1025078},
  urldate = {2025-03-23}
}

@article{slepianProlateSpheroidalWave1961,
  title = {Prolate {{Spheroidal Wave Functions}}, {{Fourier Analysis}} and {{Uncertainty}} - {{I}}},
  author = {Slepian, D. and Pollak, H. O.},
  year = 1961,

  journal = {Bell System Technical Journal},
  volume = {40},
  number = {1},
  pages = {43--63},
  issn = {00058580},
  doi = {10.1002/j.1538-7305.1961.tb03976.x},
  urldate = {2024-12-28},
  langid = {english}
}

@article{steidlNoteFastFourier1998,
  title = {A Note on Fast {{Fourier}} Transforms for Nonequispaced Grids},
  author = {Steidl, Gabriele},
  year = 1998,

  journal = {Advances in Computational Mathematics},
  volume = {9},
  number = {3},
  pages = {337--352},
  issn = {1572-9044},
  doi = {10.1023/A:1018901926283},
  urldate = {2025-04-28},
  langid = {english}
}

@book{szegoORTHOGONALPOLYNOMIALS,
  title={Orthogonal polynomials},
  author={Szegö, Gabor},
  volume={23},
  year={1939},
  publisher={American Mathematical Soc.}
}

@article{vanderspoelGROMACSFastFlexible2005,
  title = {{{GROMACS}}: {{Fast}}, Flexible, and Free},
  shorttitle = {{{GROMACS}}},
  author = {Van Der Spoel, David and Lindahl, Erik and Hess, Berk and Groenhof, Gerrit and Mark, Alan E. and Berendsen, Herman J. C.},
  year = 2005,

  journal = {Journal of Computational Chemistry},
  volume = {26},
  number = {16},
  pages = {1701--1718},
  issn = {0192-8651, 1096-987X},
  doi = {10.1002/jcc.20291},
  urldate = {2024-12-29},
  copyright = {http://onlinelibrary.wiley.com/termsAndConditions\#vor},
  langid = {english}
}

\appendix

\section{PSWF asymptotics and approximations}\label{sec:PSWF_appr}

In this section, we summarize the asymptotic behavior of $\psi_0^c$, along with corresponding approximations. These results are used to derive the closed-form approximations in Section~\ref{sec:error_analysis}.

Define
\begin{align}
f(s):=
\begin{cases}
\psi_0^c(s),& s\in[-1,1],\\
0,&\text{otherwise}.\\
\end{cases}
\end{align}
Considering the Fourier tail behavior, the PSWF scaling
(see Theorem~\ref{thm:bandlimited}) yields
\begin{align}
1-\mu_0(c)
  = 2\pi \int_{\R\setminus [-c,c]} |\hat{f}(\omega)|^2\,d\omega
  = 2\pi \|\hat{f}\|^2_{L^2(\R\setminus [-c,c])},
\end{align}
where $\mu_0(c) := c/(2\pi)\,\lambda_0^2$ denotes the first concentration eigenvalue associated
with $\psi_0^c$ and where $\|f\|_{L^2(-1,1)} = \|\psi_0^c\|_{L^2(-1,1)} = 1$.
Fuchs~\cite{fuchsEigenvaluesIntegralEquation1964} proved the asymptotic
behaviour
\begin{align}\label{eq:fuchs}
1-\mu_0(c) \sim 4\sqrt{\pi c}\,e^{-2c} \qquad \text{as } c\to\infty.
\end{align}
Taking the square root yields
\begin{align}\label{eq:fuchs2}
\|\hat{f}\|_{L^2(\R\setminus [-c,c])}
   \sim C_{\mathrm{F}}\, c^{1/4} e^{-c},
\end{align}
where the constant $C_{\mathrm{F}}>0$ depends on the chosen
Fourier-transform convention.

For large $c$, the first PSWF eigenvalue of the integral eigenvalue equation \eqref{eq:eigenvalueproblem} satisfies \cite{osipovProlateSpheroidalWave2013}
\begin{align}\label{eq:lambda}
\lambda_0(c) \sim \sqrt{\frac{2\pi}{c}}.
\end{align}
The Fourier--eigenfunction identity at the end-points
(Lemma~\ref{lem:PSWFrelation})
\begin{align}\label{eq:identity}
\hat{f}(c)=\lambda_0(c)\,\psi_0^c(1)
\end{align}
shows that the decay of $\hat{f}(c)$ determines the decay of
$\psi_0^c(1)$. Using \eqref{eq:lambda} and \eqref{eq:identity} gives
\begin{align}\label{eq:10}
\frac{\psi_0^c(1)}{\psi_0^c(0)}
    \sim \sqrt{\frac{c}{2\pi}}\,
          \frac{\hat{f}(c)}{\psi_0^c(0)}.
\end{align}

Motivated by the strong exponential localization of $\hat{f}$
near $|\omega|=c$, we assume that the pointwise value $\hat{f}(c)$
is of the same order as the tail norm, and write
\begin{align}
\hat{f}(c)
\approx \|\hat{f}\|_{L^2(\R\setminus [-c,c])}.
\end{align}
Furthermore, it is well known from WKB analysis of the PSWF equation
(see Dunster~\cite{dunsterUniformAsymptoticExpansions1986}) that
\begin{align}\label{eq:asymp0}
\psi_0^c(0) \sim C_0 \,c^{1/4}.
\end{align}

Based on the relations above, we fit the following expressions numerically
in the interval $7 \le c \le 35$:
\begin{align}
    \lambda_0 &\approx \sqrt{\frac{2\pi}{c}},\label{eq:asymp_lambda}\\
    \psi_0^c(0) &\approx A_0\, c^{1/4},\label{eq:asymp1}\\
    \psi_0^c(1) &\approx A_1\, c^{3/4}e^{-c},\label{eq:asymp2}\\
    \frac{\psi_0^c(1)}{\psi_0^c(0)} &\approx A_2\, c^{1/2}e^{-c},\label{eq:asymp3}.
\end{align}
Here, \eqref{eq:asymp2} is obtained from \eqref{eq:10} together with
\eqref{eq:fuchs2} and \eqref{eq:asymp0}, and \eqref{eq:asymp3} follows
by dividing \eqref{eq:asymp2} by \eqref{eq:asymp1}.

The results are presented in Table~\ref{tab:curvefits} and
Figure~\ref{fig:curvefit}. For all expressions, it is possible to obtain
a fit with maximum relative errors of about $1\%$. If the lower bound of
the fitting interval is increased above $c=7$, the fit typically improves
further.

The upper limit $c\lesssim 35$ is chosen because the numerical evaluation
of $\psi_0^c$ approaches machine precision beyond this point.

\begin{figure}[tbp]
    \centering
    \includegraphics[trim={0 0 5.5cm 0},clip,width=11cm]{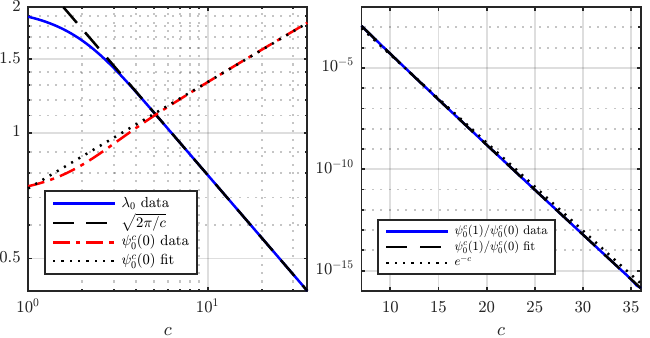}
    \caption{Curve-fitted approximations of key PSWF quantities.}
    \label{fig:curvefit}
\end{figure}

\begin{table}[tbp]
\centering
\caption{Curve-fitted models together with fit quality for $7\leq c\leq 35$.}
\label{tab:curvefits}
\small
\begin{tabular}{lll}
\toprule
Quantity & Model & Max rel.\ error\\
\midrule
$\lambda_0(c)$ & $\sqrt{2\pi/c}$ & $<0.1\%$ at $c=7$\\[5pt]
$\psi_0^c(0)$  & $0.736\,c^{0.2548}$ & $\approx 0.4\%$ at $c=7$\\[5pt]
$\psi_0^c(1)$  & $2.540\,c^{3/4}e^{-c}$ & $\approx 1.6\%$ at $c=7$\\[5pt]
$\psi_0^c(1)/\psi_0^c(0)$
& $3.424\, c^{1/2}\,e^{-c}$ & $\approx 1.5\%$ at $c=35$\\[5pt]
\bottomrule
\end{tabular}
\end{table}

\section{Numerical evaluation of $\psi_0^c$ and $(\psi_0^c)'$}
\label{app:pswf}

The numerical evaluation of $\psi_0^c$ we propose follows the algorithmic
framework of Osipov and Rokhlin~\cite{osipovEvaluationProlateSpheroidal2014}.
A short summary is given below.

\subsection{Legendre expansion and eigenproblem}
The function $\psi_0^c$ is represented as
\begin{align}\label{eq:PSWFLegendre}
\psi_0^c(x) = \sum_{k=0}^{K} a_{2k} P_{2k}(x),
\end{align}
where only even Legendre polynomials appear due to symmetry.
Here $P_k$ denotes the standard (unnormalized) Legendre polynomial; 
the normalized basis $\bar P_k(x)=\sqrt{k+\tfrac12}\,P_k(x)$ used
in~\cite{osipovEvaluationProlateSpheroidal2014} leads to an equivalent
eigenproblem with rescaled coefficients.
Substitution into the differential equation \eqref{eq:ode} 
leads to a three-term recurrence for the coefficients $\{a_{2k}\}$.
After truncation at order $K$, a finite-dimensional symmetric
tridiagonal eigenproblem is obtained. Its smallest eigenvalue
corresponds to $\chi_0$ and the associated eigenvector provides
the coefficients $a_{2k}$ \cite[Theorem~10]{osipovEvaluationProlateSpheroidal2014}.
\subsection{Computation of eigenpairs}
The tridiagonal structure permits efficient and accurate computation
of the eigenvalue $\chi_0$ and the coefficient vector $\{a_{2k}\}$.
In practice, Sturm bisection followed by inverse iteration is applied,
as recommended in \cite[Sec.~3.4]{osipovEvaluationProlateSpheroidal2014}.
This yields rapidly convergent approximations to $\psi_0^c$ with
complexity $\mathcal{O}(K^2)$.

\subsection{Evaluation of $\psi_0^c$ and $\lambda_0$}
Once the Legendre coefficients are available, $\psi_0^c$ is evaluated
directly from its expansion. The eigenvalue $\lambda_0(c)$ of the
integral operator is then obtained from
\begin{align}
\lambda_0 = \frac{1}{\psi_0^c(0)} \int_{-1}^1 \psi_0^c(s)\,ds,
\end{align}
using high-order Gauss–Legendre quadrature
\cite[Sec.~3.2]{osipovEvaluationProlateSpheroidal2014}.

\subsection{Evaluation of $(\psi_0^c)'$}

Given the Legendre expansion \eqref{eq:PSWFLegendre} the derivative can be computed as
\begin{align}
    (\psi_0^c)'(x) = \sum_{k=0}^{K} a_{2k} P'_{2k}(x).
\end{align}
Another way is to differentiate \eqref{eq:eigenvalueproblem} and evaluate the following integral using a numerical quadrature:
\begin{align}
    (\psi_0^c)'(x) = \frac{i c}{\lambda_0} \int_{-1}^1 t\,\psi_0^c(t)\,e^{icxt}\,\d t.
\end{align}

\subsection{Truncation order}
The required number of terms $K$ in the expansion grows linearly with
the bandlimit $c$. Empirically, $K \approx 1.2c$ suffices to achieve
$10^{-12}$ accuracy, in agreement with the decay estimates for the
Legendre coefficients in
\cite[Remark~9]{osipovEvaluationProlateSpheroidal2014}.

\section{Proofs of Lemmas \ref{lem:chi0} and \ref{lem:tailbnd}}\label{sec:tailbnd}

\begin{proof}[Proof of Lemma~\ref{lem:chi0}]
The Rayleigh quotient associated with \eqref{eq:ode} is 
\begin{align}
\chi_0 = \min_{\varphi \ne 0} 
    \frac{\int_{-1}^{1} (1-s^2)|\varphi'(s)|^2 \, \d s
    +c^2\int_{-1}^1 s^2|\varphi(s)|^2\,\d s}
    {\int_{-1}^{1} |\varphi(s)|^2 \,\d s}
    .
\end{align}
Then, using the test function $\varphi(s)=1$,
\begin{align}
\chi_0 \le
    \frac{c^2\int_{-1}^1 s^2\,\d s}{\int_{-1}^{1} 1 \,\d s}
    = \frac{c^2}{3}.
\end{align}
\end{proof}

\begin{proof}[Proof of Lemma~\ref{lem:tailbnd}]
Let $u(s):= s \psi_0^c(s)$. Substituting $\psi_0^c = u/s$ into \eqref{eq:ode}, we obtain
\begin{align}\label{eq:ueqn}
    p u'' + p' u' + q u = 0,
\end{align}
where
\begin{align}
    p(s) := \frac{s^2 - 1}{s^2}, 
    \qquad 
    q(s) := \frac{s^2 (c^2 s^2 - \chi_0) - 2}{s^4}
    = c^2-\chi_0 s^{-2}-2s^{-4}.
\end{align}
For $s>1$, define the Sonin-type functional (see, e.g.,~\cite[p.~166]{szegoORTHOGONALPOLYNOMIALS})
\begin{align}
    F(s) := u(s)^2 + \frac{p(s)}{q(s)} (u'(s))^2.
\end{align}
Since $p(s)>0$ for $s>1$, it remains to verify that $q(s)>0$ on $(1,\infty)$.

Using Lemma~\ref{lem:chi0}, $\chi_0\le c^2/3$, and assuming $c\ge \sqrt{3}$, we obtain for $s>1$,
\begin{align}
q(s)
= c^2-\chi_0 s^{-2}-2s^{-4}
> c^2-\chi_0-2
\ge \tfrac{2}{3}c^2-2
\ge 0.
\end{align}
Hence, $q(s)>0$ for all $s>1$, and therefore $F$ is well defined and nonnegative on $(1,\infty)$.

Using \eqref{eq:ueqn}, in particular $p u'' = - p' u' - q u$, differentiation yields
\begin{align}
    F'(s) = - \frac{(pq)'(s)}{q(s)^2} (u'(s))^2.
\end{align}
Moreover,
\begin{align}
(pq)'(s)
=
\frac{2(c^2+\chi_0)s^4-4(\chi_0-2)s^2-12}{s^7}.
\end{align}
For $s>1$, the numerator is bounded below by
\begin{align}
2(c^2+\chi_0)-4(\chi_0-2)-12
= 2(c^2-\chi_0-2)
\ge 2\left(\frac{2}{3}c^2-2\right)
\ge 0,
\end{align}
where we again used Lemma~\ref{lem:chi0} and $c\ge \sqrt{3}$. Hence, $(pq)'(s)\ge 0$ for all $s>1$, and therefore $F'(s)\le 0$ on $(1,\infty)$.

Thus, $F$ is nonincreasing on $(1,\infty)$. Therefore,
\begin{align}
    u(s)^2 \le F(s) \le \lim_{t\to 1^+} F(t)=u(1)^2=\bigl(\psi_0^c(1)\bigr)^2.
\end{align}
It follows that
\begin{align}
|\psi_0^{c}(s)|\le \frac{\psi_0^c(1)}{s}.
\end{align}
\end{proof}

\section{Auxiliary standard results}
\label{sec:standard}

\subsection{Radial extension (Fourier-side definition)}
\label{app:radial-extension}

Let $f:\R\to\R$ be even and belong to $L^1(\R)$, with Fourier transform
$\widehat{f}$. Define the radially symmetric function
$\eta:\R^3\to\R$ by
\begin{align}
\widehat{\eta}(\bm{\omega}) := \widehat{f}\big(|\bm{\omega}|\big),
\qquad \bm{\omega}\in\R^3.
\end{align}
Equivalently, in physical space,
\begin{align}
\eta(\bx)
\;=\;
\frac{1}{2\pi^2}
\int_{0}^{\infty}
\widehat{f}(\rho)\,
\frac{\sin(\rho |\bx|)}{\rho |\bx|}\,
\rho^2\,d\rho.
\end{align}

\begin{remark}
In general, $\eta(\bx)\neq f(|\bx|)$; the identification is through the
three-dimensional radial (spherical Bessel) integral above, not by
pointwise substitution.
\end{remark}

\subsection{Derivation of \eqref{eq:fcoeffs}}
\label{sec:fcoeffs}

For completeness, we provide the standard calculation leading to
\eqref{eq:fcoeffs}. Starting from the definition of the Fourier
coefficients of the periodized function $\tilde f$,
\begin{align}
c_{\bm{k}}(\tilde{f})
=
\frac{1}{V}
\int_{\Omega}
\tilde{f}(\bm{x})\,
e^{-\frac{2\pi i}{L}\bm{k}\cdot\bm{x}}
\,d\bm{x},
\end{align}
and substituting the definition of $\tilde f$, we obtain
\begin{align}
c_{\bm{k}}(\tilde{f})
&=
\frac{1}{V}
\int_{\Omega}
\sum_{\bm{r}\in\mathbb{Z}^3}
f(\bm{x}+L\bm{r})\,
e^{-\frac{2\pi i}{L}\bm{k}\cdot\bm{x}}
\,d\bm{x}
\nonumber\\
&=
\frac{1}{V}
\sum_{\bm{r}\in\mathbb{Z}^3}
\int_{\Omega+L\bm{r}}
f(\bm{x})\,
e^{-\frac{2\pi i}{L}\bm{k}\cdot(\bm{x}-L\bm{r})}
\,d\bm{x}
\nonumber\\
&=
\frac{1}{V}
\int_{\R^3}
f(\bm{x})\,
e^{-\frac{2\pi i}{L}\bm{k}\cdot\bm{x}}
\,d\bm{x},
\qquad
\text{since } e^{2\pi i\,\bm{k}\cdot\bm{r}}=1,
\nonumber\\
&=
\frac{1}{V}\,\widehat{f}(\bm{k}).
\end{align}

\subsection{A lattice-to-continuum estimate}
\label{app:cubecover}

We recall a standard lattice covering argument that bounds radial lattice
sums by corresponding integrals. This estimate is used in the proof of
Proposition~\ref{prop:PSWF_split_errors} to pass from discrete sums to radial
integrals.

\begin{lemma}[Lattice covering lemma in $\R^3$]
\label{lem:cubecover}
Let $f:[0,\infty)\to[0,\infty)$ be nonincreasing. For $K>\sqrt{3}/2$,
\begin{align}
\sum_{|\bm{k}|>K} f(|\bm{k}|)
\;\le\;
\int_{|\bx|>K-\sqrt{3}/2}
f(|\bx|)\,d\bx.
\end{align}
\end{lemma}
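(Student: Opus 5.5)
The plan is to reduce the lemma to a volume comparison, using a layer-cake decomposition of $f$ and disjoint unit cubes around the lattice points. I have not settled the final boundary estimate; it is named below and is the likely gap.

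\emph{Step 1 (layer cake).} Since $f\ge0$ is nonincreasing on $[0,\infty)$, I would write it as a superposition of indicators of initial intervals. Up to the value at countably many jump points, which I would treat separately by right- or left-continuity,
\begin{align*}
f(t)=\int_0^\infty \mathbf 1\{f(t)>\tau\}\,d\tau,
\end{align*}
and each level set $\{t:f(t)>\tau\}$ is an interval $[0,R_\tau)$ or $[0,R_\tau]$. By Tonelli both sides of the claimed inequality are linear in $f$ and commute with $\int d\tau$. So it suffices to prove the statement for $f=\mathbf 1_{[0,R]}$ (and its half-open variant) with $R>K$; for $R\le K$ both sides vanish or the claim is trivial. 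The claim then reads
\begin{align*}
\#\{\bk\in\Z^3: K<|\bk|\le R\}\;\le\; \bigl|\{\bx: K-\tfrac{\sqrt3}{2}<|\bx|\le R\}\bigr|.
\end{align*}

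\emph{Step 2 (cube covering).} To each lattice point $\bk$ I would attach the unit cube $Q_{\bk}=\bk+[-\tfrac12,\tfrac12)^3$. These cubes are pairwise disjoint and have volume one. Every point of $Q_{\bk}$ is within $\sqrt3/2$ of $\bk$. Hence $|\bk|>K$ implies $Q_{\bk}\subset\{|\bx|>K-\sqrt3/2\}$, which is exactly where the $\sqrt3/2$ shift enters. The hypothesis $K>\sqrt3/2$ makes the inner radius positive. The count on the left is then $\bigl|\bigcup_{K<|\bk|\le R}Q_{\bk}\bigr|$.

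\emph{Step 3 (outer boundary: main obstacle, unresolved).} The union of cubes can protrude beyond $|\bx|=R$ by up to $\sqrt3/2$, so Step 2 alone only gives the integral up to $R+\sqrt3/2$. I would try to offset this loss against the gain at the inner boundary: the annulus $K-\sqrt3/2<|\bx|\le K$ contains no lattice point with $|\bk|>K$ and is only partly covered by the cubes. Concretely, I would try to show
\begin{align*}
\Bigl|\bigcup_{K<|\bk|\le R}Q_{\bk}\setminus B_R\Bigr|\;\le\;\Bigl|\{K-\tfrac{\sqrt3}{2}<|\bx|\le R\}\setminus \bigcup_{K<|\bk|\le R}Q_{\bk}\Bigr|.
\end{align*}
The first thing I would try is the pairing $\bx\mapsto\bx\,(|\bx|-\sqrt3/2)/|\bx|$ to compare the two regions. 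A fallback is to use the classical bound on $\#\{|\bk|\le R\}$ by ball volumes, which already carries $\sqrt3/2$ corrections on both sides.

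I have not found a way to make this comparison rigorous. The pairing is not volume-preserving, and the ball-volume bounds leave a residual of order $R^2$ from the outer shell. Step 3 is therefore the genuine gap: if it fails, the method only yields the weaker integral up to radius $R+\sqrt3/2$. In that case I would not claim the lemma as stated and would instead check it in the form the proof of Proposition~\ref{prop:PSWF_split_errors} actually needs. There $f(t)=t^{-6}$ is smooth and decreasing, so the protrusion can be absorbed into the $\O(r_c/c_s)$ factor.
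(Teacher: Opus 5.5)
Your Step~3 cannot be completed: the inequality it asks for is false, and so is the lemma as stated. Your layer-cake reduction is an equivalence, since each $\mathbf{1}_{[0,R]}$ is itself an admissible $f$. The indicator case is therefore a genuine test, and it already fails for $f=\mathbf{1}_{[0,1]}$ and $K=0.9>\sqrt{3}/2$. The left-hand side counts the six lattice points $\pm\bm{e}_i$ and equals $6$. The right-hand side is $\tfrac{4\pi}{3}\bigl(1-(0.9-\tfrac{\sqrt{3}}{2})^3\bigr)<\tfrac{4\pi}{3}<4.19$.

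This is not an artifact of $K$ sitting near $\sqrt3/2$ or of discontinuity. Taking $K=1.4$ and $f=\mathbf{1}_{[0,\sqrt2]}$ gives $12$ on the left against about $11.2$ on the right. A continuous, even strictly decreasing, smoothing of these indicators violates the inequality as well. Nor can a different inner radius rescue a bound of the form $\int f(|\bx|)\,d\bx$: for $f=\mathbf{1}_{[0,1]}$ that integral never exceeds $\tfrac{4\pi}{3}$ over any region.

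The paper's one-line proof is exactly your Step~2: the disjoint cubes $Q_{\bk}$ and the bound $\bigl||\bx|-|\bk|\bigr|\le\sqrt3/2$, followed by ``monotonicity of $f$ then yields the stated bound''. But on $Q_{\bk}$ monotonicity only gives $f(|\bx|)\ge f(|\bk|+\tfrac{\sqrt3}{2})$, not $f(|\bx|)\ge f(|\bk|)$. So it passes over precisely the outer protrusion you isolated. You have not missed an idea; you have located a real hole in the argument.

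What your Steps~1--2 do establish is the shifted inequality. Your fallback ``integral up to radius $R+\sqrt3/2$'', reassembled over the levels $\tau$, gives
\[
\sum_{|\bk|>K} f(|\bk|)\;\le\;\int_{|\bx|>K-\sqrt3/2} f\bigl(\max\{|\bx|-\tfrac{\sqrt3}{2},\,0\}\bigr)\,d\bx .
\]
This also follows in one line without the layer cake. For $\bx\in Q_{\bk}$ one has $\max\{|\bx|-\tfrac{\sqrt3}{2},0\}\le|\bk|$, hence $f(|\bk|)\le\int_{Q_{\bk}} f\bigl(\max\{|\bx|-\tfrac{\sqrt3}{2},0\}\bigr)\,d\bx$. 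The cubes with $|\bk|>K$ are disjoint and contained in $\{|\bx|>K-\sqrt3/2\}$, so summing gives the bound.

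Record this in place of the lemma; it is all that Proposition~\ref{prop:PSWF_split_errors} needs. For $f(t)=t^{-6}$ and $K>\sqrt3$ the right-hand side equals $4\pi\int_{K-\sqrt3}^{\infty}(\rho+\tfrac{\sqrt3}{2})^2\rho^{-6}\,d\rho=\tfrac{4\pi}{3}K^{-3}\bigl(1+\O(K^{-1})\bigr)$. This has the same leading term as the paper's $\tfrac{4\pi}{3}(K-\tfrac{\sqrt3}{2})^{-3}$. With $K=K_{\max}=c_sL/(2\pi r_c)$, the correction is absorbed (for fixed $L$) into the factor $1+\O(r_c/c_s)$ in \eqref{eq:splitbnd}, exactly as you anticipated.
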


\begin{proof}
Partition $\R^3$ into cubes
$Q_{\bm{k}}=\bm{k}+[-\tfrac12,\tfrac12)^3$.
For any $\bx\in Q_{\bm{k}}$ one has
$||\bx|-|\bm{k}||\le\sqrt{3}/2$.
Monotonicity of $f$ then yields the stated bound.
\end{proof}

\end{document}